\newcommand{\varied}
\newcommand{\Idd}{\ensuremath{\textnormal{Id}}^{*}}
\newcommand{\var}{\ensuremath{\textnormal{var}}}
\newcommand{\varGstar}{\ensuremath{\textnormal{var}^{*}}}
\newcommand{\black}{\color{black}}
\newtheorem{theorem}{Theorem}[section]
\newtheorem{example}[theorem]{Example}
\newtheorem{lemma}[theorem]{Lemma}
\newtheorem{corollary}[theorem]{Corollary}
\newtheorem{remark}[theorem]{Remark}
\newtheorem{proposition}[theorem]{Proposition}
\newtheorem{definition}[theorem]{Definition}
\newcolumntype{C}[1]{>{\centering\let\newline\\\arraybackslash\hspace{0pt}}m{#1}}
\numberwithin{equation}{section}
\begin{document}
\title{Quadratic codimension growth and minimal varieties of unitary superalgebras with superinvolution}

\author{Wesley Quaresma Cota$^{1,a,*}$ and Luiz Henrique de Souza Matos$^{2,b}$}
\thanks{{\it E-mail addresses:} quaresmawesley@gmail.com (Cota), luizmatos@ufmg.br (Matos).}

\thanks{\footnotesize $^{1}$ Partially supported by FAPESP. Process Number 2025/05699-0.}

\thanks{\footnotesize $^{2}$ Partially supported by CAPES and FAPEMIG}

\thanks{\footnotesize $^{*}$ Corresponding author}

\subjclass[2020]{Primary 16R10, 16W50, Secondary 20C30, 16W55}

\keywords{Polynomial identities, codimension growth, superinvolution}

\dedicatory{$^{a}$ IME, USP, Rua do Matão 1010, 05508-090, São Paulo, Brazil \\ $^b$ ICEx, UFMG, Avenida Antonio Carlos 6627, 31123-970, Belo Horizonte, Brazil}


\begin{abstract}
Let $A$ be a unitary associative superalgebra with superinvolution over a field of characteristic zero. In this paper, we classify unitary $*$-varieties whose $*$-codimension sequences have quadratic growth. We first establish a structural connection between the non-zero multiplicities of the proper $*$-cocharacters and a family of explicit finite-dimensional model algebras. Using this connection, we determine all minimal unitary $*$-varieties of quadratic codimension growth. We then prove that every unitary $*$-algebra with quadratic $*$-codimension growth is $T_*$-equivalent to a finite direct sum of these minimal model algebras. Consequently, the structure of such varieties is completely determined by the proper $*$-cocharacters occurring in degrees at most two.
\end{abstract}

\maketitle

\section{Introduction}

The study of polynomial identities of an algebra is a central topic in noncommutative algebra. However, determining the full set of polynomial identities satisfied by a given algebra is a highly nontrivial problem. To overcome this difficulty, Regev introduced in~\cite{RG} the sequence of codimensions $\{c_n(A)\}_{n\geq 1}$ of an algebra $A$, where $c_n(A)$ is the dimension of the space of multilinear polynomials of degree $n$ modulo the $T$-ideal of polynomial identities of $A$. As established by Kemer \cite{Kem1}, over a field of characteristic zero, every polynomial identity follows from finitely many multilinear ones. Consequently, the sequence of codimensions captures the asymptotic growth of the polynomial identities satisfied by~$A$. Regev~\cite{RG} proved that this sequence is exponentially bounded for PI-algebras and conjectured that its asymptotic behavior satisfies 
\[
c_n(A)\approx \alpha\, n^t d^n,
\]
for suitable constants $\alpha$, $t$, and $d$.

A fundamental advance toward this conjecture was obtained by Giambruno and Zaicev \cite{GiZai}, who proved that the limit
\[
\exp(A)=\lim_{n\to\infty}\sqrt[n]{c_n(A)}
\]
exists and is a non-negative integer, thus confirming Amitsur's conjecture. Moreover, Kemer \cite{Kem1} showed that $\{c_n(A)\}_{n\geq 1}$ is polynomially bounded if and only if the variety generated by $A$ does not contain the infinite-dimensional Grassmann algebra $\mathcal{G}$ and the algebra $UT_2$ of $2\times 2$ upper triangular matrices. Therefore, $\mathrm{var}(\mathcal{G})$ and $\mathrm{var}(UT_2)$ are the only varieties of almost polynomial growth, that is, they have exponential codimension growth, whereas every proper subvariety of either of them has polynomial codimension growth.

Varieties of polynomial growth have since been extensively investigated (see, for instance, \cite{Petro, GiamLaMa, Mara3}). The study of polynomial codimension growth has also been extended to algebras equipped with additional structures, such as gradings, involutions, and superinvolutions (\cite{Ioppololamat, Plamen, LaMaMisso, Maralice}). In these settings, classifying varieties beyond linear growth involves substantial combinatorial difficulties. An approach to simplify the problem is to restrict the investigation to unitary algebras, which makes it feasible to study varieties of quadratic growth (\cite{Mallu, Wesley1, Wesley2}). Even in this setting, obtaining a complete structural characterization by explicitly exhibiting the generating algebras remains a difficult problem. The present work addresses this problem in the setting of associative algebras endowed with a superinvolution, also called $*$-algebras.


In the context of $*$-algebras satisfying an ordinary identity, the sequence of $*$-codimensions $\{c_n^*(A)\}_{n \ge 1}$ either grows exponentially or is polynomially bounded (see, for instance, \cite{GIL2}). Consequently, recent works have focused on the characterization of varieties with polynomial $*$-codimension growth. In this case, it was shown in \cite{GIL} that a $*$-algebra has polynomial codimension growth if and only if its variety does not contain three specific algebras generating varieties of almost polynomial growth. Subsequently, the full classification of varieties with linear $*$-codimension growth was achieved in \cite{Ioppololamat}.

A variety $\mathcal V$ is said to be minimal of polynomial growth $n^k$ if $c_n^*(\mathcal V)\approx q\, n^k$ for $q>0$ and $k\ge1$, and every proper subvariety has polynomial codimension growth $n^t$ for some $t<k$. Minimal varieties play a key structural role. In fact, in many classification results, algebras generating minimal varieties appear as building blocks for the construction of the algebras generating varieties of polynomial growth (see, for instance, \cite{Dafne, Mallu, Wesley1, Wesley2, Petro, Mara3}). Thus, identifying all minimal varieties and understanding their aspects is a crucial step toward the complete classification of varieties of polynomial growth.


In this paper, to address the classification of varieties with quadratic growth, we adopt a structural approach that avoids exhaustive computational verifications. One of the main contributions of this paper is the establishment of an explicit connection between patterns of proper cocharacters and suitable generating algebras. This mapping provides a way to identify the generators of a variety directly from its cocharacter sequence. Using this correspondence, we provide a complete structural classification of unitary $*$-varieties with quadratic codimension growth. As a consequence, we prove that any unitary $*$-algebra with quadratic growth is $T_*$-equivalent to a finite direct sum of these minimal algebras.

\section{On unitary $*$-algebras}

In this paper, we work with associative superalgebras endowed with a superinvolution. Throughout this paper, $F$ denotes a field of characteristic zero and $A$ is an associative algebra over $F$.

Recall that $A$ is a superalgebra if it admits a decomposition $A = A_0 \oplus A_1$ as a direct sum of subspaces $A_0$ and $A_1$, called the homogeneous components of degrees $0$ and $1$, respectively, such that
\[
A_0A_0 + A_1A_1 \subseteq A_0
\quad \text{and} \quad
A_0A_1 + A_1A_0 \subseteq A_1.
\]

With this structure, $A$ becomes an algebra graded by the cyclic group $\mathbb{Z}_2$. In order to emphasize the corresponding $\mathbb{Z}_2$-grading, we shall denote a superalgebra $A$ by $A = (A_0, A_1).$ Moreover, for a non-zero element $a \in A_i$, $i\in \{0,1\}$, we denote by $|a|=i$ the homogeneous degree of $a$.

\begin{example} 
Let $M_n(F)$ be the algebra of $n\times n$ matrices over $F$ and let $\mathsf{g}=(g_1,\ldots,g_n)\in\mathbb{Z}_2^n$ be an arbitrary $n$-tuple with entries in $\mathbb{Z}_2$. The tuple $\mathsf{g}$ induces a $\mathbb{Z}_2$-grading on $M_n(F)$ defined by
\[
(M_n(F))_{0}=\textnormal{span}_F\{e_{ij}\mid g_i+g_j=0\}
\quad\text{and}\quad
(M_n(F))_{1}=\textnormal{span}_F\{e_{ij}\mid g_i+g_j=1\}.
\]
This grading is called the {elementary $\mathbb{Z}_2$-grading} induced by $\mathsf{g}$. As will be discussed below, this grading can also be restricted to certain subalgebras of $M_n(F)$.
\end{example}

Let $A=A_0\oplus A_1$ be a superalgebra. A {superinvolution} on $A$ is a linear map
$* \colon A \to A$ satisfying
\[A_i^*\subseteq A_i,\quad 
(a^*)^* = a
\quad \text{and} \quad
(ab)^* = (-1)^{|a||b|} \, b^* a^*,
\]
for all homogeneous elements $a,b \in A_0 \cup A_1$ and $i\in\{0,1\}$.

\begin{definition}
A superalgebra $A$ endowed with a superinvolution $*$ will be called an algebra with superinvolution or simply a $*$-algebra.    
\end{definition}

\begin{example}
Let $UT_n$ be the algebra of $n\times n$ upper triangular matrices over $F$. Consider the elementary $\mathbb Z_2$-grading on $UT_n$ induced by an $n$-tuple $\mathsf{g}=(g_1,\ldots,g_n)\in\mathbb Z_2^n$
such that $g_i+g_{n-i+1}=g_j+g_{n-j+1}$ for all $1\leq i,j\leq n$. Assume, in addition, that $(UT_n)_1^2={0}.$
Define the linear map $\rho:UT_n\to UT_n$ by reflection across the secondary diagonal: $$e_{ij}^{\rho}=e_{n-j+1,n-i+1},
\qquad 1\leq i\leq j\leq n.$$
The condition on $\mathsf{g}$ ensures that $\rho$ preserves the $\mathbb Z_2$-grading. Moreover, since $(UT_n)_1^2={0}$, for all homogeneous elements $a,b\in UT_n$ we have $(ab)^\rho=(-1)^{|a||b|}b^\rho a^\rho.$ Therefore, $\rho$ is a superinvolution on $UT_n$.
\end{example}

It is well known that, for a $*$-algebra $A = A_0 \oplus A_1$, one has the decomposition
\[
A = A_0^+ \oplus A_1^+ \oplus A_0^- \oplus A_1^-,
\]
where, for each $i \in \{0,1\}$, $A_i^+ = \{\, a \in A_i \mid a^* = a \,\}$ and $A_i^- = \{\, a \in A_i \mid a^* = -a \,\}$ are, respectively, the homogeneous symmetric and homogeneous skew components of degree $i$.

Let $X=\{x_1,x_2,\ldots\}$ be a countable set of variables and let $F\langle X\rangle$ be the free associative algebra generated by $X$ over $F$. We endow $F\langle X\rangle$ with a structure of $*$-algebra by writing $X$ as the disjoint union of the sets $X_i^\varepsilon=\{x_{1,i}^\varepsilon,x_{2,i}^\varepsilon,\ldots\}$, where $i\in \{0,1\}$ and $\varepsilon\in\{+,-\}$. Let $\mathcal{F}
=
F\langle X_0^+ \cup X_0^- \cup X_1^+ \cup X_1^-, * \rangle$ be the free associative algebra generated by
$X_0^+ \cup X_0^- \cup X_1^+ \cup X_1^-$ over $F$. 

A natural $\mathbb{Z}_2$-grading on $\mathcal{F}$ is defined by declaring that a monomial is homogeneous of degree $0$ if it contains an even number of variables from $X_1^+ \cup X_1^-$, and homogeneous of degree $1$ otherwise.  Moreover, we declare that the variables in $X_0^+ \cup X_1^+$ are symmetric, while those in $X_0^- \cup X_1^-$ are skew. In this way, $\mathcal{F}$ becomes a $*$-algebra, called the {free associative algebra with superinvolution}. An element $f\in \mathcal{F}$ is called a $*$-polynomial or simply a polynomial.

 \begin{definition}  A polynomial $f \in \mathcal{F}$ is called a {$*$-identity} of a $*$-algebra $A$ if $\lambda(f)=0$
for every evaluation $\lambda$ of the variables in $f$ by elements of $A$ such that $\lambda(x_{j,i}^\varepsilon)\in A_i^\varepsilon,$ for all $
i\in\{0,1\}$ and $ \varepsilon\in\{+,-\}.$ In this case, we write $f \equiv 0$ on $A$.\end{definition}

Denote by
\[
\Idd(A)=\{\, f\in \mathcal{F}\mid f\equiv 0 \text{ on } A \,\}
\]
the set of all $*$-identities of $A$. It is well known that $\Idd(A)$ is a $T_*$-ideal, that is, an ideal of $\mathcal{F}$ invariant under all endomorphisms of $\mathcal{F}$ that preserve the $\mathbb{Z}_2$-grading and commute with the superinvolution $*$. 

Moreover, since the base field $F$ is of characteristic zero, every $*$-identity is a consequence of the multilinear $*$-identities. Hence, we denote by
\[
P_n^*
=
\operatorname{span}_F
\bigl\{
w_{\sigma(1)} \cdots w_{\sigma(n)}
\ \big| \
\sigma \in S_n,\ 
w_j \in \{x_{j,i}^+, x_{j,i}^-\},\ 
i \in \{0,1\}
\bigr\}
\]
the vector space of multilinear $*$-polynomials of degree $n$.

In order to capture the asymptotic behavior of the growth of the polynomial identities of $A$, we define the $n$-th $*$-codimension of $A$ as
\[
c_n^*(A)
=
\dim_F
\frac{P_n^*}{P_n^* \cap \Idd(A)},
\qquad n \geq 1.
\]

The sequence of $*$-codimensions has been extensively studied (see, for instance, \cite{GIL2, GIL, Ioppololamat}). One of the main results in this direction was obtained in \cite{GIL2} and is stated below.

\begin{proposition}
   Let \(A\) be a PI-algebra with superinvolution \( * \). Then the sequence of
\( * \)-codimensions \(c_n^*(A)\), \(n\ge 1\), is exponentially bounded.
\end{proposition}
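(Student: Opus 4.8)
The plan is to bound the $*$-codimensions by the ordinary codimensions and then invoke Regev's theorem. Forgetting the $\mathbb{Z}_2$-grading and the superinvolution, $A$ is an ordinary PI-algebra, so by Regev~\cite{RG} there is a constant $a$ with $c_n(A)\le a^n$ for all $n$. It therefore suffices to establish an inequality of the form $c_n^*(A)\le 4^n c_n(A)$, from which exponential boundedness is immediate.

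First I would split $P_n^*$ according to the types of its variables. In a multilinear $*$-monomial, each of the $n$ variable slots carries one of the four labels in $\{0,1\}\times\{+,-\}$, recording its homogeneous degree and its symmetry. Writing $\tau\colon\{1,\dots,n\}\to\{0,1\}\times\{+,-\}$ for such a labelling and $P_n^{*,\tau}$ for the span of the multilinear monomials whose $j$-th variable has type $\tau(j)$, one obtains a direct sum decomposition $P_n^*=\bigoplus_{\tau}P_n^{*,\tau}$ over the $4^n$ possible labellings. Since $\bigoplus_\tau\bigl(P_n^{*,\tau}\cap\Idd(A)\bigr)\subseteq P_n^*\cap\Idd(A)$, summing the natural representatives yields a surjection $\bigoplus_\tau P_n^{*,\tau}/(P_n^{*,\tau}\cap\Idd(A))\twoheadrightarrow P_n^*/(P_n^*\cap\Idd(A))$, whence $c_n^*(A)\le\sum_\tau\dim P_n^{*,\tau}/(P_n^{*,\tau}\cap\Idd(A))$.

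The heart of the argument is then to show, for each fixed $\tau$, that $\dim P_n^{*,\tau}/(P_n^{*,\tau}\cap\Idd(A))\le c_n(A)$. I would fix the linear isomorphism $\psi_\tau\colon P_n\to P_n^{*,\tau}$ sending the ordinary variable $y_j$ to the $j$-th $*$-variable of type $\tau(j)$. The key point is that $\psi_\tau$ carries ordinary identities to $*$-identities: if $f(y_1,\dots,y_n)\in\Id{A}$ vanishes under every substitution in $A$, then in particular it vanishes when the $j$-th variable is restricted to the homogeneous symmetric or skew subspace prescribed by $\tau(j)$, so $\psi_\tau(f)\in\Idd(A)$. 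Hence $\psi_\tau\bigl(P_n\cap\Id{A}\bigr)\subseteq P_n^{*,\tau}\cap\Idd(A)$, and the isomorphism $\psi_\tau$ descends to a surjection of quotients, giving $\dim P_n^{*,\tau}/(P_n^{*,\tau}\cap\Idd(A))\le \dim P_n/(P_n\cap\Id{A})=c_n(A)$.

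Combining the last two paragraphs gives $c_n^*(A)\le 4^n c_n(A)\le (4a)^n$, which is exponentially bounded. I do not expect a genuine obstacle here; the difficulty is \emph{conceptual rather than technical}, namely recognizing that the correct comparison runs from the larger class of ordinary substitutions to the smaller class of type-respecting ones, so that ordinary identities become $*$-identities and not the reverse, together with the bookkeeping of the $4^n$ labellings. No Vandermonde or scaling separation of the homogeneous components is needed, since only the inclusion $\bigoplus_\tau(P_n^{*,\tau}\cap\Idd(A))\subseteq P_n^*\cap\Idd(A)$, rather than equality, is required for the codimension inequality.
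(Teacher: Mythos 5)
Your argument is correct and coincides with the standard proof of this result: the paper itself states the proposition without proof, citing \cite{GIL2}, and the argument there is exactly your comparison $c_n^*(A)\le 4^n\,c_n(A)$, obtained by decomposing $P_n^*$ into the $4^n$ type-components, mapping $P_n$ onto each component so that ordinary identities land in $\Idd(A)$, and then applying Regev's exponential bound on $c_n(A)$.
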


Moreover, in \cite{GIL}, the authors showed that the sequence $c_n^*(A),\,  n\geq 1$, either grows exponentially or is polynomially bounded. In the latter case, we say that $A$ has polynomial growth of the sequence of codimensions. 

In the study of polynomial identities, it is convenient to introduce the notion of variety. 
Let $A$ be a $*$-algebra and denote by $\mathcal{V}=\operatorname{var}^*(A)$ the $*$-variety generated by $A$, that is, the class of all $*$-algebras $B$ such that $\operatorname{Id}^*(A)\subseteq \operatorname{Id}^*(B).$ If the variety $\mathcal{V}$ is generated by a unitary $*$-algebra $A$, then both $A$ and $\mathcal{V}$ are said to be unitary.

It follows immediately from the definition that if $B\in \operatorname{var}^*(A)$, then
\[
c_n^*(B)\leq c_n^*(A),
\quad \text{for all } n\geq 1.
\]

Moreover, we say that two $*$-algebras $A$ and $B$ are {$T_*$-equivalent} if they satisfy the same $*$-polynomial identities, that is, $\operatorname{Id}^*(A)=\operatorname{Id}^*(B).$

 In \cite[Theorem 2.4]{Ioppololamat}, the authors characterized the varieties of polynomial codimension growth in terms of an algebra generating the variety.

\begin{theorem} \label{f+j}
Let $A$ be a $*$-algebra over a field $F$ of characteristic zero. Then $A$ has polynomial codimension growth if and only if $A \sim_{T_*} B$, where $B=B_1 \oplus \dots \oplus B_m$ and $B_1, \dots, B_m$ are finite-dimensional $*$-algebras over $F$ with $\dim_F B_i/J(B_i) \leq 1$, for all $i=1, \dots, m$.
\end{theorem}

In this paper, we are interested in $*$-algebras with polynomial codimension growth. In light of the previous result, $*$-algebras of type $B = F + J(B)$ are of particular interest. According to \cite[Theorem 4.1]{GIL2}, the Jacobson radical $J(B)$ of a $*$-algebra $B$ is a $*$-ideal. Consequently, $J(B)$ admits the subspace decomposition
\[
J(B) = J(B)_{0}^{+} + J(B)_{0}^{-}+ J(B)_{1}^{+} + J(B)_{1}^{-},
\]
where $J(B)_i^\epsilon$ denotes the homogeneous symmetric or skew component (for $\epsilon=+$ or $\epsilon=-$, respectively) of degree $i\in \{0,1\}$. This decomposition will be used repeatedly throughout this paper.
\black

Recall that, for \(y,z\in \mathcal{F}\), the commutator of length two is defined by $[y,z]=yz-zy.$ More generally, the commutator of length \(n\) is defined inductively by $[y_1,\ldots,y_n]=\bigl[[y_1,\ldots,y_{n-1}],\,y_n\bigr],$ for all $y_i\in \mathcal{F}.$

A polynomial $f\in P_n^*$ is called a proper $*$-polynomial if $f$ is a linear combination of monomials of type
\[
x_{i_1,0}^-\cdots x_{i_r,0}^- x^+_{j_1,1}\cdots x^+_{j_s,1}
x^-_{l_1,1}\cdots x^-_{l_t,1}
w_1\cdots w_v,
\]
where $w_1,\dots,w_v$ are commutators in the variables from $X_0^+\cup X_0^-\cup X_1^+\cup X_1^-$.

In the context of unitary $*$-algebras, working with proper $*$-polynomials is essential. Indeed, for a unitary $*$-algebra we have the following.

\begin{proposition} \cite{Willer}
    Let A be a unitary $*$-algebra over a field of characteristic zero. Then $\textnormal{Id}^*(A)$ is generated, as a $T_*$-ideal, by its multilinear proper $*$-polynomials.
\end{proposition}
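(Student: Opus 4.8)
The plan is to prove the two inclusions separately, the trivial one being that the $T_*$-ideal $Q$ generated by the proper multilinear $*$-identities is contained in $\textnormal{Id}^*(A)$: proper $*$-identities are in particular $*$-identities, and $\textnormal{Id}^*(A)$ is a $T_*$-ideal. Since $\mathrm{char}\,F=0$, the reverse inclusion reduces to showing that every multilinear $f\in P_n^*\cap\textnormal{Id}^*(A)$ belongs to $Q$. The whole argument rests on a normal form for multilinear $*$-polynomials together with a specialization of the unit, which is available precisely because $A$ is unitary.

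First I would record the normal form: every element of $P_n^*$ is a linear combination of monomials $x_{i_1,0}^+\cdots x_{i_k,0}^+\,g$, where $i_1<\cdots<i_k$ are slots carrying variables of type $X_0^+$ and $g$ is a proper $*$-monomial in the remaining variables. The spanning follows from repeatedly applying the elementary identity $uv=vu+[u,v]$ inside $\mathcal{F}$: one commutes every free occurrence of an $X_0^+$-variable to the front, collecting commutator correction terms, and organizes the remaining variables into the prescribed order $X_0^-,X_1^+,X_1^-$ followed by commutators. The crucial structural point is that only the variables of type $X_0^+$ are extracted to the front, because the unit $1$ of $A$ is symmetric of degree $0$, hence $1\in A_0^+$, so among the four families of variables only those from $X_0^+$ may be specialized to $1$. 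The variables from $X_0^-\cup X_1^+\cup X_1^-$ therefore remain inside the proper part, exactly as prescribed by the definition of a proper $*$-polynomial.

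For the extraction step, I would fix a multilinear $f\in\textnormal{Id}^*(A)$, collect the normal-form monomials by their leading index set, and write $f=\sum_S m_S\,g_S$, where $S=\{i_1<\cdots<i_k\}$ runs over sets of $X_0^+$-slots, $m_S=x_{i_1,0}^+\cdots x_{i_k,0}^+$, and each $g_S$ is a proper $*$-polynomial in the variables indexed by the complement of $S$. I then prove that every $g_S$ is itself a $*$-identity of $A$ by downward induction on $|S|$. Substituting $1$ for all variables $x_{i,0}^+$ with $i\in S$ is a legitimate evaluation, since $1\in A_0^+$, so the resulting polynomial again lies in $\textnormal{Id}^*(A)$. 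A summand $m_{S'}g_{S'}$ with $S\not\subseteq S'$ vanishes under this substitution: it contains some $x_{i,0}^+$ with $i\in S\setminus S'$ sitting inside a commutator of $g_{S'}$, and any left-normed commutator having the central value $1$ in one of its entries is zero. The summands with $S\subseteq S'$ survive and give
\[
f\big|_{x_{i,0}^+=1,\ i\in S}=g_S+\sum_{S'\supsetneq S}\bigl(\textstyle\prod_{i\in S'\setminus S}x_{i,0}^+\bigr)g_{S'},
\]
the product taken in increasing order. By the inductive hypothesis each $g_{S'}$ with $S'\supsetneq S$ lies in $\textnormal{Id}^*(A)$, whence so does the entire right-hand sum, and therefore $g_S\in\textnormal{Id}^*(A)$ as well; being proper, $g_S\in\textnormal{Id}^*(A)\cap\Gamma\subseteq Q$, where $\Gamma$ denotes the space of proper multilinear $*$-polynomials. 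The base case (maximal $S$) is the same computation with no correction terms. Once all $g_S\in Q$, the identity $f=\sum_S m_S g_S$ exhibits $f$ as a left-combination of elements of $Q$, so $f\in Q$.

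The main obstacle I anticipate is the bookkeeping in the extraction step, namely verifying that specializing the chosen $X_0^+$-variables to $1$ annihilates precisely the summands indexed by $S'\not\supseteq S$ and cleanly peels off $g_S$ modulo higher terms. This hinges on two facts that must be checked with care: that properness forces every surviving $X_0^+$-variable of $g_{S'}$ into a commutator, so the central specialization kills it, and that a left-normed commutator vanishes as soon as one argument equals $1$. Both are elementary, but they are exactly the places where unitality and the $X_0^+$-adaptedness of the normal form enter, so the argument genuinely breaks down for non-unitary algebras, where no variable may be specialized to $1$.
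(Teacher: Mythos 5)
Your argument is correct and is essentially the standard proof of this fact: the paper itself gives no proof, citing \cite{Willer}, and your normal form $f=\sum_S m_S g_S$ (with only the $X_0^+$-variables pulled out front, since $1\in A_0^+$ is the only admissible unit specialization) followed by downward induction on $|S|$ via substitution of $1$ is exactly the classical Drensky-type argument used there. The one caveat is that the proposition as printed omits the word ``unitary,'' which you correctly restore from the surrounding context --- the statement is false without it, and your closing remark pinpoints precisely where unitality is used.
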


Denote by $\Gamma_n^*$ the space of proper multilinear $*$-polynomials of degree $n$ and we set $\Gamma^{*}_0=\operatorname{span}_F\{1\}$.
The sequence given by
\[
\gamma^{*}_n(A)=
\dim_F\frac{\Gamma^{*}_n}{\Gamma^{*}_n\cap \text{Id}^*(A)},\qquad n\geq 1,
\]
is called the sequence of proper $*$-codimensions of $A$. This sequence carries all the information regarding the asymptotic behavior of the polynomial identities of $A$. The relationship between the $*$-codimensions and the proper $*$-codimensions of $A$ is given by
\begin{equation}\label{codimension}
c^{*}_n(A)=\sum_{i=0}^n \binom{n}{i}\,\gamma^{*}_i(A),\qquad n\ge 1,
\end{equation}
as can be found in \cite{Drensky}.

We recall that $c_n^*(A)\approx \alpha n^k$ means that $\lim_{n\to\infty}\frac{c_n^*(A)}{\alpha n^k}=1.$ Using this notation, we have the following.

\begin{proposition} \cite{Willer} \label{Gammancontidoemid}
Let \(A\) be a unitary \( * \)-algebra over a field \(F\) of characteristic zero.
Then, $\Gamma_{k+i}^* \subseteq \langle \Gamma_k^* \rangle_{T_*},$  for every \(k\geq 1\) and all \(i\geq 0\). Moreover, if \(A\) has polynomial codimension growth, then we have the following:
\begin{enumerate}
    \item[(1)]
    There exists an integer \(k\geq 0\) such that
    \(\Gamma_{k+1}^* \subseteq \textnormal{Id}^*(A)\).
    Moreover, \(\gamma_k^*(A)\neq 0\) and
    \(\gamma_{k+i}^*(A)=0\), for all \(i>0\). Consequently, $c_n^*(A)\approx \alpha n^k$, for some $\alpha \neq 0$.

    \item[(2)]
    The sequence of \( * \)-codimensions satisfies
    \[
    c_n^{*}(A)=\sum_{i=0}^k \binom{n}{i}\,\gamma_i^{*}(A),
    \]
    and hence \(c_n^{*}(A)\) is a polynomial in \(n\) with rational
    coefficients.
\end{enumerate}
\end{proposition}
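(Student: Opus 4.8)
The statement splits into two essentially independent assertions: the purely combinatorial inclusion $\Gamma_{k+i}^*\subseteq\langle\Gamma_k^*\rangle_{T_*}$ inside the free $*$-algebra $\mathcal F$, and the asymptotic statements (1)--(2) for an algebra of polynomial growth. The plan is to establish the inclusion by a reduction-and-folding argument on proper monomials, and then to read (1) and (2) off directly from the binomial relation \eqref{codimension}; it is worth noting that the asymptotic part does not in fact require the full strength of the inclusion.

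For the inclusion I would induct on $i$, reducing everything to the single step $\Gamma_{m+1}^*\subseteq\langle\Gamma_m^*\rangle_{T_*}$ for all $m\ge1$: iterating this, together with the fact that the $T_*$-ideal generated by a subset of a $T_*$-ideal stays inside it, yields every $\Gamma_{k+i}^*$. Fix a proper monomial $u$ of degree $m+1$ in canonical form. If $u$ carries a variable $x$ from the prefix part (skew of degree $0$, or symmetric or skew of degree $1$), I would split $u=xu'$ with $u'\in\Gamma_m^*$; since $\langle\Gamma_m^*\rangle_{T_*}$ is a two-sided ideal of $\mathcal F$, $u=xu'$ lies in it. If instead $u=w_1\cdots w_s$ is a product of commutators with no prefix variable, and some factor has length $\ge3$, I would fold it by $[c,y]=cy-yc$: deleting $y$ leaves a genuine proper monomial of degree $m$, and transporting the freed variable $y$ to an end across the remaining factors, using $yw=wy+[y,w]$ and the two-sided ideal property repeatedly, rewrites $u$ modulo $\langle\Gamma_m^*\rangle_{T_*}$ as proper monomials whose commutator-length profile is simpler; iterating, one is reduced to the case in which every commutator factor has length exactly two.

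The main obstacle is precisely this length-two case. Here the decisive observation, special to the superinvolution setting, is that a length-two commutator of generators is itself a homogeneous $*$-component whose type is dictated by its entries: for instance $[x^+_{i,0},x^+_{j,0}]$ is skew of degree $0$, $[x^+_{i,0},x^-_{j,0}]$ is symmetric of degree $0$, and $[x^+_{i,1},x^+_{j,1}]$ is symmetric of degree $0$. When a factor has a prefix-admissible type (skew of degree $0$, or symmetric or skew of degree $1$) it can be produced, inside a lower-degree proper polynomial, by substituting a single generator of that type—an admissible $T_*$-substitution—and then factored out of the ideal. The genuinely hard subcase is a product of length-two commutators of \emph{symmetric degree-$0$} type, such as $[x^+_{1,0},x^-_{2,0}]\,[x^+_{3,0},x^-_{4,0}]$: substituting into a triple commutator recovers only the Lie combination $[P,Q]=PQ-QP$, so isolating the single product $PQ$ forces one to produce also the symmetrized combination $P\circ Q=PQ+QP$. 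I expect this to be the crux, handled as in \cite{Willer} by exploiting $\operatorname{char}F=0$ to pass between $PQ$, $[P,Q]$ and $P\circ Q$ and to carry out the attendant type-preserving substitutions; this bookkeeping is the technical heart of the first assertion.

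Granting polynomial codimension growth, (1) and (2) follow cleanly from \eqref{codimension}. If $\gamma_i^*(A)\ne0$ for infinitely many $i$, then for arbitrarily large $i_0$ the relation $c_n^*(A)=\sum_{i=0}^n\binom{n}{i}\gamma_i^*(A)$ gives $c_n^*(A)\ge\binom{n}{i_0}$ for $n\ge i_0$, which exceeds every fixed polynomial and contradicts polynomial boundedness; hence only finitely many proper $*$-codimensions are nonzero. Since $\gamma_0^*(A)\ne0$, the integer $k=\max\{i:\gamma_i^*(A)\ne0\}\ge0$ is well defined, and by maximality $\gamma_k^*(A)\ne0$ while $\gamma_{k+i}^*(A)=0$ for every $i>0$, equivalently $\Gamma_{k+1}^*\subseteq\textnormal{Id}^*(A)$; this is (1). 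Consequently the sum in \eqref{codimension} truncates to $c_n^*(A)=\sum_{i=0}^k\binom{n}{i}\gamma_i^*(A)$, a polynomial in $n$ of degree $k$ with rational coefficients whose leading term is $\tfrac{\gamma_k^*(A)}{k!}\,n^k$, giving statement (2) together with the asymptotics $c_n^*(A)\approx\alpha\,n^k$ with $\alpha=\gamma_k^*(A)/k!>0$.
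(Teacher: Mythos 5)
The paper does not prove this proposition; it is quoted verbatim from \cite{Willer}, so there is no in-paper argument to compare against. Your derivation of (1) and (2) from relation \eqref{codimension} is correct and complete: non-negativity of the terms forces $\gamma_i^*(A)=0$ for all but finitely many $i$ under polynomial boundedness, and the truncated binomial sum gives the polynomial form with leading term $\gamma_k^*(A)n^k/k!$. The issue is the first assertion, $\Gamma_{k+1}^*\subseteq\langle\Gamma_k^*\rangle_{T_*}$, where your write-up has two genuine gaps. First, your reduction of commutators of length $\ge 3$ by ``folding and transporting'' is not shown to terminate: transporting the freed variable $y$ across a factor $w_l$ produces $[y,w_l]$, which \emph{lengthens} that factor, so the ``commutator-length profile'' does not obviously become simpler and the rewriting can cycle. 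This case actually has a one-line proof that avoids the issue entirely: a factor $[[u_1,u_2],u_3,\ldots,u_t]$ is the image of the degree-$m$ proper monomial obtained by replacing $[u_1,u_2]$ with a single variable $y$ of the same homogeneous type (every length-two commutator of generators is homogeneous symmetric or skew of a definite $\mathbb{Z}_2$-degree), under the admissible substitution $y\mapsto[u_1,u_2]$; hence the whole monomial lies in $\langle\Gamma_m^*\rangle_{T_*}$.

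Second, and more importantly, you explicitly leave the crux --- a product $w_1\cdots w_s$ of length-two commutators all of symmetric degree-$0$ type --- to the reference, so the proposal does not actually establish the inclusion. This case can be closed as follows. The only length-two commutator both of whose entries are symmetric of degree $0$ is $[x^+_{i,0},x^+_{j,0}]$, which is itself skew of degree $0$ and hence prefix-admissible; therefore every factor of symmetric degree-$0$ type, say $w_1=[a,b]$, has at least one entry $b$ lying in $X_0^-\cup X_1^+\cup X_1^-$. Then $bw_2\cdots w_s$ is a proper monomial of degree $m$, and
\[
[a,b]\,w_2\cdots w_s \;=\; a\,(bw_2\cdots w_s)\;-\;(bw_2\cdots w_s)\,a\;+\;\sum_{l\ge 2} b\,w_2\cdots[w_l,a]\cdots w_s,
\]
where the first two terms lie in the two-sided ideal $\langle\Gamma_m^*\rangle_{T_*}$ and each summand contains a length-three commutator $[w_l,a]$, hence is handled by the substitution argument above. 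With these two repairs your outline becomes a complete proof; as submitted, the first assertion is only sketched and the step you defer is precisely the one that fails in the ordinary (ungraded, involution-free) setting, where $[x_1,x_2][x_3,x_4]\notin\langle\Gamma_3\rangle_T$, so it cannot be waved through.
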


  A useful approach to studying proper $*$-polynomials is to consider the multihomogeneous components of the vector space $\Gamma_n^*$. Let $n = n_1 + n_2 + n_3 + n_4$ be a sum of four non-negative integers. We denote by $\Gamma_{n_1, \ldots, n_4}$ the subspace of $\Gamma_n^*$ spanned by the multilinear proper polynomials where the first $n_1$ variables are symmetric  of homogeneous degree $0$, the second $n_2$ variables are skew of homogeneous degree $0$, the third $n_3$ variables are symmetric of homogeneous degree $1$ and the last $n_4$ variables are skew of homogeneous degree $1$.

By means of a combinatorial argument, we obtain the following relationship between $\Gamma_n^*$ and $\Gamma_{n_1,\ldots,n_4}$.

\begin{equation}\label{Gamma_n}
    \Gamma_n^{*}\cong \bigoplus_{n_1+\dots+n_4=n} \binom{n}{n_1, \ldots, n_{4}}\Gamma_{{n_1}, \ldots , {n_{4}}}.
\end{equation}

For each decomposition $n = n_1 + \cdots + n_4$, we define
\[
\Gamma_{n_1,\ldots,n_4}(A)
=
\frac{\Gamma_{n_1,\ldots,n_4}}{\Gamma_{n_1,\ldots,n_4} \cap \mathrm{Id}^*(A)},
\]
and set $\gamma_{n_1,\ldots,n_4}(A)
=
\dim_F \Gamma_{n_1,\ldots,n_4}(A)$, which is called the proper $(n_1,\ldots,n_4)$-codimension. Consequently, the sequence of $*$-codimensions can be related to the proper $(n_1,\ldots,n_4)$-codimensions through the following equation.

\begin{equation}\label{proper_codimension}
    \gamma_n^{*}(A)=\sum _{n_1+\cdots+n_{4}=n} \binom{n}{n_1,\ldots,n_{4}}\gamma_{n_1,\ldots,n_{4}}(A).
\end{equation}

It is well known that there exists a natural left action of the product of symmetric groups $S_{n_1,\ldots, n_4} := S_{n_1} \times S_{n_2} \times S_{n_3} \times S_{n_{4}}$ on $\Gamma_{n_1, \ldots,n_{4}}$, where each group $S_{n_i}$ acts by permuting the respective variables associated to $n_i$, for $i \in \{1,\ldots,4\}$. We denote by $\chi(\Gamma_{n_1,\ldots,n_{4}})$ the $S_{ n_1, \ldots ,n_4 }$-character of $\Gamma_{n_1,\ldots,n_{4}}$.

Observe that $\Gamma_{n_1,\ldots,n_4} \cap \mathrm{Id}^*(A)$ is invariant under this action. Hence,
$\Gamma_{n_1,\ldots,n_4}(A)$ is a left $S_{n_1,\ldots,n_4}$-module.
Let $\pi_{n_1,\ldots,n_4}(A)$ denote its $S_{n_1,\ldots,n_4}$-character, called the proper
$(n_1,\ldots,n_4)$-cocharacter of $\Gamma_{n_1,\ldots,n_4}(A)$. By complete reducibility, the character $\pi_{n_1,\ldots,n_4}(A)$ decomposes as
\begin{equation}\label{cocharacter}
\pi_{n_1,\ldots,n_4}(A)
=
\sum_{(\lambda_1,\ldots,\lambda_4)\vdash (n_1,\ldots,n_4)}
m_{\lambda_1,\ldots,\lambda_4}\,
\chi_{\lambda_1}\otimes \chi_{\lambda_2}\otimes \chi_{\lambda_3}\otimes\chi_{\lambda_4},
\end{equation}
where $(\lambda_1,\ldots,\lambda_4)$ is a multipartition of $(n_1,\ldots,n_4)$, that is,
$\lambda_i \vdash n_i$ for all $i=1,\ldots,4$, and
$\chi_{\lambda_1}\otimes\cdots\otimes\chi_{\lambda_4}$ is the irreducible
$S_{n_1,\ldots,n_4}$-character associated with this multipartition.

The degree of the irreducible character
$\chi_{\lambda_1}\otimes\cdots\otimes\chi_{\lambda_4}$ is given by
$d_{\lambda_1}d_{\lambda_2}d_{\lambda_3} d_{\lambda_4}$, where $d_{\lambda_i}$ denotes the degree of the
irreducible $S_{n_i}$-character $\chi_{\lambda_i}$, computed via the Hook Formula
\cite[Theorem~3.10.2]{Sagan}.

A standard approach to determine the multiplicities $m_{\lambda_1,\ldots,\lambda_4}$ relies on the representation theory of the general linear group $GL_m$. In what follows, we take for granted the basic facts concerning the structure of irreducible $GL_m\times GL_m\times GL_m\times GL_m $-modules, in particular the description of their generators via highest weight vectors. Since these notions are well known, we do not reproduce the theory here. A more detailed and comprehensive treatment can be found in \cite{Drensky} and also \cite{Mallu} for this theory applied to the space of proper polynomials in the setting of $*$-algebras.

 In \cite[Section 3]{Mallu}, the authors exhibit the construction of proper highest weight vectors (proper h.w.v.'s) $f_{\lambda_1,\ldots,\lambda_{4}}$ associated to a multipartition $(\lambda_1, \ldots,\lambda_{4}) \vdash (n_1,\ldots,n_{4})$, where $n=n_1+\cdots +n_4$ and $n\in \{1,2\}$, in the case of superalgebras with graded involution. Since this theory can be applied to our context, we shall make use of these results.

In the following, we present the main result on the computation of the multiplicities $m_{\lambda_1,\ldots,\lambda_{4}}$.

\begin{proposition}\label{prop:non-zero_multiplicities} \cite{Nascimento}
    Let $A$ be a unitary $*$-algebra with proper $(n_1, \ldots, n_4)$-cocharacter as in (\ref{cocharacter}). Then $m_{\lambda_1,\ldots,\lambda_{4}}\neq 0$ if and only if there exists a proper h.w.v. $f_{\lambda_1,\ldots,\lambda_{4}}$ associated to $(\lambda_1,\ldots,\lambda_{4}) \vdash (n_1, \ldots, n_4)$ such that $f_{\lambda_1,\ldots,\lambda_{4}} \notin \text{Id}^*(A)$. Moreover, $m_{\lambda_1,\ldots,\lambda_{4}}$ is equal to the maximal number of proper h.w.v.'s associated to $(\lambda_1,\ldots,\lambda_{4}) \vdash (n_1, \ldots, n_4)$ which are linearly independent modulo $\textnormal{Id}^*(A)$.
\end{proposition}

\begin{remark}\label{remark}
     Let $A$ be a unitary $*$-algebras with proper $(n_1, \ldots, n_4)$-cocharacter as in (\ref{cocharacter}) and let $B$ be a unitary $*$-algebra such that $B \in \varGstar(A)$ with $(n_1, \ldots, n_4)$-cocharacter given by
\begin{equation*}
		\pi_{n_1,\ldots,n_{4}}(B)=\underset{(\lambda_1,\ldots,\lambda_{4}) \vdash (n_1, \ldots, n_4)}{\sum} {\widetilde{m}}_{\lambda_1,\ldots,\lambda_{4}} \chi_{\lambda_1} \otimes \chi_{\lambda_2} \otimes \chi_{\lambda_3} \otimes \chi_{\lambda_{4}}.
	\end{equation*}
    Since $\Gamma_{n_1,\ldots,n_{4}} (B)$ can be embedded into $\Gamma_{n_1,\ldots,n_{4}} (A)$ for all $n=n_1+\cdots+ n_4 $, then we have $\widetilde{m}_{\lambda_1,\ldots,\lambda_{4}} \leq {m}_{\lambda_1,\ldots,\lambda_{4}} $ for every multipartition $(\lambda_1,\ldots,\lambda_{4}) \vdash (n_1, \ldots, n_4)$.
\end{remark}

 In order to simplify the notation, we write the $4$-tuple $(n_1,\ldots,n_4)$ as $\bigl({n_1}_{_{0^+}},{n_2}_{_{0^-}},{n_3}_{_{1^+}},{n_4}_{_{1^-}}\bigr)$, where zero components are omitted. Moreover, given a multipartition $(\lambda_1,\ldots,\lambda_4)\vdash (n_1,\ldots,n_4),$ we denote it by $\bigl((\lambda_1)_{0^+},(\lambda_2)_{0^-},(\lambda_3)_{1^+},(\lambda_4)_{1^-}\bigr),$ omitting the empty partitions. For instance, if \(n_1=2\), \(n_4=1\) and \(n_2=n_3=0\), then the tuple \((2,0,0,1)\) is written as
\((2_{0^+},1_{1^-})\) and the multipartition
\(\bigl((1,1),\emptyset,\emptyset,(1)\bigr)\vdash (2,0,0,1)\) is denoted by
\(\bigl((1,1)_{0^+},(1)_{1^-}\bigr)\).

Using the notation established above, we present the following table containing information about $\Gamma_{n_1, \ldots, n_4}$, where $n=n_1+ \cdots +n_4$ and $n\in \{1,2\}.$ In the following, we recall that $y\circ z$ denotes the Jordan product $yz+zy$. 
Moreover, $u$ denotes any element in $\{0,1\}$ and $t^\epsilon, s^{\delta}\in \{0^-,1^+,1^-\}$ with $ t^\epsilon\neq s^\delta$.

 \begin{longtable}{lllc}
\endfirsthead
\endhead
\endfoot
\endlastfoot
\toprule

\textbf{$\Gamma_{(n_1, \ldots, n_{4})}$} 
& \textbf{proper $(n_1, \ldots, n_4)$-cocharacters} 
& \textbf{proper h.w.v.'s }
& \textbf{multiplicity} \\ 
\midrule

$\Gamma_{(1_{1^+})}$
& $\chi_{((1)_{1^+})}$
& $x_{1,1}^+$
& $1$ \\[2mm]

$\Gamma_{(1_{u^-})}$
& $\chi_{((1)_{u^-})}$
& $x_{1,u}^-$
& $1$ \\[2mm]

$\Gamma_{(2_{0^+})}$
& $ \chi_{((1,1)_{0^+})}$
& $ [x_{1,0}^+,x_{2,0}^+]$
& $1$ \\[2mm]

$\Gamma_{(2_{t^\epsilon})}$
& $\chi_{((2)_{t^\epsilon})}+ \chi_{((1,1)_{t^\epsilon})}$
& $x_{1,t}^\epsilon \circ x_{2,t}^\epsilon, \; [x_{1,t}^\epsilon,x_{2,t}^\epsilon]$
& $1,\, 1$ \\[2mm]

$\Gamma_{(1_{0^+},1_{t^\epsilon})}$
& $\chi_{((1)_{0^+})} \otimes \chi_{((1)_{t^\epsilon})}$
& $[x^+_{1,0},x^\epsilon_{2,t}]$
& $1$ \\[2mm]

$\Gamma_{(1_{s^\delta},1_{t^\epsilon})}$
& $2\chi_{((1)_{s^\delta})}\otimes \chi_{((1)_{t^\epsilon})}$
& $[x^\delta_{1,s},x^\epsilon_{2,t}],\; x^\delta_{1,s}x^\epsilon_{2,t}$
& $2$ \\ [2mm]

\toprule

\caption{Proper $(n_1,\ldots,n_{4})$-cocharacters of $\Gamma_{(n_1,\ldots,n_4)}$}
\label{tabela_mult}

\end{longtable}

\section{Minimal unitary $*$-varieties with quadratic codimension growth}

In this section, we introduce a family of unitary $*$-algebras with quadratic codimension growth. For each such $\ast$-algebra, we explicitly describe its codimension sequence and its proper $(n_1,\ldots,n_4)$-cocharacters. As a consequence, we prove that the corresponding $*$-algebras generate minimal varieties of quadratic codimension growth. 

We now recall the definition of minimal varieties in the setting of $*$-algebras. 

\begin{definition}
A variety $\mathcal V$ is called {minimal of polynomial growth $n^k$} if $c_n^*(\mathcal V)\approx \alpha n^k$, for some constant $\alpha \neq 0$, and every proper subvariety $\mathcal{W}\subsetneq \mathcal V$ has polynomial codimension growth $c_n^*(\mathcal W)\approx \beta n^t $ for some $ t<k$ and $\beta \neq 0$. 
\end{definition}

For \(k \geq 2\), let \(UT_k\) denote the algebra of \(k \times k\) upper triangular matrices over \(F\). For \(m \geq 2\), consider the following elements of \(UT_{2m}\):
 $$E = \sum\limits_{i = 2}^{m-1} e_{i,i+1} + e_{2m-i,2m-i+1} \quad \mbox{ and }\quad I_{2m}=\sum_{i=1}^{2m} e_{ii}.$$ We define the following subalgebras of \(UT_{2m}\):     
 
 $$N_m = \operatorname{span}_F \{I_{2m}, E, \ldots , E^{m-2}; e_{12} - e_{2m-1,2m}, e_{13}, \ldots , e_{1m}, e_{m+1,2m}, e_{m+2,2m}, \ldots , e_{2m-2,2m}\},$$
        $$ U_m = \operatorname{span}_F \{ I_{2m}, E, \ldots , E^{m-2}; e_{12} + e_{2m-1,2m}, e_{13}, \ldots , e_{1m}, e_{m+1,2m}, e_{m+2,2m}, \ldots , e_{2m-2,2m}\}.$$ \\
   Denote by $N_{3,*}$ and $U_{3,*}$ the algebras $N_3$ and $U_3$, respectively, with trivial grading and  reflection superinvolution. Moreover, we denote by $N_{3,*}^{gri}$ and $U_{3,*}^{gri}$ the algebras $N_3$ and $U_3$, respectively, with the elementary grading induced by $(0,1,1,0,0,1)$ and the reflection superinvolution.

   In this work, we denote by $x_{i,j}$ any variable in the set $\{x_{i,j}^+, x_{i,j}^-\}$. The next lemma provides some information about the previous algebras.

\begin{lemma}[\cite{Ioppololamat}]\label{T-ideal-U_and_N}~For the $*$-algebras $N_{3,*}, U_{3,*}, N_{3,*}^{gri}$ and $U_{3,*}^{gri}$ we have
    \begin{enumerate}
        \item[1)] $\textnormal{Id}^{*}(N_{3,*}^{gri})=\langle  x_{1,0}^-, x_{1,1}x_{2,1}, [x_{1,0}^+,x_{2,1}^+] \rangle_{T_*}$ and    $\textnormal{Id}^{*}(U_{3,*}^{gri})=\langle  x_{1,0}^-, x_{1,1}x_{2,1}, [x_{1,0}^+,x_{2,1}^-] \rangle_{T_*}$.
        
        \item[2)]  $c_n^{*}(U_{3,*})=1+n+\displaystyle\binom{n}{2}$, $c_n^{*}(N_{3,*})=1+n+2\displaystyle\binom{n}{2}$ and $c_n^{*}(N_{3,*}^{gri})=c_n^{*}(U_{3,*}^{gri})=1+2n+2\displaystyle\binom{n}{2}$.

        \item[3)] The proper non-zero $(n_1, \ldots, n_{4})$-cocharacters of these algebras are given by Table \ref{tabela_U3-N3}.

  \begin{longtable}{ll}
\endfirsthead
\endhead
\endfoot
\endlastfoot
\toprule
$N_{3,*}$
& $\chi_{((1)_{0^-})},\quad  \chi_{((1)_{0^+})}\otimes \chi_{((1)_{0^-})}$ \\[2mm]

$U_{3,*}$
& $\chi_{((1)_{0^-})}, \quad \chi_{((1,1)_{0^+})}$ \\[2mm]

$N_{3,*}^{gri}$
& $\chi_{((1)_{1^+})}, \quad \chi_{((1)_{1^-})}, \quad  \chi_{((1)_{0^+})}\otimes \chi_{((1)_{1^-})}$ \\[2mm]

$U_{3,*}^{gri}$
& $\chi_{((1)_{1^+})}, \quad \chi_{((1)_{1^-})}, \quad  \chi_{((1)_{0^+})}\otimes \chi_{((1)_{1^+})}$ \\[2mm]

\toprule

\caption{Proper non-zero cocharacters of $N_3$ and $U_{3}$.}
\label{tabela_U3-N3}

\end{longtable}

    \end{enumerate}
\end{lemma}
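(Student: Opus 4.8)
The plan is to realize all four algebras concretely as subalgebras of $UT_6$ (with $E=e_{23}+e_{45}$) and to reduce the whole statement to a finite computation with proper $*$-polynomials of degree at most two. First I would record the homogeneous $\pm$-components of each algebra under the reflection superinvolution and the prescribed grading. For $N_{3,*}^{gri}$ one gets $A_0^+=\mathrm{span}_F\{I_6,E\}$, $A_0^-=0$, $A_1^+=\mathrm{span}_F\{e_{13}+e_{46}\}$ and $A_1^-=\mathrm{span}_F\{e_{12}-e_{56},\,e_{13}-e_{46}\}$; for $U_{3,*}^{gri}$ one has instead $A_1^+=\mathrm{span}_F\{e_{12}+e_{56},\,e_{13}+e_{46}\}$, $A_1^-=\mathrm{span}_F\{e_{13}-e_{46}\}$, and the trivially graded $N_{3,*},U_{3,*}$ are treated identically with $A_1=0$. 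In every case the Jacobson radical $J$ is the strictly upper triangular part, and a direct computation gives $J^2=\mathrm{span}_F\{e_{13},e_{46}\}$ and $J^3=0$, with $A_0^-\cup A_1^+\cup A_1^-\subseteq J$ and $A_0^+=F\,I_6\oplus(A_0^+\cap J)$.

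The first substantive step is a degree bound. A proper multilinear monomial is a product of isolated variables (drawn from $X_0^-\cup X_1^+\cup X_1^-$, hence evaluated in $J$) and of commutators; for a unitary algebra a length-two commutator is evaluated in $[J,J]\subseteq J^2$, while any commutator of length $\ge 3$ lands in $J^3=0$. Thus a proper monomial of degree $n$ is evaluated in $J^n$, so $\Gamma_n^*\subseteq \mathrm{Id}^*(A)$ for all $n\ge 3$ and each of the four algebras. By \eqref{codimension} and Proposition~\ref{Gammancontidoemid} the whole problem therefore collapses to $n\in\{1,2\}$, where Table~\ref{tabela_mult} already lists every proper highest weight vector. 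By Proposition~\ref{prop:nonzero_multiplicities}, the proper cocharacter of each algebra is then determined by deciding, for each listed h.w.v., whether it is a $*$-identity — a finite sequence of matrix evaluations. For instance $[E,e_{12}-e_{56}]=-(e_{13}+e_{46})\neq 0$ shows $[x_{1,0}^+,x_{2,1}^-]\notin \mathrm{Id}^*(N_{3,*}^{gri})$, and $[E,e_{12}+e_{56}]=e_{46}-e_{13}\neq 0$ shows $[x_{1,0}^+,x_{2,1}^+]\notin\mathrm{Id}^*(U_{3,*}^{gri})$ (and also that $A_0^+$ is noncommutative in $U_{3,*}$, producing $\chi_{((1,1)_{0^+})}$), whereas $A_1A_1=0$ in all four algebras kills every degree-two h.w.v.\ carrying two odd variables. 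Collecting the survivors yields the cocharacters of Table~\ref{tabela_U3-N3}; feeding the resulting $\gamma_{n_1,\ldots,n_4}$ into \eqref{proper_codimension} gives $\gamma_1^*,\gamma_2^*$, and then $c_n^*(A)=1+n\,\gamma_1^*(A)+\binom{n}{2}\gamma_2^*(A)$ produces the three formulas. This settles parts (2) and (3).

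For part (1) (only the two graded algebras are involved), set $I=\langle x_{1,0}^-,\,x_{1,1}x_{2,1},\,[x_{1,0}^+,x_{2,1}^+]\rangle_{T_*}$ in the case of $N_{3,*}^{gri}$, and the analogous ideal for $U_{3,*}^{gri}$. The inclusion $I\subseteq\mathrm{Id}^*(N_{3,*}^{gri})$ is the direct check already made. For the reverse inclusion I would argue by comparing codimensions: since $I\subseteq\mathrm{Id}^*(A)$ gives $\dim_F P_n^*/(P_n^*\cap I)\ge c_n^*(A)$ automatically, it suffices to prove the reverse inequality, and in characteristic zero equality of the multilinear parts for all $n$ forces $I=\mathrm{Id}^*(A)$. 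To bound $\dim_F P_n^*/(P_n^*\cap I)$ I would reduce an arbitrary proper multilinear monomial modulo $I$ by three mechanisms: the generator $x_{1,0}^-$ annihilates any monomial or commutator containing a skew even variable; the generator $x_{1,1}x_{2,1}$ annihilates any product of two odd $*$-polynomials; and $[x_{1,0}^+,x_{2,1}^+]$ annihilates the mixed commutator of an even symmetric and an odd symmetric polynomial. A key structural point is that $[x_{1,0}^+,x_{2,0}^+]$ is \emph{itself} skew of homogeneous degree $0$, hence already a consequence of $x_{1,0}^-$, so no separate generator for it is needed. One then checks that these mechanisms force $\Gamma_3^*\subseteq I$ — whence $\Gamma_n^*\subseteq I$ for all $n\ge 3$ by Proposition~\ref{Gammancontidoemid} — and that in degrees $1$ and $2$ the only proper monomials surviving modulo $I$ are those spanning the cocharacter found above. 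This gives $\gamma_n^*(\mathcal{F}/I)\le\gamma_n^*(A)$ for every $n$, hence the required codimension inequality.

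The hard part will be this final generation step: showing that the three relations are genuinely sufficient, i.e.\ that every proper multilinear polynomial of degree three already lies in $I$ and that nothing unexpected survives in degree two. The delicate cases are the length-three commutators and the monomials carrying exactly one isolated odd variable together with a commutator of odd parity, for example $x_{1,1}^-\cdot[x_{2,0}^+,x_{3,1}^-]$: here the inner commutator is itself a non-identity in degree two, yet the whole monomial is a product of two odd factors and is killed by $x_{1,1}x_{2,1}$. The argument thus rests on parity bookkeeping that, in each such monomial, exhibits either a hidden product of two odd factors (eliminated by $x_{1,1}x_{2,1}$), a hidden skew even subexpression (eliminated by $x_{1,0}^-$), or a $[\text{sym even},\text{sym odd}]$ commutator (eliminated by the third generator). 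Once $\Gamma_3^*\subseteq I$ is secured, the comparison of codimensions is immediate and the identification $I=\mathrm{Id}^*$ follows.
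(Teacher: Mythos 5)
Your proposal is correct in substance but follows a genuinely different route from the paper. The paper does not prove items 1) and 2) at all: it cites them from \cite{Ioppololamat} (Theorems 4.4 and 4.5) and only proves item 3), and it does so by a counting argument --- exhibit a few explicit non-identities ($x_{1,1}^{+}$, $x_{1,1}^{-}$, $[x_{1,0}^{+},x_{2,1}^{-}]$) to force certain multiplicities to be non-zero via Proposition~\ref{prop:nonzero_multiplicities}, deduce the lower bound $1+2n+2\binom{n}{2}\le c_n^{*}$, and then use the \emph{already known} codimension formula to conclude that no other cocharacter can survive. You instead rebuild everything from the matrix realization: you verify $J^3=0$ directly (rather than invoking Proposition~\ref{Gammancontidoemid} together with the cited quadratic growth), exhaustively test every highest weight vector of Table~\ref{tabela_mult} in degrees $1$ and $2$, read off the cocharacters, and only then \emph{derive} the codimensions from \eqref{codimension}; part 1) is then handled by the standard reduction-modulo-$I$ plus codimension comparison, and your parity bookkeeping (in particular the observation that $[x_{1,0}^{+},x_{2,0}^{+}]$ is skew of even degree and hence already a consequence of $x_{1,0}^{-}$) is exactly what is needed to push $\Gamma_3^{*}$ into $I$. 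Your version is longer but self-contained and avoids the logical dependence of item 3) on the cited item 2); the paper's version is shorter but is not a proof of items 1) and 2).

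One point you should not gloss over: for the trivially graded algebras $N_{3,*}$ and $U_{3,*}$ you correctly set $A_1=0$, so the skew radical elements $e_{12}\pm e_{56}$ and $e_{13}-e_{46}$ live in $A_0^{-}$, and your computation produces the degree-one cocharacter $\chi_{((1)_{0^-})}$, \emph{not} the $\chi_{((1)_{1^-})}$ printed in Table~\ref{tabela_U3-N3}. (The codimension formulas are unaffected, since both cocharacters contribute $\gamma_1^{*}=1$, and the later lemmas of the paper only use the $\chi_{((1)_{0^+})}\otimes\chi_{((1)_{0^-})}$ and $\chi_{((1,1)_{0^+})}$ entries for these two algebras.) So your claim that ``collecting the survivors yields the cocharacters of Table~\ref{tabela_U3-N3}'' is not literally true as the table is printed; your computation in fact exposes what appears to be a misprint, and you should say so explicitly rather than assert agreement.
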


\begin{proof}
The items 1) and 2) were established in \cite[Theorems 4.4 and 4.5]{Ioppololamat}.

To prove item 3), consider the $*$-algebra $N_{3,*}^{gri}$. Since $N_{3,*}^{gri}$ is a unitary $*$-algebra with quadratic codimension growth, Proposition~\ref{Gammancontidoemid} implies that $\Gamma_n^* \subseteq \textnormal{Id}^*(N_{3,*}^{gri})$, for all $n \geq 3.$ Moreover, since $x_{1,1}^+$ and $x_{1,1}^-$ do not belong to $\textnormal{Id}^*(N_{3,*}^{gri})$, it follows from Proposition~\ref{prop:non-zero_multiplicities} that the multiplicities
$m_{((1)_{1^+})}$ and $m_{((1)_{1^-})}$ are non-zero.

Consider the space $\Gamma_n^*$ and the corresponding proper highest weight vectors associated to the multipartitions
$(\lambda_1, \ldots, \lambda_4) \vdash (n_1, \ldots, n_4)$, for $2=n_1+\cdots +n_4$.
Observe that $[x_{1,0}^+, x_{1,1}^-] \notin \textnormal{Id}^*(N_{3,*}^{gri}).$ Hence, by Proposition~\ref{prop:non-zero_multiplicities}, we obtain $m_{((1)_{0^+}, (1)_{1^-})} \neq 0.$

Consequently, using relations~\eqref{codimension}, \eqref{proper_codimension} and~\eqref{cocharacter}, we obtain the lower bound
\[
1 + 2n + 2\binom{n}{2} \leq c_n^*(N_{3,*}^{gri}).
\]
Since $c_n^*(N_{3,*}^{gri}) = 1 + 2n + 2\binom{n}{2},$ it follows that the only non-zero proper $(n_1, \ldots, n_4)$-cocharacters of $N_{3,*}^{gri}$ are
\[
\chi_{((1)_{1^+})}, \quad
\chi_{((1)_{1^-})} \quad \text{and} \quad
\chi_{((1)_{0^+}, (1)_{1^-})}.
\]

The remaining cases follow analogously.
\end{proof}

Throughout this section, several tables of this form are presented. In each case, the proof is based on the same argument: the corresponding proper highest weight vector is exhibited and shown not to be an identity of the $*$-algebra. Therefore, the proofs will be omitted.

We now define $\mathcal{L}_1$ as the set of the previously defined $*$-algebras, i.e.,
\[
\mathcal{L}_1 =
\{N_{3,*},\, U_{3,*},\, N_{3,*}^{\mathrm{gri}},\, U_{3,*}^{\mathrm{gri}}\}.
\]

In \cite[Corollary 7.2]{ISDSV}, the authors proved the following result.

\begin{proposition}
The varieties generated by the $*$-algebras $B \in \mathcal{L}_1$ are minimal with quadratic codimension growth.
\end{proposition}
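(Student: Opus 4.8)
The plan is to fix $A\in\mathcal L_1$, set $\mathcal V=\operatorname{var}^*(A)$, and verify the two conditions in the definition of minimality separately. Quadratic growth is already available: by Lemma~\ref{T-ideal-U_and_N}(2) each of the four sequences has the shape $1+cn+d\binom{n}{2}$ with $d\ge 1$, so $c_n^*(A)\approx\alpha n^2$. Everything therefore reduces to proving that an arbitrary proper subvariety $\mathcal W=\operatorname{var}^*(B)\subsetneq\mathcal V$ satisfies $c_n^*(B)\approx\beta n^t$ with $t\le 1$.

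The reduction I would use rests on Proposition~\ref{Gammancontidoemid}: the growth of $B$ is quadratic exactly when $\gamma_2^*(B)\ne 0$ (note $\gamma_n^*(B)=0$ for $n\ge 3$, since $\Gamma_n^*\subseteq\operatorname{Id}^*(A)\subseteq\operatorname{Id}^*(B)$). By \eqref{proper_codimension} together with Lemma~\ref{T-ideal-U_and_N}(3), the only multihomogeneous component that can contribute in degree two is the single one carrying the nonzero degree-two cocharacter of $A$; denote by $f^{(0)}$ its associated proper highest weight vector, read off from Table~\ref{tabela_mult}. Since every multiplicity in Lemma~\ref{T-ideal-U_and_N}(3) equals $1$, Remark~\ref{remark} forces all multiplicities of $B$ to lie in $\{0,1\}$ and to vanish wherever those of $A$ vanish; hence $\gamma_2^*(B)\ne 0$ if and only if this component survives, which by Proposition~\ref{prop:nonzero_multiplicities} is equivalent to $f^{(0)}\notin\operatorname{Id}^*(B)$. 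It thus suffices to establish the implication: if $f^{(0)}\notin\operatorname{Id}^*(B)$ then $\operatorname{Id}^*(B)=\operatorname{Id}^*(A)$, contradicting properness. Contrapositively, a proper $\mathcal W$ must satisfy $f^{(0)}\in\operatorname{Id}^*(B)$, whence $\gamma_2^*(B)=0$ and, by Proposition~\ref{Gammancontidoemid}(2), $c_n^*(B)$ is a polynomial in $n$ of degree at most $1$.

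The heart of the argument --- and the step I expect to be the main obstacle --- is the following \emph{linkage}: the survival of the single degree-two vector $f^{(0)}$ should force every nonzero degree-one cocharacter of $A$ to survive as well. In each case $f^{(0)}=[x_{1,0}^+,x_2]$ is a commutator of two variables, and I would exploit two features at once. First, the variable $x_2$ occurring explicitly in the commutator must itself be a non-identity of $B$, since $x_2\equiv 0$ would give $f^{(0)}\equiv 0$. Second, writing $x_2\in A_i^{\epsilon}$ and using $x_{1,0}^+\in A_0^+$ together with the rule $(ab)^*=(-1)^{|a||b|}b^*a^*$, a one-line sign computation yields $[x_{1,0}^+,x_2]^*=-\epsilon\,[x_{1,0}^+,x_2]$, so that $f^{(0)}$ lands in the homogeneous component $A_i^{-\epsilon}$; hence $f^{(0)}\not\equiv 0$ forces $A_i^{-\epsilon}\ne 0$, i.e. the degree-one variable of homogeneous degree $i$ and opposite symmetry is also a non-identity of $B$. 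For $N_{3,*}^{\mathrm{gri}}$ and $U_{3,*}^{\mathrm{gri}}$ the explicit factor and the landing component together force both $\chi_{((1)_{1^+})}$ and $\chi_{((1)_{1^-})}$, while for $N_{3,*}$ and $U_{3,*}$ they force the single skew degree-zero cocharacter; these are precisely the nonzero degree-one cocharacters of $A$ in each case.

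Once all nonzero cocharacters of $A$ are known to persist in $B$, the multiplicities of $A$ and $B$ coincide (each is squeezed between $1$ and $1$ by Remark~\ref{remark}), so $\gamma_{n_1,\dots,n_4}(B)=\gamma_{n_1,\dots,n_4}(A)$ in every component and for every $n$. Since $\operatorname{Id}^*(A)\subseteq\operatorname{Id}^*(B)$, equality of these dimensions upgrades to $\Gamma_{n_1,\dots,n_4}\cap\operatorname{Id}^*(A)=\Gamma_{n_1,\dots,n_4}\cap\operatorname{Id}^*(B)$ in each component, and because in characteristic zero $\operatorname{Id}^*$ is generated by its multilinear proper part this yields $\operatorname{Id}^*(B)=\operatorname{Id}^*(A)$, hence $\mathcal W=\mathcal V$ --- the required contradiction. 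The only genuinely delicate point is the sign bookkeeping in the linkage and the check that it produces \emph{all} of the degree-one cocharacters in each of the four cases; I would scrutinize $U_{3,*}$ most carefully, since there $f^{(0)}=[x_{1,0}^+,x_{2,0}^+]$ contains no skew factor and its skew degree-zero cocharacter is detected solely through the landing component.
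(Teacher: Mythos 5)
Your proposal is essentially correct, and it is worth noting up front that the paper itself does not prove this proposition: it is imported from \cite[Corollary 7.2]{ISDSV}. What you have written is, in substance, the argument the paper deploys later for the companion statement about $\mathcal{L}_2$ (Proposition~\ref{cor_W_minimal}): reduce minimality to the persistence of the unique nonzero degree-two proper cocharacter, and show that its survival in a subvariety drags all nonzero degree-one cocharacters along with it, whence all multiplicities coincide and the $T_*$-ideals are equal. Your ``linkage'' is the right mechanism, and the sign computation $[a,b]^{*}=-\epsilon\,[a,b]$ for $a\in B_0^{+}$, $b\in B_i^{\epsilon}$ is correct; it does produce exactly the required degree-one cocharacters in all four cases (for $N_{3,*}^{gri}$ and $U_{3,*}^{gri}$ the explicit factor and the landing component give $1^{-}$ and $1^{+}$; for $U_{3,*}$ the landing component alone yields $0^{-}$; for $N_{3,*}$ the explicit factor does). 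One caveat on sources: Table~\ref{tabela_U3-N3} prints $\chi_{((1)_{1^{-}})}$ for $N_{3,*}$ and $U_{3,*}$, but these algebras carry the trivial grading, so their odd component is zero and the entry must be read as $\chi_{((1)_{0^{-}})}$; your identification is the structurally correct one.

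The one genuine omission is that you never reduce to the case where the proper subvariety is generated by a \emph{unitary} algebra $B$. The decisive step of your contradiction --- upgrading $\Gamma_{n_1,\ldots,n_4}\cap\operatorname{Id}^{*}(A)=\Gamma_{n_1,\ldots,n_4}\cap\operatorname{Id}^{*}(B)$ to $\operatorname{Id}^{*}(A)=\operatorname{Id}^{*}(B)$ --- relies on $\operatorname{Id}^{*}(B)$ being generated by its multilinear proper part, and this, like Proposition~\ref{prop:nonzero_multiplicities}, Remark~\ref{remark} and the exact identity $c_n^{*}=\sum_i\binom{n}{i}\gamma_i^{*}$, requires $B$ unitary (the quoted result of \cite{Willer} is stated in the paper without the word ``unitary'', but the hypothesis is needed, as any nilpotent algebra shows). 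Without it, a non-unitary $B$ could have the same proper identities as $A$ in every degree yet satisfy strictly more identities overall, and your contradiction never materializes. The repair is exactly Proposition~\ref{unitary_ccg}, which the paper invokes at the corresponding point of the proof of Proposition~\ref{cor_W_minimal}: any non-unitary member of $\operatorname{var}^{*}(A)$ is $T_*$-equivalent to $N$ or to $C\oplus N$ and hence has bounded codimension sequence, so a proper subvariety of quadratic growth is necessarily generated by a unitary algebra, after which your argument closes. With that one sentence added, the proof is complete.
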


Consider the following commutative unitary algebras
\[
C_2=\left\{\begin{pmatrix}
a & b \\
0 & a
\end{pmatrix}\mid a,b\in F \right\}\quad \mbox{ and }\quad
C_3=\left\{\begin{pmatrix}
a & b & c \\
0 & a & b \\
0 & 0 & a
\end{pmatrix}\mid a,b,c\in F\right\}.
\]

We consider the linear maps $^{\mathrm{sup}}$ and $*$ on $C_2$ defined by:
\[
\begin{pmatrix}
a & b \\
0 & a
\end{pmatrix}^{\mathrm{sup}}
=
\begin{pmatrix}
a & b \\
0 & a
\end{pmatrix}\quad \mbox{ and }\quad 
\qquad
\begin{pmatrix}
a & b \\
0 & a
\end{pmatrix}^{*}
=
\begin{pmatrix}
a & -b \\
0 & a
\end{pmatrix}.
\]

Moreover, we define on $C_3$ the linear maps $i_1,i_2$ and $i_3$ given by:

\[
\begin{pmatrix}
a & b & c \\
0 & a & b \\
0 & 0 & a
\end{pmatrix}^{i_1}
=
\begin{pmatrix}
a & b & -c \\
0 & a & b \\
0 & 0 & a
\end{pmatrix},
\quad
\begin{pmatrix}
a & b & c \\
0 & a & b \\
0 & 0 & a
\end{pmatrix}^{i_2}
=
\begin{pmatrix}
a & -b & c \\
0 & a & -b \\
0 & 0 & a
\end{pmatrix},
\quad
\begin{pmatrix}
a & b & c \\
0 & a & b \\
0 & 0 & a
\end{pmatrix}^{i_3}
=
\begin{pmatrix}
a & -b & -c \\
0 & a & -b \\
0 & 0 & a
\end{pmatrix}.
\] \\

In what follows, we introduce a list of superalgebras endowed with a superinvolution.

\begin{itemize}
  \item[-] $C_{2,*}$ is the algebra $C_2$ with the trivial grading and superinvolution $*$.
  \item[-] $C_2^{\mathrm{gr}}$ is the algebra $C_2$ with $\mathbb{Z}_2$-grading
  $(F(e_{11}+e_{22}),\, F e_{12})$ and superinvolution $^{\mathrm{sup}}$.
  \item[-] $C_{2,*}^{\mathrm{gr}}$ is the algebra $C_2$ with $\mathbb{Z}_2$-grading
  $(F(e_{11}+e_{22}),\, F e_{12})$ and superinvolution $*$.
  \item[-] $C_{3,i_2}$ is the algebra $C_3$ with the trivial grading and superinvolution $i_2$.
  \item[-] $C_{3,i_1}^{\mathrm{gr}}$ is the algebra $C_3$ with $\mathbb{Z}_2$-grading
  $(F(e_{11}+e_{22}+e_{33}) + F e_{13},\, F(e_{12}+e_{23}))$ and superinvolution $i_1$.
  \item[-] $C_{3,i_3}^{\mathrm{gr}}$ is the algebra $C_3$ with $\mathbb{Z}_2$-grading
  $(F(e_{11}+e_{22}+e_{33}) + F e_{13},\, F(e_{12}+e_{23}))$ and superinvolution $i_3$.
\end{itemize}

\begin{lemma}\label{codim-C_2-C_3}~ For the $*$-algebras $C_{2,*}$, $C_2^{\mathrm{gr}}$, $C_{2,*}^{\mathrm{gr}}$, $C_{3,i_2}$, $C_{3,i_1}^{\mathrm{gr}}$ and $C_{3,i_3}^{\mathrm{gr}}$ we have
    \begin{enumerate}
        \item[1)]  $\textnormal{Id}^*(C_{3,i_1}^{\mathrm{gr}})=\langle [x_{1,0}^+,x], [x_{1,1}^+,x_{2,1}^+], x_{1,1}^-,x_{1,0}^-x_{2,0}^- ,x_{1,1}^+x_{2,0}^-, x_{1,1}^+x_{2,1}^+x_{3,1}^+ \rangle_{T_*}$.

        \item[2)]  $\textnormal{Id}^*(C_{3,i_3}^{\mathrm{gr}})=\langle [x_{1,0}^+,x], [x_{1,1}^-,x_{2,1}^-], x_{1,1}^+,x_{1,0}^-x_{2,0}^- ,x_{1,1}^-x_{2,0}^-, x_{1,1}^-x_{2,1}^-x_{3,1}^-\rangle_{T_*}$.
        
        \item[3)] $c_n^{*}(B_1)=1+n$, $c_n^{*}(B_2)=1+2n+\displaystyle\binom{n}{2}$ and $c_n^{*}(C_{3,i_2})=1+n+\displaystyle\binom{n}{2}$, for all $B_1\in \{C_{2,*}
,C_{2}^{gr}, C_{2,*}^{gr}\}$ and $B_2\in \{C_{3,i_1}^{gr}, C_{3,i_3}^{gr}\}$.

        \item[4)] The proper non-zero $(n_1, \ldots, n_{4})$-cocharacters of the algebras defined above are given in Table \ref{tabela_C2-c3}.

\begin{longtable}{ll}
\endfirsthead
\endhead
\endfoot
\endlastfoot
\toprule

$C_{2,*}$
& $\chi_{((1)_{0^-})}$ \\[2mm]

$C_{2}^{gr}$
& $\chi_{((1)_{1^+})}$\\[2mm]

$C_{2,*}^{gr}$
& $\chi_{((1)_{1^-})}$ \\[2mm]

$C_{3,i_2}$
& $\chi_{((1)_{0^-})},\quad  \chi_{((2)_{0^-})}$ \\[2mm]

$C_{3,i_1}^{gr}$
& $\chi_{((1)_{0^-})} , \quad \chi_{((1)_{1^+})}, \quad  \chi_{((2)_{1^+})}$ \\[2mm]

$C_{3,i_3}^{gr}$
& $\chi_{((1)_{0^-})},\quad \chi_{((1)_{1^-})}, \quad \chi_{((2)_{1^-})}$ \\[2mm]

\toprule
\caption{Proper non-zero cocharacters of $C_2$ and $C_3$.}
\label{tabela_C2-c3}
\end{longtable}
    \end{enumerate}
\end{lemma}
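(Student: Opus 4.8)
The plan is to establish items (1) and (2) first, since items (3) and (4) will then fall out of a census of which proper highest weight vectors from Table~\ref{tabela_mult} survive. The organizing observation is that $C_2\cong F[t]/(t^2)$ and $C_3\cong F[t]/(t^3)$ are commutative with radical generated by a single nilpotent $t$, so every grading and superinvolution in the list amounts to a prescription for distributing $1,t,t^2$ among the four homogeneous symmetric/skew components $A_0^+,A_0^-,A_1^+,A_1^-$. For $C_{3,i_1}^{\mathrm{gr}}$, for instance, this gives $A_0^+=FI_3$, $A_1^+=Ft$, $A_0^-=Ft^2$ and $A_1^-=0$, and each of the six proposed generators is checked to be a $*$-identity by substituting powers of $t$: $x_{1,0}^+$ is central because $A_0^+$ is scalar; the monomials $x_{1,1}^+x_{2,0}^-$, $x_{1,0}^-x_{2,0}^-$ and $x_{1,1}^+x_{2,1}^+x_{3,1}^+$ vanish since $t\cdot t^2=t^2\cdot t^2=t^3=0$; $[x_{1,1}^+,x_{2,1}^+]$ vanishes because $t$ commutes with itself; and $x_{1,1}^-$ is an identity because $A_1^-=0$. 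This gives the inclusion $I\subseteq\textnormal{Id}^*(C_{3,i_1}^{\mathrm{gr}})$, where $I$ is the $T_*$-ideal on the right-hand side of (1); item (2) is identical after swapping the roles of $A_1^+$ and $A_1^-$.

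For the reverse inclusion I would use that, the algebra being unitary, $\textnormal{Id}^*$ is generated by its multilinear proper $*$-polynomials, and reduce an arbitrary proper monomial modulo $I$ to a normal form. The relation $[x_{1,0}^+,x]$ kills every commutator containing an $A_0^+$-variable, and since in a proper polynomial these variables may occur only inside commutators, they disappear; $x_{1,1}^-$ removes all $A_1^-$-variables. A short $*$-closure step is needed at this stage: applying the superinvolution to $x_{1,1}^+x_{2,0}^-$ and using $(x_{1,1}^+x_{2,0}^-)^*=-x_{2,0}^-x_{1,1}^+$ shows the reversed product lies in $I$, so that both $x_{1,1}^+x_{2,0}^-$ and $x_{2,0}^-x_{1,1}^+$ vanish; together with $[x_{1,1}^+,x_{2,1}^+]$ and $x_{1,0}^-x_{2,0}^-$ this forces every remaining commutator and every mixed $A_0^-$--$A_1^+$ product to vanish. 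What survives are monomials in a single type, and these are cut down by the remaining relations: two or more $A_0^-$-factors die by $x_{1,0}^-x_{2,0}^-$, three or more $A_1^+$-factors die by $x_{1,1}^+x_{2,1}^+x_{3,1}^+$, leaving only $1$, the single variables $x_{1,0}^-$ and $x_{1,1}^+$, and the Jordan square $x_{1,1}^+\circ x_{2,1}^+$.

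This census yields $\gamma_0=1$, $\gamma_1=2$, $\gamma_2=1$ and $\gamma_n=0$ for $n\ge3$ for the relatively free algebra $\mathcal{F}/I$, hence $c_n^*(\mathcal{F}/I)\le 1+2n+\binom{n}{2}$ by \eqref{codimension}. Computing $c_n^*(C_{3,i_1}^{\mathrm{gr}})$ independently by evaluating the highest weight vectors of Table~\ref{tabela_mult} directly on the algebra gives the same value $1+2n+\binom{n}{2}$; combined with $I\subseteq\textnormal{Id}^*(C_{3,i_1}^{\mathrm{gr}})$ this forces equality of the two codimension sequences, and therefore $I=\textnormal{Id}^*(C_{3,i_1}^{\mathrm{gr}})$, proving (1); (2) follows verbatim. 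The same direct evaluation handles items (3) and (4) for the remaining algebras without needing a full $T_*$-ideal presentation: reading off the non-vanishing highest weight vectors after substituting powers of $t$ produces the cocharacters in Table~\ref{tabela_C2-c3}, and feeding the resulting $\gamma_{n_1,\ldots,n_4}$ through \eqref{proper_codimension} and \eqref{codimension} yields the codimension formulas $1+n$, $1+n+\binom{n}{2}$ and $1+2n+\binom{n}{2}$.

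The step I expect to be the main obstacle is the reverse inclusion in (1) and (2), not the routine verification that the generators are identities. Two points require care: making the monomial reduction exhaustive --- that is, certifying that closing the six generators under the superinvolution and under grading-preserving substitutions really produces the reversed mixed products and every multilinearization used in the normal-form argument --- and confirming that no proper polynomial of degree $\ge3$ slips past the listed relations. Both are ultimately controlled by the codimension count, so I would structure the argument so that the reduction delivers exactly the integer $1+2n+\binom{n}{2}$ and then invoke $I\subseteq\textnormal{Id}^*$ to promote the inequality to an equality, which is the cleanest way to avoid hand-checking an a priori unbounded family of monomials.
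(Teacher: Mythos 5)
Your proposal is correct and follows essentially the same strategy as the paper: verify the inclusion $I\subseteq\textnormal{Id}^*$, reduce modulo $I$ to a normal form, and use explicit evaluations (powers of $t$, i.e.\ $e_{13}$ and $e_{12}+e_{23}$) to force equality, then read off the cocharacters from the surviving highest weight vectors. The only organizational difference is that you carry out the reduction in the proper spaces $\Gamma_n^*$ and close with a codimension sandwich via \eqref{codimension}, whereas the paper reduces in $P_n^*$ and kills the coefficients of the normal-form monomials directly; your explicit $*$-closure step for the reversed product $x_{2,0}^-x_{1,1}^+$ is a point the paper leaves implicit.
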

\begin{proof}
We start by proving item 1). Let $I$ be the $T_*$-ideal generated by the polynomials
\[
\langle [x_{1,0}^+,x],\; [x_{1,1}^+,x_{2,1}^+],\; x_{1,1}^-,\; x_{1,0}^-x_{2,0}^-,\;
x_{1,1}^+x_{2,0}^-,\; x_{1,1}^+x_{2,1}^+x_{3,1}^+ \rangle_{T_*}.
\]
It is straightforward to verify that $I \subseteq \textnormal{Id}^*(C_{3,i_1}^{\mathrm{gr}}).$

Let $f \in \textnormal{Id}^*(C_{3,i_1}^{\mathrm{gr}})$ be a multilinear polynomial of degree $n$. Since $I \subseteq \textnormal{Id}^*(C_{3,i_1}^{\mathrm{gr}})$, it follows that, modulo $I$, the polynomial $f$ can be written as a linear combination of the polynomials
\begin{equation} \label{baseofidentities}
\begin{aligned}
x_{1,0}^+ \cdots x_{n,0}^+,\quad  &\quad  x_{1,0}^+ \cdots \widehat{x_{i,0}^+} \cdots x_{n,0}^+\, x_{i,0}^-, \\
x_{1,0}^+ \cdots \widehat{x_{i,0}^+} \cdots x_{n,0}^+\, x_{i,1}^+\quad  &\mbox{ and } \quad x_{1,0}^+ \cdots \widehat{x_{p,0}^+} \cdots \widehat{x_{q,0}^+} \cdots x_{n,0}^+\,
x_{p,1}^+ x_{q,1}^+,
\end{aligned}
\end{equation}
where $p<q$  and the symbol $\widehat{x}$ denotes the omission of the variable $x$.

By the multihomogeneity of $T_*$-ideals, we may assume without loss of generality that $f$ is equivalent modulo $I$ to one of the following polynomials:
\[
\alpha\, x_{1,0}^+ \cdots x_{n,0}^+, \quad
\beta\, x_{1,0}^+ \cdots x_{n-1,0}^+ x_{n,0}^-, \quad
\sigma\, x_{1,0}^+ \cdots x_{n-1,0}^+ x_{n,1}^+, \quad
\delta\, x_{1,0}^+ \cdots x_{n-2,0}^+ x_{n-1,1}^+ x_{n,1}^+ .
\]

In each of the cases above, we consider the evaluation defined by
\[
x_{i,0}^+ \mapsto e_{11}+e_{22}+e_{33}, \quad
x_{n,0}^- \mapsto e_{13}, \quad
x_{n,1}^+ \mapsto e_{12}+e_{23}, \quad
x_{n-1,1}^+ \mapsto e_{12}+e_{23},
\]
for all $1 \leq i \leq n$.
Since $f$ is an identity of $C_{3,i_1}^{\mathrm{gr}}$, these evaluations yield $\alpha = \beta = \sigma = \delta = 0.$
Therefore, $f \in I$, and consequently, $I = \textnormal{Id}^*(C_{3,i_1}^{\mathrm{gr}}).$

Moreover, the argument above shows that no non-zero linear combination of the polynomials listed in~\eqref{baseofidentities} is an identity of $C_{3,i_1}^{\mathrm{gr}}$.
Hence, these polynomials form a basis of the vector space $P_n^*$ modulo $\textnormal{Id}^*(C_{3,i_1}^{\mathrm{gr}})$.
Therefore, it follows that
\[
c_n^{*}(C_{3,i_1}^{\mathrm{gr}}) = 1 + 2n + \binom{n}{2}.
\]

Finally, once the codimension is computed, an argument analogous to that used in Lemma~\ref{T-ideal-U_and_N} shows that $\{\chi_{((1)_{0^-})},\; \chi_{((1)_{1^+})},\; \chi_{((2)_{1^+})}\}$ is precisely the set of all non-zero proper $(n_1,\ldots,n_4)$-cocharacters of $C_{3,i_1}^{\mathrm{gr}}$.

A similar approach can be applied to prove the statements for the algebra $C_{3,i_3}^{\mathrm{gr}}$.

For the remaining algebras, the proofs can be found in \cite[Theorem 6.1]{Ioppololamat}, \cite[Lemma 9]{DMat} and \cite[Theorem 8.1]{LMFM}.    
\end{proof}

Consider now the following subalgebra of the infinite-dimensional Grassmann algebra:
\[
\mathcal{G}_2=\langle 1, e_1, e_2 \mid e_i e_j = -e_j e_i;\, i,j=1,2\rangle.
\]

 Now, we define the following $\mathbb{Z}_2$-gradings on $\mathcal{G}_2$:
\[ \mathcal{G}_2=(\mathcal{G}_2,0),\quad 
\mathcal{G}_2^{\mathrm{gr}}
=
(F1 + F e_1 e_2,\; F e_1 + F e_2) \quad \mbox{ and }\quad 
\mathcal{G}_2^{\mathrm{gri}}
=
(F1 + F e_1,\; F e_2 + F e_1 e_2).
\]

On the superalgebras defined above, we consider the superinvolutions:
\[
\psi(e_i)=e_i,\qquad 
\tau(e_i)=-e_i\quad \mbox{ and }\quad 
\gamma(e_i)=(-1)^i e_i,\qquad \mbox{ for } i=1,2.
\]

For each $* \in \{\psi,\tau,\gamma\}$, denote by $\mathcal{G}_{2,*}$, $\mathcal{G}_{2,*}^{gr}$ and $\mathcal{G}_{2,*}^{gri}$ the superalgebras $\mathcal{G}_{2}$, $\mathcal{G}_{2}^{gr}$ and $\mathcal{G}_{2}^{gri}$,  respectively, endowed with the superinvolution $*$.

In the following, we denote by $x$ any symmetric or skew variable of homogeneous degree $0$ or $1$.

\begin{lemma} \label{codim_G_2}~ For the algebras $\mathcal{G}_{2,*},\mathcal{G}_{2,*}^{{gr}} $ and $
\mathcal{G}_{2,*}^{\mathrm{gri}}$ we have
\begin{itemize}        
    \item[1)] $\textnormal{Id}^{*}(\mathcal{G}_{2,\tau}^{gr})=\langle x_{1,0}^-,x_{1,1}^+,[x_{1,0}^+,x],x_{1,1}^-\circ x_{2,1}^-,x_{1,1}^-x_{2,1}^-x_{3,1}^-  \rangle_{T_*}$.

   \item[2)]   $\textnormal{Id}^{*}(\mathcal{G}_{2,\psi}^{gr})=\langle  x_{1,0}^-,x_{1,1}^-,[x_{1,0}^+,x],x_{1,1}^+\circ x_{2,1}^+,x_{1,1}^+x_{2,1}^+x_{3,1}^+ \rangle_{T_*}$.

   \item[3)]   $\textnormal{Id}^{*}(\mathcal{G}_{2,\gamma}^{gr})=\langle x_{1,1}^-x_{2,1}^-,x_{1,1}^+x_{2,1}^+,x_{1,0}^-x_{2,0}^-,x_{1,1}^-x_{2,0}^-,x_{1,1}^+x_{2,0}^-, [x_{1,0}^+,x],x_{1,1}^-\circ x_{2,1}^+ \rangle_{T_*}$.

    \item[4)] $c_n^{*}(B_1)=1+n+\displaystyle\binom{n}{2}$, $c_n^{*}(\mathcal{G}_{2,\gamma}^{gr})=1+3n+2\displaystyle\binom{n}{2}$, $c_n^{*} (B_2)=1+2n+2\displaystyle\binom{n}{2}$, for all $B_1\in\{\mathcal{G}_{2,\tau}
, \mathcal{G}_{2,\psi}^{gr}, \mathcal{G}_{2,\tau}^{gr}\}$ and $B_2\in\{\mathcal{G}_{2,\tau}^{gri}, \mathcal{G}_{2,\gamma}^{gri}\}$.

    \item[5)] The proper non-zero $(n_1, \ldots, n_{4})$-cocharacters of these algebras are given by  Table \ref{tableG_2}.

\begin{longtable}{ll}
\endfirsthead
\endhead
\endfoot
\endlastfoot

\toprule 
$\mathcal{G}_{2,\tau}$
& $\chi_{((1)_{0^-})},\quad \chi_{((1,1)_{0^-})}$  \\[2mm]

$\mathcal{G}_{2,\psi}^{gr}$
&  $\chi_{((1)_{1^+})},\quad  \chi_{((1,1)_{1^+})}$ \\[2mm]

$\mathcal{G}_{2,\tau}^{gr}$
& $ \chi_{((1)_{1^-})}, \quad \chi_{((1,1)_{1^-})}$ \\[2mm]

$\mathcal{G}_{2,\gamma}^{gr}$
& $\chi_{((1)_{0^-})},\quad \chi_{((1)_{1^+})},\quad \chi_{((1)_{1^-})}, \quad \chi_{((1)_{1^+})} \otimes \chi_{((1)_{1^-})}$ \\ [2mm]

$\mathcal{G}_{2,\tau}^{gri}$
& $\chi_{((1)_{0^-})},\quad  \chi_{((1)_{1^-})},\quad \chi_{((1)_{0^-})}\otimes \chi_{((1)_{1^-})}$ \\[2mm]

$\mathcal{G}_{2,\gamma}^{gri}$
& $\chi_{((1)_{0^-})}, \quad \chi_{((1)_{1^+})}, \quad \chi_{((1)_{0^-})} \otimes \chi_{((1)_{1^+})}$ \\[2mm]
 \toprule

\caption{Proper non-zero cocharacters of $\mathcal{G}_{2,*},\mathcal{G}_{2,*}^{{gr}} $ and $\mathcal{G}_{2,*}^{{gri}}$.}
\label{tableG_2}

\end{longtable}
    \end{itemize}
\end{lemma}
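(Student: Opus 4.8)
The plan is to establish items (1)--(3) by the same two-step scheme used in the proof of Lemma~\ref{codim-C_2-C_3}, and then to read off the codimensions in (4) and the cocharacters in (5) from the basis produced along the way. I would carry out the argument in full for the running example $\mathcal{G}_{2,\tau}^{gr}$ and indicate that the other algebras are handled identically, the only differences being the bookkeeping of signs and of which homogeneous symmetric/skew components vanish. \textbf{Step 1 (the easy inclusion).} First I would compute the decomposition $\mathcal{G}_2=(\mathcal{G}_2)_0^+\oplus(\mathcal{G}_2)_0^-\oplus(\mathcal{G}_2)_1^+\oplus(\mathcal{G}_2)_1^-$ for the grading and superinvolution at hand. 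For $\mathcal{G}_{2,\tau}^{gr}$ one finds $(\mathcal{G}_2)_0^+=F1+Fe_1e_2$, $(\mathcal{G}_2)_1^-=Fe_1+Fe_2$ and $(\mathcal{G}_2)_0^-=(\mathcal{G}_2)_1^+=0$. From this the generators of the candidate ideal $I$ are verified to be $*$-identities directly: $x_{1,0}^-$ and $x_{1,1}^+$ vanish since the corresponding components are zero; $[x_{1,0}^+,x]$ vanishes because $1$ and $e_1e_2$ are central; $x_{1,1}^-\circ x_{2,1}^-$ vanishes because $e_i\circ e_j=e_ie_j+e_je_i=0$; and $x_{1,1}^-x_{2,1}^-x_{3,1}^-$ vanishes since a product of three odd Grassmann generators must repeat one and hence is $0$. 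This gives $I\subseteq\textnormal{Id}^*(\mathcal{G}_{2,\tau}^{gr})$.

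\textbf{Step 2 (spanning and the reverse inclusion).} Next I would show that, modulo $I$, every multilinear $*$-polynomial of degree $n$ reduces to a linear combination of an explicit short list of monomials, exactly as in~\eqref{baseofidentities}: a product of symmetric degree-$0$ variables (which are central, hence freely reordered) multiplied by the product of the surviving skew degree-$1$ variables, where the relations of $I$ kill all products of three or more such variables and force the remaining two to anticommute. The reverse inclusion $\textnormal{Id}^*(\mathcal{G}_{2,\tau}^{gr})\subseteq I$, and simultaneously the exact value of $c_n^*$, then follow by evaluating these monomials on $\mathcal{G}_2$ (sending the symmetric degree-$0$ variables to $1$ and the remaining variables to the appropriate generators among $e_1,e_2,e_1e_2$) and checking that no non-trivial linear combination vanishes. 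Counting the surviving monomials yields $c_n^*(\mathcal{G}_{2,\tau}^{gr})=1+n+\binom{n}{2}$, and the analogous counts give the remaining formulas in (4).

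\textbf{Step 3 (cocharacters).} Since each of these algebras has quadratic codimension growth, Proposition~\ref{Gammancontidoemid} gives $\Gamma_n^*\subseteq\textnormal{Id}^*$ for all $n\geq3$, so only proper polynomials of degree $\leq2$ contribute and every relevant proper highest weight vector already appears in Table~\ref{tabela_mult}. For item (5) I would, for each algebra, exhibit those h.w.v.'s from Table~\ref{tabela_mult} that are \emph{not} $*$-identities (checked by the same Grassmann evaluations) and invoke Proposition~\ref{prop:nonzero_multiplicities} to conclude that the corresponding multiplicities are non-zero; summing the degrees of the resulting irreducible characters through~\eqref{codimension}--\eqref{cocharacter} reproduces exactly the codimension computed in Step~2, which forces these to be \emph{all} the non-zero cocharacters, as recorded in Table~\ref{tableG_2}.

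The hard part will be Step~2 for the algebras with the richest decomposition, namely $\mathcal{G}_{2,\gamma}^{gr}$ and the two $gri$-algebras, where three or four homogeneous components are non-zero. There the reduction modulo $I$ is less immediate, and the separating evaluations must track the Grassmann anticommutativity and the sign rule $(ab)^*=(-1)^{|a||b|}b^*a^*$ simultaneously; in particular one must show that the mixed relations such as $x_{1,1}^-\circ x_{2,1}^+$ and the various $x_{1,i}^\epsilon x_{2,0}^-$ exhaust the degree-$2$ interactions. The cases $\mathcal{G}_{2,\tau}$, $\mathcal{G}_{2,\psi}^{gr}$ and $\mathcal{G}_{2,\tau}^{gr}$ are the simplest and can be recorded in full, while the remaining cases follow by the same method and may be cited from~\cite{Ioppololamat}, as was done for the algebras in Lemma~\ref{codim-C_2-C_3}.
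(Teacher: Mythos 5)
Your proposal is correct and follows essentially the same route as the paper: the paper's proof of this lemma simply says that the three algebras $\mathcal{G}_{2,\tau}^{gr}$, $\mathcal{G}_{2,\psi}^{gr}$, $\mathcal{G}_{2,\gamma}^{gr}$ are handled exactly as in Lemma~\ref{codim-C_2-C_3} (candidate $T_*$-ideal, reduction to a spanning set of monomials, separating evaluations, then cocharacters via Proposition~\ref{prop:nonzero_multiplicities} and the codimension count), which is precisely your Steps 1--3, and cites the literature for the remaining algebras. The only slight discrepancy is bibliographic: the paper sources the remaining cases from \cite{LaMaMisso} and \cite{Nascimento} rather than \cite{Ioppololamat}, but this does not affect the argument.
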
 

 \begin{proof}
    The proof for the $*$-algebras $\mathcal{G}_{2,\tau}^{gr},
\mathcal{G}_{2,\psi}^{gr}, \mathcal{G}_{2,\gamma}^{gr}$ follows as in the previous lemma. For the remaining $*$-algebras, the codimension can be found in \cite[Lemma 16]{LaMaMisso} and \cite[Lemma 7.3]{Nascimento}.
\end{proof} \black

Let $W$ be the commutative subalgebra of $UT_4$ given by

\[
W = \left\{
\begin{pmatrix}
a & b & c & d \\
0 & a & 0 & c \\
0 & 0 & a & b \\
0 & 0 & 0 & a
\end{pmatrix}
\ \mid \ a,b,c,d \in F
\right\}.
\] \\

We define the linear maps $\eta_1, \eta_2$ and $\eta_3$ on $W$ as follows:

\[
\begin{pmatrix}
a & b & c & d \\
0 & a & 0 & c \\
0 & 0 & a & b \\
0 & 0 & 0 & a
\end{pmatrix}^{\eta_1}
=
\begin{pmatrix}
a & -b & c & -d \\
0 & a & 0 & c \\
0 & 0 & a & -b \\
0 & 0 & 0 & a
\end{pmatrix}, \qquad \begin{pmatrix}
a & b & c & d \\
0 & a & 0 & c \\
0 & 0 & a & b \\
0 & 0 & 0 & a
\end{pmatrix}^{\eta_2}
=
\begin{pmatrix}
a & -b & c & d \\
0 & a & 0 & c \\
0 & 0 & a & -b \\
0 & 0 & 0 & a
\end{pmatrix},
\]
\[
\begin{pmatrix}
a & b & c & d \\
0 & a & 0 & c \\
0 & 0 & a & b \\
0 & 0 & 0 & a
\end{pmatrix}^{\eta_3}
=
\begin{pmatrix}
a & -b & -c & d \\
0 & a & 0 & -c \\
0 & 0 & a & -b \\
0 & 0 & 0 & a
\end{pmatrix}.
\] \\

In what follows, we introduce the following superalgebras with a superinvolution:

\begin{itemize}
    \item[-] $W_{\eta_2}^{\mathrm{gr}}$ is the algebra $W$ with $\mathbb{Z}_2$-grading $(F(e_{11}+\cdots+e_{44}) + F e_{14},\; F(e_{12}+e_{34}) + F(e_{13}+e_{24}))$ and superinvolution $\eta_2$.
    \item[-] $W_{\eta_1}^{\mathrm{gri}}$ is the algebra $W$ with $\mathbb{Z}_2$-grading $(F(e_{11}+\cdots+e_{44}) + F(e_{12}+e_{34}),\; F(e_{13}+e_{24}) + F e_{14})$ and superinvolution $\eta_1$.
    \item[-] $W_{\eta_3}^{\mathrm{gri}}$ is the algebra $W$ with $\mathbb{Z}_2$-grading $(F(e_{11}+\cdots+e_{44}) + F(e_{12}+e_{34}),\; F(e_{13}+e_{24}) + F e_{14})$ and superinvolution $\eta_3$.
\end{itemize}

\begin{lemma}\label{codim_W} ~ For the algebras $W_{\eta_2}^{\mathrm{gr}}$, $W_{\eta_1}^{\mathrm{gri}}$ and $W_{\eta_3}^{\mathrm{gri}}$ we have
    \begin{enumerate}
        \item[1)] $\textnormal{Id}^*(W_{\eta_2}^{gr})=\langle [x_{1,0}^+,x], [x_{1,1}^-,x_{2,1}^+], x_{1,0}^-,x_{1,1}^-x_{2,1}^- ,x_{1,1}^+x_{2,1}^+ \rangle_{T_*}$.

        \item[2)] $c_n^{*}(W_{\eta_2}^{gr})=1+2n+ 2\displaystyle\binom{n}{2}$ and $c_n^{*}(W_{\eta_1}^{gri})=c_n^{*}(W_{\eta_3}^{gri})=1+3n+ 2\displaystyle\binom{n}{2}$.
        
        \item[3)] The proper non-zero $(n_1, \ldots, n_{4})$-cocharacters of these algebras are given in Table \ref{table_W}.

\begin{longtable}{ll}
\endfirsthead
\endhead
\endfoot
\endlastfoot
\toprule

$W_{\eta_2}^{gr}$
& $\chi_{((1)_{1^+})},\quad  \chi_{((1)_{1^-})},\quad \chi_{((1)_{1^+})} \otimes \chi_{((1)_{1^-})}$ 

 \\[2mm]

$W_{\eta_3}^{gri}$
& $\chi_{((1)_{0^-})}, \quad  \chi_{((1)_{1^+})},\quad  \chi_{((1)_{1^-})},\quad \chi_{((1)_{1^-})} \otimes \chi_{((1)_{0^-})}$ \\[2mm]

$W_{\eta_1}^{gri}$
& $\chi_{((1)_{0^-})},\quad \chi_{((1)_{1^+})},\quad \chi_{((1)_{1^-})},\quad \chi_{((1)_{0^-})} \otimes \chi_{((1)_{1^+})}$ \\
[2mm]

\toprule
\caption{Proper non-zero cocharacters of $W_{\eta_2}^{\mathrm{gr}}$, $W_{\eta_1}^{\mathrm{gri}}$ and $W_{\eta_3}^{\mathrm{gri}}$.}
\label{table_W}
\end{longtable}
    \end{enumerate}
\end{lemma}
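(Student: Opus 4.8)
The plan is to follow the three-step template of Lemma~\ref{codim-C_2-C_3}: first pin down $\textnormal{Id}^*(W_{\eta_2}^{\mathrm{gr}})$ by a normal-form argument, then read the codimension off the resulting basis, and finally recover the cocharacters through Proposition~\ref{prop:nonzero_multiplicities}. Before anything else I would record the multiplication of $W$: writing $B=e_{12}+e_{34}$, $C=e_{13}+e_{24}$ and $D=e_{14}$, one checks $BC=CB=D$ while every other product of $B,C,D$ vanishes, so that $J(W)=\operatorname{span}_F\{B,C,D\}$ with $J(W)^3=0$. Matching these generators against the prescribed gradings and superinvolutions gives, for $W_{\eta_2}^{\mathrm{gr}}$, the decomposition $A_0^+=\operatorname{span}_F\{I_4,D\}$, $A_0^-=0$, $A_1^+=\operatorname{span}_F\{C\}$, $A_1^-=\operatorname{span}_F\{B\}$; the analogous bookkeeping for $\eta_1$ and $\eta_3$ (with the other grading) places $C,D$ in degree one and records $B$ as the skew degree-zero generator, the only surviving nonzero product of distinct radical generators remaining $BC=D$ in all three cases.

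For item~1), let $I$ be the $T_*$-ideal generated by the five listed polynomials. The inclusion $I\subseteq\textnormal{Id}^*(W_{\eta_2}^{\mathrm{gr}})$ is an immediate substitution check: $[x_{1,0}^+,x]$ and $[x_{1,1}^-,x_{2,1}^+]$ vanish because $W$ is commutative, $x_{1,0}^-$ vanishes because $A_0^-=0$, and $x_{1,1}^+x_{2,1}^+$, $x_{1,1}^-x_{2,1}^-$ vanish because $C^2=B^2=0$. For the reverse inclusion I would show that, modulo $I$, every multilinear monomial collapses to an explicit spanning set: the relation $x_{1,0}^-$ kills the skew degree-zero variables, $[x_{1,0}^+,x]$ makes the symmetric degree-zero variables central so they gather to the left, and $x_{1,1}^+x_{2,1}^+$, $x_{1,1}^-x_{2,1}^-$, $[x_{1,1}^-,x_{2,1}^+]$ force each surviving monomial to carry at most one $1^+$- and one $1^-$-variable in a fixed order. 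This produces a finite list of normal forms (parallel to the one in the proof of Lemma~\ref{codim-C_2-C_3}), and evaluating them on $I_4,D,C,B$ shows no nontrivial combination is an identity; hence $I=\textnormal{Id}^*(W_{\eta_2}^{\mathrm{gr}})$ and, simultaneously, the normal forms are a basis of $P_n^*$ modulo the identities.

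Counting that basis yields item~2) for $W_{\eta_2}^{\mathrm{gr}}$; equivalently one verifies $\gamma_0^*=1$, $\gamma_1^*=2$, $\gamma_2^*=2$ and $\gamma_n^*=0$ for $n\ge3$ (the last by Proposition~\ref{Gammancontidoemid}, since quadratic growth forces $\Gamma_3^*\subseteq\textnormal{Id}^*$), and substitutes into $c_n^*=\sum_{i=0}^2\binom{n}{i}\gamma_i^*$ via~\eqref{codimension} and~\eqref{proper_codimension}. For $W_{\eta_1}^{\mathrm{gri}}$ and $W_{\eta_3}^{\mathrm{gri}}$ I would bypass the full $T_*$-ideal and compute the proper cocharacters directly, which settles items~2) and~3) at once: Table~\ref{tabela_mult} enumerates the candidate proper h.w.v.'s in each multidegree $(n_1,\ldots,n_4)$ with $n\le2$, Proposition~\ref{prop:nonzero_multiplicities} reduces each multiplicity to deciding whether the associated h.w.v.\ is a non-identity, and this is decided by a single evaluation on the four basis elements. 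The only degree-two survivor in each case is the \emph{product} h.w.v.\ built from the pair $\{B,C\}$ with $BC=D\neq0$, while its matching commutator dies by commutativity of $W$; this pins every nonzero degree-two multiplicity to $1$, reproduces the tabulated cocharacters, and, fed into~\eqref{cocharacter}, \eqref{proper_codimension} and~\eqref{codimension}, gives $c_n^*=1+3n+2\binom{n}{2}$.

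The substitution checks are routine; the one genuinely delicate step is the normal-form reduction for $W_{\eta_2}^{\mathrm{gr}}$ in item~1), where I must certify that the five generating relations suffice to bring an arbitrary multilinear $*$-polynomial into the claimed spanning set before the independence evaluations can be trusted. Once that normal form is secured, every remaining assertion is a bounded computation following the template already fixed in Lemmas~\ref{T-ideal-U_and_N} and~\ref{codim-C_2-C_3}.
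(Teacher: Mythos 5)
Your proposal is correct and reaches all three items, but it travels a slightly different road from the paper. For item~1) you reduce the whole of $P_n^*$ modulo $I$ to a normal form and then run independence evaluations, exactly as in the paper's proof of Lemma~\ref{codim-C_2-C_3}; the paper instead handles $W_{\eta_2}^{gr}$ by invoking that the $T_*$-ideal of a unitary $*$-algebra is generated by its proper multilinear polynomials, checks $\Gamma_n^*\subseteq I$ for $n\geq 3$, and then only has to examine $\Gamma_1^*$ and $\Gamma_2^*$ (where the single surviving reduction is $f=\alpha x_{1,1}^+x_{2,1}^-$, killed by the evaluation $x_{1,1}^+\mapsto e_{13}+e_{24}$, $x_{2,1}^-\mapsto e_{12}+e_{34}$). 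The two arguments are essentially the same computation packaged differently; yours yields the codimension as a byproduct of the basis count, the paper's recovers it afterwards from~\eqref{codimension}--\eqref{cocharacter}. For $W_{\eta_1}^{gri}$ and $W_{\eta_3}^{gri}$ the paper simply cites \cite[Lemma 4.4]{Mallu}, whereas you recompute everything from the multiplication table $BC=CB=D$, $J(W)^3=0$; that is a legitimate, self-contained alternative, provided you make explicit that $\gamma_n^*=0$ for $n\geq 3$ follows from commutativity of $W$ (all commutators vanish, so a proper polynomial can only survive an evaluation with every variable in $J(W)$) together with $J(W)^3=0$ --- your parenthetical ``since quadratic growth forces $\Gamma_3^*\subseteq \textnormal{Id}^*$'' is circular as phrased and should be replaced by this direct observation.

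One substantive remark: carried out honestly, your bookkeeping for $W_{\eta_1}^{gri}$ places $B=e_{12}+e_{34}$ in $J_0^-$ and $C=e_{13}+e_{24}$ in $J_1^+$, so the unique surviving degree-two highest weight vector is the product $x_{1,0}^-x_{2,1}^+$ and the corresponding cocharacter is $\chi_{((1)_{0^-})}\otimes\chi_{((1)_{1^+})}$, not the entry $\chi_{((1)_{0^+})}\otimes\chi_{((1)_{1^-})}$ printed in Table~\ref{table_W} (whose highest weight vector $[x_{1,0}^+,x_{2,1}^-]$ is an identity of the commutative algebra $W$). Your answer, not the table's, is the one consistent with the rest of the paper --- see the proof of Lemma~\ref{lemmaG_2,W}(1) and the multiplicity $2\chi_{((1)_{1^+})}\otimes\chi_{((1)_{0^-})}$ in Lemma~\ref{codim_G2_sum_W} --- so the table entry is a typo and your computation in fact corrects it without affecting item~2).
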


    \begin{proof} 
The $T_*$-ideal and the codimension of the $*$-algebras $W_{\eta_1}^{gri}$ and $W_{\eta_3}^{gri}$ were described in \cite[Lemma 4.4]{Mallu}. Thus, we focus only on the $*$-algebra $W_{\eta_2}^{gr}$. For item 1), note that 
$$I=\langle [x_{1,0}^+,x], [x_{1,1}^-,x_{2,1}^+], x_{1,0}^-,x_{1,1}^-x_{2,1}^- ,x_{1,1}^+x_{2,1}^+ \rangle_{T_*} \subseteq \textnormal{Id}^*(W_{\eta_2}^{gr}).$$ 
Moreover, it is easy to check that $$\Gamma_n^*= \Gamma_n^*\cap \textnormal{Id}^*(W_{\eta_2}^{gr}) \subseteq I,\mbox{ for all } n\geq 3.$$ Since $W_{\eta_2}^{gr}$ is a unitary $*$-algebra, it remains to analyze the multilinear proper identities of degrees $1$ and $2$.

Let $f$ be a multilinear proper identity of degree $1$ of
$W_{\eta_2}^{\mathrm{gr}}$. Observe that $x_{1,1}^+,\; x_{1,1}^- \notin \textnormal{Id}^*(W_{\eta_2}^{\mathrm{gr}})$, while
$x_{1,0}^- \in \textnormal{Id}^*(W_{\eta_2}^{\mathrm{gr}})\cap I.$ Therefore, $\Gamma_1^* \cap \textnormal{Id}^*(W_{\eta_2}^{\mathrm{gr}}) \subseteq I.$

Now let $f \in \Gamma_2^* \cap \textnormal{Id}^*(W_{\eta_2}^{\mathrm{gr}})$. Without loss of
generality, we may assume that $f$ is multihomogeneous. Reducing $f$ modulo $I$,
we may write $f=\alpha\, x_{1,1}^+x_{2,1}^-$, for some $ \alpha\in F.$ Considering the evaluation
\[
x_{1,1}^+ \mapsto e_{13}+e_{24},
\qquad
x_{2,1}^- \mapsto e_{12}+e_{34}.
\] we obtain $\alpha=0$. Consequently, $\Gamma_2^* \cap \textnormal{Id}^*(W_{\eta_2}^{\mathrm{gr}}) \subseteq I.$

Since $\textnormal{Id}^*(W_{\eta_2}^{\mathrm{gr}})$ is generated by its proper multilinear
polynomials, we conclude that $\textnormal{Id}^*(W_{\eta_2}^{\mathrm{gr}})=I.$

Finally, we analyze the proper highest weight vectors listed in
Table~\ref{tabela_mult}. From the description of the $T_*$-ideal of
$W_{\eta_2}^{\mathrm{gr}}$ obtained above, it follows that the only proper
highest weight vectors which are not identities of $W_{\eta_2}^{\mathrm{gr}}$ are $x_{1,1}^+,\, x_{1,1}^-$ and $x_{1,1}^+x_{2,1}^-.$ Therefore, the only non-zero proper cocharacters of
$W_{\eta_2}^{\mathrm{gr}}$ are $\chi_{((1)_{1^+})},$ $
\chi_{((1)_{1^-})}$ and $\chi_{((1)_{1^+})} \otimes \chi_{((1)_{1^-})}$.

The codimension sequence can now be computed using the relations~\eqref{codimension}, \eqref{proper_codimension}, and~\eqref{cocharacter}.

\end{proof}

In what follows, we present some results concerning the direct sum of the previously defined algebras.

\begin{lemma}\label{codim_G2_sum_W} ~ For the $*$-algebras $\mathcal{G}_{2,\gamma}^{gri} \oplus W_{\eta_1}^{gri}$, $\mathcal{G}_{2,\gamma}^{gr} \oplus W_{\eta_2}^{gr}$ and $\mathcal{G}_{2,\tau}^{gri} \oplus W_{\eta_3}^{gri}$ we have      \begin{enumerate}
        \item[1)] $\textnormal{Id}^*(\mathcal{G}_{2,\gamma}^{gri} \oplus W_{\eta_1}^{gri})=\langle [x_{1,0}^+,x], x_{1,0}^-x_{2,0}^-, x_{1,0}^-x_{2,1}^-, x_{1,1}^+x_{2,1}^+, x_{1,1}^+x_{2,1}^-, x_{1,1}^-x_{2,1}^- \rangle_{T_*}$.

        \item[2)] $\textnormal{Id}^*(\mathcal{G}_{2,\gamma}^{gr} \oplus W_{\eta_2}^{gr})=\langle [x_{1,0}^+,x], x_{1,1}^-x_{2,1}^-, x_{1,1}^-x_{2,0}^-, x_{1,1}^+x_{2,1}^+, x_{1,1}^+x_{2,0}^-, x_{1,0}^-x_{2,0}^- \rangle_{T_*}$.

        \item[3)] $\textnormal{Id}^*(\mathcal{G}_{2,\tau}^{gri} \oplus W_{\eta_3}^{gri})=\langle [x_{1,0}^+,x], x_{1,0}^-x_{2,0}^-, x_{1,0}^-x_{2,1}^+, x_{1,1}^-x_{2,1}^-, x_{1,1}^-x_{2,1}^+, x_{1,1}^+x_{2,1}^+ \rangle_{T_*}$.
        
        \item[4)] $c_n^{*}(B)=1+3n+
            {4}\displaystyle\binom{n}{2}$, where $B \in \{\mathcal{G}_{2,\gamma}^{gri} \oplus W_{\eta_1}^{gri},\mathcal{G}_{2,\gamma}^{gr} \oplus W_{\eta_2}^{gr}, \mathcal{G}_{2,\tau}^{gri} \oplus W_{\eta_3}^{gri} \}$.

        \item[5)] The proper non-zero $(n_1, \ldots, n_{4})$-cocharacters of these algebras are given by  Table \ref{table_sum}.
\begin{longtable}{ll}
\endfirsthead
\endhead
\endfoot
\endlastfoot
\toprule

$\mathcal{G}_{2,\gamma}^{gri} \oplus W_{\eta_1}^{gri}$
& $\chi_{((1)_{1^-})},\quad \chi_{((1)_{0^-})}, \quad \chi_{((1)_{1^+})}, \quad 2\chi_{((1)_{1^+})} \otimes \chi_{((1)_{0^-})}$ \\[2mm]

$\mathcal{G}_{2,\gamma}^{gr} \oplus W_{\eta_2}^{gr}$
& $\chi_{((1)_{1^-})},\quad \chi_{((1)_{0^-})}, \quad \chi_{((1)_{1^+})}, \quad 2\chi_{((1)_{1^+})} \otimes \chi_{((1)_{1^-})}$ \\[2mm]

$\mathcal{G}_{2,\tau}^{gri} \oplus W_{\eta_3}^{gri}$
& $\chi_{((1)_{1^-})},\quad  \chi_{((1)_{0^-})},\quad  \chi_{((1)_{1^+})},\quad 2\chi_{((1)_{0^-})} \otimes \chi_{((1)_{1^-})}$ \\[2mm]

\toprule

\caption{Proper non-zero cocharacters of $W$.}
\label{table_sum}

\end{longtable}

    \end{enumerate}
        
    \end{lemma}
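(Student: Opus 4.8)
The plan is to treat the three algebras uniformly, so I carry out the argument only for $B=\mathcal{G}_{2,\gamma}^{gri}\oplus W_{\eta_1}^{gri}$, writing $B'=\mathcal{G}_{2,\gamma}^{gri}$ and $B''=W_{\eta_1}^{gri}$; the other two differ only in the labels of the matched homogeneous components. Throughout I use $\textnormal{Id}^*(B)=\textnormal{Id}^*(B')\cap\textnormal{Id}^*(B'')$.

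For item (1) let $I$ be the $T_*$-ideal on the right-hand side. The inclusion $I\subseteq\textnormal{Id}^*(B)$ is checked generator by generator on each summand: in both $B'$ and $B''$ the symmetric degree-$0$ component is spanned by the unit, so $[x_{1,0}^+,x]$ vanishes, and each listed quadratic product is zero because the corresponding products of the basis elements $e_1,e_2$ of $\mathcal{G}_2$ and $e_{12}+e_{34},\,e_{13}+e_{24},\,e_{14}$ of $W$ vanish. For the reverse inclusion I reduce modulo $I$: by Proposition~\ref{Gammancontidoemid} it suffices to establish $\Gamma_3^*\subseteq I$ and to describe $\Gamma_1^*\cap\textnormal{Id}^*(B)$ and $\Gamma_2^*\cap\textnormal{Id}^*(B)$, after which $\textnormal{Id}^*(B)=I$ follows since both $T_*$-ideals are generated by their proper multilinear polynomials. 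In degree $1$ the proper polynomials $x_{1,0}^-,x_{1,1}^+,x_{1,1}^-$ are nonidentities, and in degree $2$ the only proper polynomials surviving modulo $I$ are $[x_{1,0}^-,x_{2,1}^+]$ and $x_{1,0}^-x_{2,1}^+$; evaluating a combination $\alpha[x_{1,0}^-,x_{2,1}^+]+\beta\,x_{1,0}^-x_{2,1}^+$ on $B''$ (where the commutator dies and the product equals $e_{14}$) forces $\beta=0$, and on $B'$ (where the commutator equals $2e_1e_2$) forces $\alpha=0$. Hence $\Gamma_1^*\cap\textnormal{Id}^*(B)$ and $\Gamma_2^*\cap\textnormal{Id}^*(B)$ lie in $I$, which gives items (1)--(3).

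For items (4) and (5) I would feed this into Remark~\ref{remark}. Since $\textnormal{Id}^*(B)\subseteq\textnormal{Id}^*(B')$ and $\textnormal{Id}^*(B)\subseteq\textnormal{Id}^*(B'')$, both summands lie in $\varGstar(B)$, so each multiplicity of $B$ dominates the corresponding one of either summand; conversely the space of proper highest weight vectors of a multipartition embeds, modulo $\textnormal{Id}^*(B)=\textnormal{Id}^*(B')\cap\textnormal{Id}^*(B'')$, into the direct sum of the two quotients, whence $m_{\lambda_1,\ldots,\lambda_4}(B)\le m_{\lambda_1,\ldots,\lambda_4}(B')+m_{\lambda_1,\ldots,\lambda_4}(B'')$. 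Because both summands have vanishing proper cocharacter in degree $\ge 3$ (Lemmas~\ref{codim_G_2} and \ref{codim_W}), so does $B$; in degree $1$ the cocharacter is the union of the two summands' degree-$1$ cocharacters, and in degree $2$ the only multipartition occurring in both summands is $((1)_{0^-},(1)_{1^+})$, where the two independent highest weight vectors found above force multiplicity exactly $2$. This yields Table~\ref{table_sum}, and substituting $\gamma_0^*(B)=1$, $\gamma_1^*(B)=3$, $\gamma_2^*(B)=4$ and $\gamma_n^*(B)=0$ for $n\ge 3$ into \eqref{cocharacter}, \eqref{proper_codimension} and \eqref{codimension} gives $c_n^*(B)=1+3n+4\binom{n}{2}$.

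The delicate step is the reduction $\Gamma_3^*\subseteq I$. After commutator expansions that move the symmetric degree-$0$ variables into commutators killed by $[x_{1,0}^+,x]$, the quadratic generators annihilate every degree-$3$ proper monomial except the two alternating words $x_{1,0}^-x_{2,1}^+x_{3,0}^-$ and $x_{1,1}^+x_{2,0}^-x_{3,1}^+$, in which no adjacent pair is a generator. These require substituting a composite homogeneous element into a quadratic generator: substituting the skew degree-$1$ element $x_{2,1}^+\circ x_{3,0}^-$ for the skew degree-$1$ variable in $x_{1,0}^-x_{2,1}^-$ and using $x_{1,0}^-x_{3,0}^-\in I$ gives $x_{1,0}^-x_{2,1}^+x_{3,0}^-\in I$, while substituting the symmetric degree-$1$ element $[x_{2,0}^-,x_{3,1}^+]$ for a symmetric degree-$1$ variable in $x_{1,1}^+x_{2,1}^+$ and using $x_{1,1}^+x_{3,1}^+\in I$ gives $x_{1,1}^+x_{2,0}^-x_{3,1}^+\in I$. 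Verifying that these two substitutions, together with the rearrangement of an arbitrary proper monomial into normal form, really exhaust $\Gamma_3^*$ is the only point needing genuine care; the rest is bookkeeping against the tables above.
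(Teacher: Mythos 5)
Your proof is correct and follows essentially the same route as the paper: verify $I\subseteq\textnormal{Id}^*(B)$, reduce to proper polynomials of low degree via Proposition~\ref{Gammancontidoemid}, kill the surviving degree-$2$ combination in the component $\Gamma_{(1_{0^-},1_{1^+})}$ by evaluating separately on the Grassmann and $W$ summands, and then read off the multiplicities and the codimension from relations~\eqref{codimension}--\eqref{cocharacter}. Your explicit substitution argument for $\Gamma_3^*\subseteq I$ supplies a detail the paper only asserts, and your subadditivity bound $m_{\lambda}(B)\le m_{\lambda}(B')+m_{\lambda}(B'')$ is an equivalent substitute for the paper's use of the generic upper bound $2$ from Table~\ref{tabela_mult}.
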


\begin{proof} 
Since the proofs of the three cases are analogous, we focus on the $*$-algebra $\mathcal{G}_{2,\gamma}^{gri}\oplus W_{\eta_1}^{gri}$. We start by noting that 
$$I=\langle [x_{1,0}^+,x], x_{1,0}^-x_{2,0}^-, x_{1,0}^-x_{2,1}^-, x_{1,1}^+x_{2,1}^+, x_{1,1}^+x_{2,1}^-, x_{1,1}^-x_{2,1}^- \rangle_{T_*} \subseteq \textnormal{Id}^*(\mathcal{G}_{2,\gamma}^{gri} \oplus W_{\eta_1}^{gri}).$$
Let $f\in \Gamma_n^* \cap \textnormal{Id}^*(\mathcal{G}_{2,\gamma}^{gri} \oplus W_{\eta_1}^{gri})$ be a multihomogeneous identity. Since $\mathcal{G}_{2,\gamma}^{gri} \oplus W_{\eta_1}^{gri}$ has quadratic codimension growth, we have
$$\Gamma_n^* =\Gamma_n^* \cap \textnormal{Id}^*(\mathcal{G}_{2,\gamma}^{gri} \oplus W_{\eta_1}^{gri}) \subseteq I, \quad \textnormal{for all} \quad n\geq 3.$$ 

Moreover, for identities of degree 1, it is straightforward to see that
$$ \Gamma_1^{*}\cap \textnormal{Id}^*(\mathcal{G}_{2,\gamma}^{gri} \oplus W_{\eta_1}^{gri})\subseteq I. $$

It remains to consider the case where $f$ is a multilinear proper identity of degree 2. In this case, after reducing $f$ modulo $I$, we may assume that $f=\alpha x_{1,1}^+x_{2,0}^- +\beta [x_{1,1}^+,x_{2,0}^-]$. 
Considering the evaluation $x_{1,1}^+ \mapsto (e_2,0)$ and $x_{2,0}^- \mapsto (e_1,0)$, we obtain $\alpha +2\beta=0$.
Now, taking the evaluation $x_{1,1}^+ \mapsto (0,e_{13}+e_{24})$ and $x_{2,0}^- \mapsto (0,e_{12}+e_{34})$, we obtain $\alpha =0$.
Therefore, we must have $\alpha=\beta=0$ and thus $f\in I$. These arguments prove that $I=\textnormal{Id}^*(\mathcal{G}_{2,\gamma}^{gri} \oplus W_{\eta_1}^{gri})$.

Now, note that the proper h.w.v.'s $x_{1,1}^+,x_{1,0}^-,x_{1,1}^-$ and $[x_{1,1}^+,x_{2,0}^-]$ are not identities of $\mathcal{G}_{2,\gamma}^{gri} \oplus W_{\eta_1}^{gri}$. By Proposition \ref{prop:non-zero_multiplicities}, the non-zero multiplicities of $\mathcal{G}_{2,\gamma}^{gri} \oplus W_{\eta_1}^{gri}$ satisfy
$$m_{((1)_{1^+})}=m_{((1)_{0^-})}=m_{((1)_{1^-})} = 1 \quad \textnormal{and} \quad  1 \leq m_{((1)_{1^+} ,(1)_{0^-})} \leq 2$$

Moreover, by the previous discussion, there are no non-zero scalars $\alpha$ and $\beta$ such that $\alpha x_{1,1}^+x_{2,0}^- +\beta [x_{1,1}^+,x_{2,0}^-]$ is an identity of $\mathcal{G}_{2,\gamma}^{gri} \oplus W_{\eta_1}^{gri}$. Therefore, by Proposition \ref{prop:non-zero_multiplicities}, we have $m_{((1)_{1^+} ,(1)_{0^-})} = 2$ and the only non-zero proper cocharacters of
$\mathcal{G}_{2,\gamma}^{gri} \oplus W_{\eta_1}^{gri}$ are $\chi_{((1)_{1^-})},\,\chi_{((1)_{0^-})}, \, \chi_{((1)_{1^+})} $ and $2\chi_{((1)_{1^+})} \otimes \chi_{((1)_{0^-})}$.

Using relations~\eqref{codimension}, \eqref{proper_codimension} and~\eqref{cocharacter}, we obtain
\[
c_n^{*}\bigl(\mathcal{G}_{2,\gamma}^{\mathrm{gri}} \oplus W_{\eta_1}^{\mathrm{gri}}\bigr)
= 1 + 3n + 4\binom{n}{2}.
\]\end{proof}

  Before concluding this section, we establish a structural result for unitary $*$-algebras with polynomial codimension growth, which allows us to show that the varieties generated by the unitary algebras above are minimal of quadratic growth.

First, recall that $\widetilde{A}=A\times F$ is the unitary algebra obtained from $A$ through the product $$(a_1,\alpha_1)(a_2,\alpha_2)=(a_1a_2+ \alpha_2a_1+\alpha_1a_2,\alpha_1\alpha_2).$$ Note that, if $A$ is a $*$-algebra, then $\widetilde{A}$ is also a $*$-algebra where the $\mathbb{Z}_2$-grading is given by $\widetilde{A}=(A_0\times F , A_1\times \{0\})$ and the superinvolution is given by $(a,\alpha)^*=(a^*,\alpha)$.

\begin{proposition}\label{unitary_ccg}
    Let $A$ be a unitary $*$-algebra of polynomial codimension growth. If $B\in \textnormal{var}^*(A)$, then either $B$ is unitary or $B\sim_{T_*} N$ or $B\sim_{T_*} C\oplus N$, where $N$ is a nilpotent $*$-algebra and $C$ is a commutative $*$-algebra with trivial superinvolution.
\end{proposition}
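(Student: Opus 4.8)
The plan is to reduce to the structure theorem of Proposition~\ref{f+j}, isolate the nilpotent part, and analyze the non-nilpotent components through a superinvolution-compatible Wedderburn--Malcev decomposition, using the closure of the variety under adjoining a unit together with the classification of almost polynomial growth algebras from \cite{GIL} to force the trichotomy.

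First I would observe that $\IdGstar{A}\subseteq\IdGstar{B}$ gives $c_n^*(B)\le c_n^*(A)$, so $B$ has polynomial codimension growth and Proposition~\ref{f+j} applies: up to $T_*$-equivalence we may write $B=B_1\oplus\cdots\oplus B_m$ with each $B_i$ finite dimensional and $\dim_F B_i/J(B_i)\le 1$. Relabel so that $B_1,\dots,B_r$ are the non-nilpotent summands (those with $\dim_F B_i/J(B_i)=1$) and set $N:=B_{r+1}\oplus\cdots\oplus B_m$, a nilpotent $*$-algebra (a finite direct sum of nilpotent $*$-algebras). If $r=0$ this already yields $B\sim_{T_*}N$, which is the second alternative.

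The main tool for the remaining cases is that \emph{adjoining a unit does not leave the variety}. Since $A$ is unitary, $\IdGstar{A}$ is generated as a $T_*$-ideal by its proper multilinear $*$-polynomials (the result of \cite{Willer}), and a proper polynomial is insensitive to a central symmetric element of degree $0$: in $\widetilde{B}=B\times F$ the adjoined unit is symmetric of degree $0$ and central, so it is killed by every commutator and contributes nothing to the leading skew or degree-one factors. Hence every proper generator of $\IdGstar{A}$ that vanishes on $B$ also vanishes on $\widetilde{B}$, giving $\IdGstar{A}\subseteq\IdGstar{\widetilde B}$, i.e. $\widetilde B\in\varGstar(A)$. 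Consequently, if $\IdGstar{B}=\IdGstar{\widetilde B}$ then $B$ is $T_*$-equivalent to the unitary algebra $\widetilde B$, which is the first alternative; so I may assume $\IdGstar{B}\supsetneq\IdGstar{\widetilde B}$, meaning $B$ satisfies a genuinely non-proper identity.

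For each non-nilpotent $B_i$ I invoke the Wedderburn--Malcev decomposition compatible with the superinvolution to write $B_i=Fe_i\oplus J(B_i)$ with $e_i=e_i^*\in(B_i)_0$, using that $B_i/J(B_i)\cong F$ carries the trivial superinvolution and lives in degree $0$. The Peirce decomposition relative to $e_i$ splits $J(B_i)$ into the components $e_iJ(B_i)e_i$, $e_iJ(B_i)(1-e_i)$, $(1-e_i)J(B_i)e_i$, $(1-e_i)J(B_i)(1-e_i)$, where $*$ fixes the first and last and interchanges the two middle ones, and all are graded. The heart of the argument is to show that a nonzero off-diagonal Peirce component, a non-central idempotent, or a nontrivial action of $*$ on the top together with a nonzero radical each produce, after unitalization, a sub-$*$-algebra $T_*$-equivalent to one of the three almost polynomial growth algebras of \cite{GIL} (the suitably graded $F\oplus F$, the Grassmann algebra, or the four-dimensional subalgebra of $UT_4$): for instance a nonzero $e_iJ(B_i)(1-e_i)$ yields $w$ with $e_iw=w$, $we_i=0$, $w^2=0$, generating with $e_i$ the forbidden $UT$-type block, while a skew top element forces a Grassmann-type block. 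Excluding these, in the non-unitary branch every $e_i$ must annihilate $J(B_i)$ on both sides with $B_i$ commutative and $*$ trivial on $B_i$; collecting the tops gives a commutative $*$-algebra $C$ with trivial superinvolution, the leftover radicals join $N$, and $B\sim_{T_*}C\oplus N$.

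I expect the genuine obstacle to be this last step: converting each failure of unitarity into an explicit forbidden $*$-subalgebra while simultaneously controlling the $\mathbb{Z}_2$-grading and the superinvolution, so that the constructed generators really reproduce the grading and superinvolution of one of the algebras in \cite{GIL}. The bookkeeping of the four parities $0^\pm,1^\pm$ across the Peirce components, and the $T_*$-equivalence argument that absorbs the non-unital radical into $N$ (or into the commutative top), is where the careful work lies; the reduction and the unitalization closure are comparatively routine.
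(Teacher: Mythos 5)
Your first two steps coincide with the paper's: the reduction to a finite-dimensional $B$ of the form given by Proposition~\ref{f+j}, and the observation that $\widetilde{B}=B\times F$ stays in $\textnormal{var}^*(A)$ because $\textnormal{Id}^*(A)$ is generated by proper multilinear polynomials and the adjoined unit is central, symmetric and of degree $0$, hence dies inside every commutator. That part is correct and is exactly the paper's argument.

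The gap is in the decisive step. You reduce the problem to showing that a non-nilpotent summand $B_i=Fe_i\oplus J(B_i)$ with $e_iJ(B_i)\neq 0$ or $J(B_i)e_i\neq 0$ forces exponential growth, and you propose to prove this by a Peirce decomposition followed by an explicit construction, inside $\widetilde{B}$, of $*$-subalgebras $T_*$-equivalent to one of the three almost-polynomial-growth algebras of \cite{GIL}. This is precisely the part you leave unexecuted, and it is where the real difficulty sits: you would have to produce not just the underlying algebras but the correct $\mathbb{Z}_2$-gradings and superinvolutions on them, track the four parities $0^{\pm},1^{\pm}$ through the Peirce components, and close everything under $*$ (the element $w\in e_iJ(B_i)(1-e_i)$ need not be symmetric or skew, so the $*$-subalgebra it generates also contains $w^*$ and is larger than the $UT_2$-block you describe). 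Moreover one of your cases is vacuous: the top $B_i/J(B_i)\cong F$ always carries the trivial superinvolution, so there is no ``nontrivial action of $*$ on the top'' and no ``skew top element.'' The paper sidesteps all of this with one observation you miss: writing $B=B_1\oplus\cdots\oplus B_k+J$ with $B_iJB_l=0$ for $i\neq l$ (the decomposition of \cite[Theorem 4.1]{GIL2}), the condition $B_iJ\neq 0$ becomes, after unitalization, $\overline{B}_i\,\overline{J}\,\widetilde{F}\neq 0$ for the two \emph{distinct} one-dimensional $*$-simple components $\overline{B}_i$ and $\widetilde{F}$ of $\widetilde{B}$, and by \cite[Theorem 2.3]{Ioppololamat} this single condition already forces $c_n^*(\widetilde{B})$ to grow exponentially, contradicting $\widetilde{B}\in\textnormal{var}^*(A)$. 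So your plan is not wrong in spirit, but as written it is an unproved program for the hardest step, whereas the intended proof disposes of that step by a direct appeal to the known structural criterion for exponential growth.
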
 
\begin{proof}
     Let $\widetilde{B}$ be the unitary $*$-algebra obtained from $B\in \textnormal{var}^*(A)$. We can decompose $\tilde{B}$ as  $$\widetilde{B}=B_0^+\times F + B_0^-\times \{0\} + B_1^+ \times \{0\} + B_1^- \times \{0\}$$ as a direct sum of subspaces. We start by proving that $\widetilde{B}\in \textnormal{var}^*(A)$. Since ${A}$ and $\widetilde{B}$ are unitary, we consider $f\in \textnormal{Id}^*(A)$ a multilinear proper identity and $\lambda$ the following evaluation on $\tilde{B}$:
     $$\lambda ({f})=f((a_{1,0}^+,\alpha_1),\ldots,(a_{t,0}^+, \alpha_t), (a_{1,0}^-,0),\ldots,(a_{u,0}^-,0), (a_{1,1}^+,0),\ldots,(a_{v,1}^+,0), (a_{1,1}^-,0),\ldots,(a_{w,1}^-,0)),$$ where $a_{i,j}^\epsilon \in B_j^\epsilon$ and $\alpha_i\in F$, for all $j\in \{0,1\}$ and $\epsilon\in\{+,-\}$.

    Since $f$ is a proper polynomial, it is a linear combination of terms of the type $$x_{1,1}^+ \cdots x_{p,1}^+ x_{1,0}^- \cdots x_{q,0}^- x_{1,1}^- \cdots x_{r,1}^- w_1 \cdots w_s,$$ where the $w_i$'s are left normed Lie commutators in the variables from {$X_0^+\cup X_0^-\cup X_1^+ \cup X_1^-$}. Note that the scalars $\alpha_i\in F$ appear only inside commutators. Therefore, we obtain 
    $$\lambda(f)=f((a_{1,0}^+,0),\ldots,(a_{t,0}^+, 0), (a_{1,0}^-,0),\ldots,(a_{u,0}^-,0), (a_{1,1}^+,0),\ldots,(a_{v,1}^+,0), (a_{1,1}^-,0),\ldots,(a_{w,1}^-,0))=0,$$ since $f \in \textnormal{Id}^*({A})$.  Hence, $\widetilde{B}\in \textnormal{var}^*(A)$. 
      
    By ~\cite[Corollary 5.1 and Theorem 5.3]{GIL}, we can consider ${B}$ a finite-dimensional $*$-algebra. If $B$ is already a unitary $*$-algebra, we are done. Suppose then that $B$ is not unitary and assume, as we may, that $F$ is algebraically closed. By~\cite[Theorem 4.1]{GIL2}, we have 
$$B=B_{1}\oplus\cdots\oplus B_{k}+J $$
where $J$ is the Jacobson radical of $B$, $B_{1},\ldots, B_{k}$ are finite-dimensional $*$-simple superalgebras isomorphic to $F$ endowed with trivial superinvolution, and $B_{i}JB_{l}=0$ for all $i\ne l$. Now, if $B_{i}J=JB_{i}=0$ for all $i\ge 1$, then $B=B_{1}\oplus\cdots\oplus B_{k}\oplus J$ and $B$ is the direct sum of a commutative $*$-algebra with trivial superinvolution and a nilpotent algebra or a nilpotent $*$-algebra. In this case, we are done.

Otherwise, there exists $i$ such that $B_{i}J\ne 0$ (or $JB_i \ne 0$). Since $\widetilde{B}=B\times F$, we have
$$\widetilde{B}=\overline{B}_{1}\oplus\cdots\oplus \overline{B}_{k}\oplus\widetilde{F}+\overline{J}$$
where $\overline{B}_{i}=\{(b_{i},0)|b_{i}\in B_{i}\}$, $\widetilde{F}=\{(0,\alpha)|\alpha\in F\}\cong F$ and $\overline{J}=\{(j,0)|j\in J\}$. 
Moreover, since $\overline{B}_{i}\overline{J}\widetilde{F}=\overline{B}_{i}\overline{J}\ne \{0\}$ then, by~\cite[Theorem 2.3]{Ioppololamat}, the sequence $c_n^*(\widetilde{B})$, $n\geq 1$, grows exponentially. But this is a contradiction since $\widetilde{B}\in \textnormal{var}^*(A)$ and ${A}$ has polynomial codimension growth.
\end{proof}

 Let $\mathcal{L}_2$ denote the following collection of $*$-algebras:
\[
\mathcal{L}_2 =
\{\mathcal{G}_{2,\tau},\, \mathcal{G}_{2,\gamma }^{\mathrm{gr}},\, \mathcal{G}_{2,\tau }^{\mathrm{gr}},\, \mathcal{G}_{2,\psi }^{\mathrm{gr}},  \mathcal{G}_{2,\tau }^{\mathrm{gri}},\, \mathcal{G}_{2,\gamma }^{\mathrm{gri}}, C_{3,i_2}, C_{3,i_1}^{\mathrm{gr}},C_{3,i_3}^{\mathrm{gr}}, W_{\eta_2}^{\mathrm{gr}}, W_{\eta_1}^{\mathrm{gri}}, W_{\eta_3}^{\mathrm{gri}}\}.
\]

\begin{proposition}\label{cor_W_minimal}
    The algebras $B\in \mathcal{L}_2$ generate minimal $*$-varieties with quadratic codimension growth. 
\end{proposition}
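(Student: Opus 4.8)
The plan is to treat the two assertions—quadratic growth and minimality—separately. That each $B\in\mathcal{L}_2$ generates a variety of quadratic growth is immediate: Lemmas~\ref{codim-C_2-C_3}, \ref{codim_G_2} and~\ref{codim_W} give in every case a codimension of the form $c_n^*(B)=1+\gamma_1^*(B)\,n+\gamma_2^*(B)\binom{n}{2}$ with $\gamma_2^*(B)\neq 0$, so $c_n^*(B)\approx \tfrac12\gamma_2^*(B)\,n^2$. The same tables reveal three facts that drive the minimality argument: each such $B$ has \emph{exactly one} nonzero proper cocharacter of degree $2$, it has multiplicity $1$, and $\gamma_n^*(B)=0$ for $n\geq 3$ by Proposition~\ref{Gammancontidoemid}.

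For minimality, let $\mathcal W=\varGstar(C)\subsetneq \varGstar(B)$ be a proper subvariety. First I would apply Proposition~\ref{unitary_ccg} (with $A=B$) to reduce to three cases: $C$ is unitary, or $C\sim_{T_*}N$ with $N$ nilpotent, or $C\sim_{T_*}D\oplus N$ with $D$ commutative of trivial superinvolution. In the two non-unitary cases the growth is at most linear: a nilpotent algebra satisfies $c_n^*=0$ for all large $n$, while a commutative algebra $D$ with trivial superinvolution has no skew elements and satisfies $D_1D_1=0$, so every proper polynomial of degree $\geq 2$ vanishes on $D$ and $c_n^*(D)\leq 1+n$. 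Hence $c_n^*(C)$ is at most linear, giving $t<2$.

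The heart of the proof is the unitary case, where I aim to show $\gamma_2^*(C)=0$. Suppose not. Then the unique degree-$2$ proper h.w.v.\ $f$ of $B$ is not an identity of $C$, so by Proposition~\ref{prop:nonzero_multiplicities} and Remark~\ref{remark} the corresponding multiplicity satisfies $\widetilde m_{\lambda}=m_\lambda=1$. The key structural observation, read off case by case from the cocharacter tables, is that $f$ is, up to a scalar, a product, Jordan product, or commutator of homogeneous variables whose types, together with the homogeneous type of any nonzero value of $f$, exhaust exactly those components among $A_0^-,A_1^+,A_1^-$ that are nonzero in $B$—and these are precisely the types indexing the degree-$1$ proper cocharacters of $B$. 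Indeed, if $C_\tau=0$ for a type $\tau$ occurring as a variable of $f$, then $f\equiv 0$ on $C$; and since $f$ is homogeneous and $(\pm)$-symmetric, each of its nonzero values lies in a single component $A_{\tau'}$, so $f\not\equiv 0$ forces $C_{\tau'}\neq 0$. Consequently every degree-$1$ proper cocharacter of $B$ also survives for $C$, i.e.\ $\widetilde m_\mu=m_\mu$ for all $|\mu|=1$. Together with $\gamma_n^*(C)=0$ for $n\geq 3$ and $\gamma_0^*(C)=1$, all multiplicities of $C$ coincide with those of $B$, whence $c_n^*(C)=c_n^*(B)$ for every $n$. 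Since $\Idd(B)\subseteq\Idd(C)$ and the two codimensions agree, the intersections $P_n^*\cap\Idd(B)$ and $P_n^*\cap\Idd(C)$ have equal dimension and hence coincide for all $n$; as $T_*$-ideals are determined by their multilinear parts in characteristic zero, this gives $\Idd(C)=\Idd(B)$, contradicting the properness of $\mathcal W$. Therefore $\gamma_2^*(C)=0$ and $c_n^*(C)=1+\gamma_1^*(C)\,n$ is linear.

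The main obstacle is the verification of the structural observation in the unitary case: one must check, for each of the twelve algebras, that the variables and the values of its unique degree-$2$ highest weight vector together account for all nonzero symmetric/skew homogeneous components (equivalently, for all degree-$1$ cocharacters). This is a finite, algebra-by-algebra computation using the explicit superinvolutions and the multiplicative relations among the basis elements—for instance, that $a\circ b$ of two symmetric odd elements is skew even, that $[a,b]$ of a symmetric even and a skew odd element is symmetric odd, and that the relevant Grassmann and commutative products (such as $(e_{12}+e_{34})(e_{13}+e_{24})=e_{14}$) are nonzero. All of this data is exactly what is recorded in Lemmas~\ref{codim-C_2-C_3}, \ref{codim_G_2} and~\ref{codim_W}, so the check is routine once organized by the type $\tau'$ in which the value of $f$ lands.
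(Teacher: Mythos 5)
Your proposal is correct and follows essentially the same route as the paper: reduce to the unitary case via Proposition~\ref{unitary_ccg}, observe that quadratic growth forces the unique degree-$2$ proper multiplicity to equal $1$, deduce that the degree-$1$ multiplicities must then also coincide with those of $B$ (the paper does this for $W_{\eta_2}^{\mathrm{gr}}$ by noting $x_{1,1}^+x_{2,1}^-\not\equiv 0$ forces $x_{1,1}^+, x_{1,1}^-\not\equiv 0$, which is exactly your ``variables and value of $f$'' observation), and conclude $c_n^*(A)=c_n^*(B)$ and hence $T_*$-equivalence. The only difference is organizational: you treat all twelve algebras uniformly, whereas the paper works out one case in detail, asserts the same argument for five others, and cites \cite{ISDSV} for the remainder.
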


\begin{proof}
 Assume that $A \in \textnormal{var}^*(W_{\eta_2}^{\mathrm{gr}})$ generates a $*$-variety of quadratic codimension growth. By Proposition~\ref{unitary_ccg}, we may assume without loss of generality that $A$ is a unitary $*$-algebra. We compare the proper cocharacters of $A$ and $W_{\eta_2}^{\mathrm{gr}}$.

Since $A \in \textnormal{var}^*(W_{\eta_2}^{\mathrm{gr}})$, Table~\ref{table_W} implies that the multiplicities of $A$ satisfy $$0
\leq m_{((1)_{1^+})}, m_{((1)_{1^-})} \leq 1\quad \mbox{ and }0 \leq m_{((1)_{1^+},(1)_{1^-})} \leq 1.$$ As $A$ has quadratic codimension growth, it follows that $m_{((1)_{1^+},(1)_{1^-})}=1$, hence $x_{1,1}^+x_{2,1}^- \notin \textnormal{Id}^*(A)$. Consequently, $x_{1,1}^+, x_{1,1}^-\notin \textnormal{Id}^*(A)$, which implies $m_{\lambda}=1$ for all $\lambda \in \{(1)_{1^+},(1)_{1^-}\}$.

Therefore, $c_n^*(A)=c_n^*(W_{\eta_2}^{\mathrm{gr}})$ and, since $A \in \textnormal{var}^*(W_{\eta_2}^{\mathrm{gr}})$, we conclude that $A \sim_{T_*} W_{\eta_2}^{\mathrm{gr}}$. Hence, $W_{\eta_2}^{\mathrm{gr}}$ generates a minimal $*$-variety.

The same argument applies to the others $*$-algebras. \end{proof}

It is important to emphasize that the authors in \cite[Corollary 7.2]{ISDSV} have already proved that the $*$-algebras $\mathcal{G}_{2,\tau}, \mathcal{G}_{2,\tau }^{\mathrm{gri}},\, \mathcal{G}_{2,\gamma }^{\mathrm{gri}}, C_{3,i_2}, W_{\eta_1}^{\mathrm{gri}}$ and $ W_{\eta_3}^{\mathrm{gri}}$ generate minimal varieties of quadratic codimension growth. Therefore, we have provided an alternative proof for these $*$-algebras.

\section{Characterizing varieties with non-zero multiplicities}

This section is devoted to the study of unitary $*$-varieties of quadratic codimension growth. The technique employed consists of establishing a correspondence between the non-zero multiplicities in the proper cocharacters and the algebras generating minimal $*$-varieties in the variety. As a consequence, we obtain a classification of all unitary varieties of quadratic codimension growth and a classification of the minimal ones.

Since we are interested in varieties of quadratic codimension growth, Proposition \ref{Gammancontidoemid} shows that it suffices to analyze the multilinear proper identities of degree $1$ and $2$ of an algebra generating such a variety. More precisely, it is enough to consider the multihomogeneous ones. Accordingly, we decompose $\Gamma_1^*$ and $\Gamma_2^*$ as given in (\ref{Gamma_n}) into their multihomogeneous components as follows

\[
\Gamma_1^*\cong \bigoplus\Gamma_{(1_{t^\epsilon})} \quad \text{and} \quad \Gamma_2^*\cong \Gamma_{(2_{0^+})}\bigoplus (\Gamma_{(2_{t^\epsilon})} \oplus \Gamma_{(1_{0^+},1_{t^\epsilon})}) \bigoplus 2\Gamma_{(1_{s^\delta}, 1_{t^\epsilon})},
\]
where $t^\epsilon, s^\delta \in \{0^-,1^+,1^-\}$ and $t^\epsilon\neq  s^\delta$.

Motivated by Theorem \ref{f+j}, we assume that $A$ is a unitary $*$-algebra of the type $F+J(A)$ generating a variety with quadratic codimension growth. Moreover, we consider 
$$\pi_{n_1,\ldots,n_4}(A)
=
\sum_{(\lambda_1,\ldots,\lambda_4)\vdash (n_1,\ldots,n_4)}
m_{\lambda_1,\ldots,\lambda_4}\,
\chi_{\lambda_1}\otimes \chi_{\lambda_2}\otimes \chi_{\lambda_3}\otimes\chi_{\lambda_4},$$ 
the decomposition of the proper $(n_1,\ldots, n_4)$-cocharacters of $A$, where $(\lambda_1,\ldots,\lambda_4)$ is a multipartition of $(n_1,\ldots,n_4)$ and $n=n_1+\cdots+n_4$ with $n\in\{1,2\}$.

According to Table \ref{tabela_mult}, we observe that the multiplicities above satisfy 
\begin{equation}\label{mult-0or1or2}
    0 \leq m_{ \lambda } \leq 1 \quad \text{and} \quad
    0 \leq m_{((1)_{s^\delta},(1)_{t^\epsilon})} \leq 2,
\end{equation}
where $ \lambda \in \{((1)_{u^{\epsilon_1}}), ((2)_{u^{\epsilon_1}}), (1,1)_{u^{\epsilon_2}}, ((1)_{0^+}, (1)_{u^{\epsilon_1}}) \mid u\in \{1,0\}, \ \epsilon_i \in \{+,-\},\ u^{\epsilon_1} \neq 0^+\}$ and $t^\epsilon,s^\delta\in \{0^-,1^+,1^-\}$ with $s^\delta\neq t^\epsilon $. 

We now proceed to analyze all possible values of the multiplicities described above.

\begin{lemma}\label{lemmaC_2}~ For the multiplicities $m_{((1)_{t^\epsilon})}$, we have:

\begin{enumerate}
  \item $m_{((1)_{0^-})}=1$ if and only if $C_{2,*} \in \textnormal{var}^*(A)$. 
    \item $m_{((1)_{1^+})}=1$ if and only if $C_2^{gr} \in \textnormal{var}^*(A)$. 
    \item $m_{((1)_{1^-})}=1$ if and only if $C_{2,*}^{gr} \in \textnormal{var}^*(A)$. 
\end{enumerate}
\end{lemma}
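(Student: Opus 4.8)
The plan is to establish all three equivalences in parallel, since they differ only in the homogeneous type of a single degree-one variable. By Table~\ref{tabela_mult}, the unique proper highest weight vectors of degree one are $x_{1,0}^-$, $x_{1,1}^+$ and $x_{1,1}^-$, associated respectively to $((1)_{0^-})$, $((1)_{1^+})$ and $((1)_{1^-})$, and by~\eqref{mult-0or1or2} each of these multiplicities is either $0$ or $1$. Hence, via Proposition~\ref{prop:nonzero_multiplicities}, proving $m_{((1)_{0^-})}=1$ amounts to proving $x_{1,0}^-\notin\textnormal{Id}^*(A)$, and analogously in the other two cases. The task therefore reduces to the equivalence between the non-triviality of the corresponding variable as a $*$-identity of $A$ and the membership of the corresponding $C_2$-algebra in $\textnormal{var}^*(A)$.

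For the implication $(\Leftarrow)$ I would argue uniformly using Remark~\ref{remark}. Assuming $C_{2,*}\in\textnormal{var}^*(A)$, Table~\ref{tabela_C2-c3} shows that the proper cocharacter of $C_{2,*}$ contains $\chi_{((1)_{0^-})}$ with multiplicity $1$; hence $1=\widetilde{m}_{((1)_{0^-})}(C_{2,*})\le m_{((1)_{0^-})}(A)$, which forces $m_{((1)_{0^-})}(A)=1$ by~\eqref{mult-0or1or2}. The cases of $C_2^{gr}$ and $C_{2,*}^{gr}$ are identical, using that their nonzero degree-one cocharacters in Table~\ref{tabela_C2-c3} are $\chi_{((1)_{1^+})}$ and $\chi_{((1)_{1^-})}$, respectively.

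The substantive direction is $(\Rightarrow)$. Suppose $m_{((1)_{0^-})}(A)=1$; then $x_{1,0}^-\notin\textnormal{Id}^*(A)$, so there is a nonzero $a\in A_0^-$. Since $A=F+J(A)$ is unitary with $1$ symmetric of degree $0$, the whole skew degree-zero component lies in the radical, so $a\in J(A)$, and, $A$ being finite-dimensional by our standing assumption (cf. Proposition~\ref{f+j}), $a$ is nilpotent. I would then consider the unital commutative $*$-subalgebra $F[a]\subseteq A$. The key observation is that each power satisfies $|a^{j}|\equiv j\,|a|\pmod 2$ and $(a^{j})^*=\pm a^{j}$ with a determined sign, so that every $Fa^{j}$ is simultaneously homogeneous for the grading and for $*$; consequently $I=\sum_{j\ge 2}Fa^{j}$ is a $*$-ideal of $F[a]$, and in the quotient $F[a]/I$ the image $\bar a$ is a nonzero square-zero element whose degree and symmetry coincide with those of $a$. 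Thus $F[a]/I$ is $*$-isomorphic to $C_{2,*}$ and, being a quotient of a $*$-subalgebra of $A$, lies in $\textnormal{var}^*(A)$. Choosing instead $a\in A_1^+=J(A)_1^+$ (resp. $a\in A_1^-=J(A)_1^-$) produces $C_2^{gr}$ (resp. $C_{2,*}^{gr}$) by the same construction.

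The main obstacle I anticipate is the bookkeeping in this last step: one must verify that the single surviving generator $\bar a$ of $F[a]/I$ carries exactly the homogeneous degree and the sign under $*$ demanded by the target algebra, and that $F[a]/I$ is genuinely two-dimensional, so that the claimed $*$-isomorphism is well defined. This is precisely where the three cases diverge, since $a$ skew of degree $0$, symmetric of degree $1$, and skew of degree $1$ reproduce the defining data of $C_{2,*}$, $C_2^{gr}$ and $C_{2,*}^{gr}$, respectively. Everything else—nilpotence of $a$, the fact that $I$ is a $*$-ideal, and closure of varieties under subalgebras and quotients—is routine.
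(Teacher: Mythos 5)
Your proposal is correct and follows essentially the same route as the paper: the converse via Remark~\ref{remark} and Table~\ref{tabela_C2-c3}, and the forward direction by producing a nonzero homogeneous radical element $a$ of the appropriate type and exhibiting the quotient of the unital $*$-subalgebra generated by $a$ modulo the ideal generated by $a^2$ as a copy of $C_{2,*}$, $C_2^{gr}$ or $C_{2,*}^{gr}$. The nilpotence/two-dimensionality bookkeeping you flag is handled implicitly in the paper as well, so there is no gap.
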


\begin{proof}
    Suppose that $m_{((1)_{u^\epsilon})}=1$ for some $u^\epsilon \in \{0^-,1^+,1^-\}$. By Proposition \ref{prop:non-zero_multiplicities} and Table \ref{tabela_C2-c3}, there exists a non-zero element $a \in J(A)_u^\epsilon$. Denote by $R$ the $*$-subalgebra of $A$ generated by $1_F$ and $a$, and let $I$ be the $*$-ideal of $R$ generated by $a^2$. Consider the map $\varphi: R/I\rightarrow C_2$ given by $${1_F}\mapsto e_{11}+e_{22} \quad \mbox{ and }\quad {a}\mapsto e_{12}.$$ Note that $\varphi$ defines an isomorphism of $*$-algebras in the following cases:
\begin{enumerate}
    \item[1.] $R/I \cong C_{2,*}$ if $u^\epsilon=0^-$;
    \item[2.]    $R/I \cong C_{2}^{gr}$ if $u^\epsilon=1^+$;
    \item[3.] $R/I \cong C_{2,*}^{gr}$ if $u^\epsilon=1^-$.
\end{enumerate}

Moreover, if $C_{2,*} \in \textnormal{var}^*(A)$ then, by Remark \ref{remark} and Table \ref{tabela_C2-c3}, $m_{((1)_{0^-})}=1$. Similarly, for  $C_{2}^{gr} \in \textnormal{var}^*(A)$ we have $m_{((1)_{1^+})}=1$ and if $C_{2,*}^{gr} \in \textnormal{var}^*(A)$ then $m_{((1)_{1^-})}=1$.
\end{proof}

\begin{lemma}\label{lemmaC_3}  ~ For the multiplicities $m_{((2)_{t^\epsilon})}$, we have:

\begin{enumerate}
 \item $m_{((2)_{0^-})} =1$ if and only if $C_{3,i_2} \in \textnormal{var}^*(A)$.
    \item $m_{((2)_{1^+})}=1$ if and only if $C_{3,i_1}^{gr} \in \textnormal{var}^*(A)$.
    \item $m_{((2)_{1^-})} =1$ if and only if $C_{3,i_3}^{gr} \in \textnormal{var}^*(A)$.  
\end{enumerate}
\end{lemma}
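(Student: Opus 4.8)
The plan is to mirror exactly the structure of the proof of Lemma~\ref{lemmaC_2}, since the statement here is the degree-two analogue concerning the multiplicities $m_{((2)_{t^\epsilon})}$ rather than $m_{((1)_{t^\epsilon})}$. First I would prove the forward implication. Suppose $m_{((2)_{u^\epsilon})}=1$ for some $u^\epsilon\in\{0^-,1^+,1^-\}$. By Proposition~\ref{prop:nonzero_multiplicities} together with Table~\ref{tabela_mult}, the nonvanishing of this multiplicity means that the proper h.w.v.\ $x_{1,u}^\epsilon\circ x_{2,u}^\epsilon=(x_{1,u}^\epsilon)(x_{2,u}^\epsilon)+(x_{2,u}^\epsilon)(x_{1,u}^\epsilon)$ associated to the partition $(2)_{u^\epsilon}$ is not a $*$-identity of $A$. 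Evaluating on a single variable $a\in J(A)_u^\epsilon$, this forces the existence of an element $a$ with $a^2\neq 0$ in the appropriate homogeneous component. The key structural fact is that $A=F+J(A)$ has quadratic codimension growth, so by the general nilpotency estimates this element satisfies $a^3=0$ (any triple product of radical elements must vanish, otherwise one would obtain a nonzero proper h.w.v.\ in degree $3$, contradicting $\Gamma_3^*\subseteq\textnormal{Id}^*(A)$ from Proposition~\ref{Gammancontidoemid}).

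Next I would exhibit the subalgebra generating $C_3$. Let $R$ be the $*$-subalgebra of $A$ generated by $1_F$ and $a$, and let $I$ be the $T_*$-ideal generated by $a^3$. The map $\varphi\colon R/I\to C_3$ defined by
\[
1_F\mapsto e_{11}+e_{22}+e_{33}\quad\text{and}\quad a\mapsto e_{12}+e_{23}
\]
should be shown to be an isomorphism of $*$-algebras, where the matching of superinvolutions is dictated by the homogeneous degree and parity $u^\epsilon$ of $a$: the case $u^\epsilon=0^-$ yields $R/I\cong C_{3,i_2}$, the case $u^\epsilon=1^+$ yields $R/I\cong C_{3,i_1}^{gr}$, and the case $u^\epsilon=1^-$ yields $R/I\cong C_{3,i_3}^{gr}$. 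One must check that $a,a^2$ are linearly independent modulo higher terms and that $\varphi$ respects both the $\mathbb{Z}_2$-grading and the superinvolution; the verification that $\varphi$ intertwines $*$ with the respective maps $i_1,i_2,i_3$ is the content of the distinction among the three cases, and is dictated by how $*$ acts on $a$ (hence on $a^2$, whose parity is the product of the parities of the factors). Since $C_{3,i_2},C_{3,i_1}^{gr},C_{3,i_3}^{gr}\in\textnormal{var}^*(A)$ whenever such an $a$ exists, this establishes the ``only if'' direction.

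For the converse, suppose the relevant algebra lies in $\textnormal{var}^*(A)$. Then by Remark~\ref{remark} together with Table~\ref{tabela_C2-c3}, the corresponding multiplicity of $A$ is bounded below by the multiplicity of that algebra. Since Table~\ref{tabela_C2-c3} records $m_{((2)_{0^-})}=1$ for $C_{3,i_2}$, $m_{((2)_{1^+})}=1$ for $C_{3,i_1}^{gr}$ and $m_{((2)_{1^-})}=1$ for $C_{3,i_3}^{gr}$, and since the bounds in~\eqref{mult-0or1or2} force $m_{((2)_{t^\epsilon})}\leq 1$, we conclude equality in each case. This gives the ``if'' direction and completes all three equivalences.

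I expect the main obstacle to be the verification that $\varphi$ is a well-defined $*$-isomorphism, specifically showing that $a^2\neq 0$ follows from $m_{((2)_{u^\epsilon})}=1$ rather than merely $a\neq 0$. The subtlety is that the partition $(2)$ (as opposed to $(1,1)$) corresponds precisely to the symmetric Jordan product $x\circ x$, so the nonvanishing of $m_{((2)_{u^\epsilon})}$ must be translated correctly into the statement $a^2\neq 0$; one must take care that the h.w.v.\ for the partition $(2)_{u^\epsilon}$ is the symmetrized product and not the commutator, which belongs to $(1,1)_{u^\epsilon}$ and governs a different algebra. Once this dictionary between Young diagrams and the symmetric-versus-skew products is set up correctly, the remaining computations are routine and parallel to Lemma~\ref{lemmaC_2}.
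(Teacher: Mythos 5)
Your proposal matches the paper's proof essentially step for step: translating $m_{((2)_{u^\epsilon})}=1$ via Proposition~\ref{prop:nonzero_multiplicities} and Table~\ref{tabela_mult} into the existence of $a\in J(A)_u^\epsilon$ with $a^2\neq 0$, forming the $*$-subalgebra $R$ generated by $1_F$ and $a$ modulo the ideal generated by $a^3$, mapping $1_F\mapsto e_{11}+e_{22}+e_{33}$, $a\mapsto e_{12}+e_{23}$, $a^2\mapsto e_{13}$ onto $C_{3,i_2}$, $C_{3,i_1}^{gr}$ or $C_{3,i_3}^{gr}$ according to $u^\epsilon$, and deducing the converse from Remark~\ref{remark} and Table~\ref{tabela_C2-c3}. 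The additional observations (that $a^3=0$ already follows from $\Gamma_3^*\subseteq\textnormal{Id}^*(A)$, and the care in distinguishing the partition $(2)$ from $(1,1)$) are correct and consistent with the paper.
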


\begin{proof}

Assume \( m_{((2)_{u^\epsilon})} =1 \), for some \( u^\epsilon \in \{0^-,1^+,1^-\} \). By Proposition \ref{prop:non-zero_multiplicities} and Table \ref{tabela_mult}, it follows that \( (x_{1,u}^\epsilon)^2 \notin \textnormal{Id}^*(A) \). Hence, there exists a non-zero element \( a \in J(A)_u^\epsilon \) such that \( a^2 \neq 0 \). Let \( R \) be the \(*\)-subalgebra of \( A \) generated by \( 1_F \) and \( a \), and let \( I \) denote the $*$-ideal of $R$ generated by \( a^3 \). The map $\varphi: R/I\rightarrow C_3$ given by 
$$1\mapsto e_{11}+e_{22}+e_{33} ,\quad a\mapsto e_{12}+e_{23} \quad \mbox{ and }\quad a^2\mapsto e_{13}$$
defines an isomorphism of $*$-algebras in the following  cases:
\begin{enumerate}
    \item[1.] if  \( u^\epsilon=0^- \)  then  \( R/I \cong C_{3,i_2} \).
    \item[2.]  if \( u^\epsilon=1^+ \) then \( R/I \cong C_{3,i_1}^{gr} \).
    \item[3.] if \( u^\epsilon=1^- \) then \( R/I \cong C_{3,i_3}^{gr} \). 
\end{enumerate}
The converse in each case follows from  Table \ref{tabela_C2-c3} and Remark \ref{remark}.
\end{proof}

\begin{lemma}\label{lemmaU_3,G_2}~ For the multiplicities $m_{((1,1)_{t^\epsilon})}$, we have:

    \begin{enumerate}
        \item $m_{((1,1)_{0^+})}=1 $ if and only if $U_{3,*} \in \textnormal{var}^*(A)$.
        \item $m_{((1,1)_{0^-})} =1 $ if and only if $\mathcal{G}_{2,\tau }\in \textnormal{var}^*(A)$.
        \item $m_{((1,1)_{1^+})}=1 $ if and only if $\mathcal{G}_{2,\psi}^{gr} \in \textnormal{var}^*(A)$.
        
        \item $m_{((1,1)_{1^-})} =1 $ if and only if $\mathcal{G}_{2,\tau }^{gr}\in \textnormal{var}^*(A)$.
    \end{enumerate}
\end{lemma}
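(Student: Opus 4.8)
The plan is to follow precisely the same structural template used in Lemmas~\ref{lemmaC_2} and~\ref{lemmaC_3}, exploiting the dictionary between a nonzero multiplicity and the presence of a distinguished generating $*$-algebra in the variety. For each of the four cases I would argue both implications separately, the forward direction by exhibiting an explicit quotient isomorphic to the target algebra, and the converse by invoking Remark~\ref{remark} together with the cocharacter tables.

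For the forward direction, suppose $m_{((1,1)_{t^\epsilon})}=1$. By Proposition~\ref{prop:nonzero_multiplicities} and the entries of Table~\ref{tabela_mult}, the associated proper h.w.v.\ (which is $[x_{1,0}^+,x_{2,0}^+]$ in case (1) and $[x_{1,t}^\epsilon,x_{2,t}^\epsilon]$ for $t^\epsilon\in\{0^-,1^+,1^-\}$ in cases (2)--(4)) is not a $*$-identity of $A$. Hence there exist nonzero elements $a,b\in J(A)_t^\epsilon$ with $[a,b]\neq 0$. The idea is then to form the $*$-subalgebra $R$ of $A$ generated by $1_F$, $a$ and $b$, pass to a suitable quotient $R/I$ (killing products of radical elements of length $\geq 2$, as dictated by the defining relations of $\mathcal{G}_2$ in Lemma~\ref{codim_G_2} and of $U_{3,*}$ in Lemma~\ref{T-ideal-U_and_N}), and define a map onto the target algebra by sending $1_F$ to the identity and $a,b$ to the two anticommuting generators $e_1,e_2$ (respectively to the Grassmann-type generators with the appropriate homogeneous degree and symmetry). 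One must check that this map respects the $\mathbb{Z}_2$-grading and intertwines the superinvolution on each side: for $t^\epsilon=0^-$ the skew degree-$0$ generators land in the $\tau$-skew part and $R/I\cong\mathcal{G}_{2,\tau}$; for $t^\epsilon=1^+$ the symmetric degree-$1$ generators give $R/I\cong\mathcal{G}_{2,\psi}^{gr}$; for $t^\epsilon=1^-$ the skew degree-$1$ generators give $R/I\cong\mathcal{G}_{2,\tau}^{gr}$; and for the commutator $[x_{1,0}^+,x_{2,0}^+]$ the two symmetric degree-$0$ generators produce $R/I\cong U_{3,*}$. Since $R/I$ is a homomorphic image of a subalgebra of $A$, it lies in $\textnormal{var}^*(A)$, yielding the claimed membership.

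The converse in each case is the easy half: if the relevant algebra ($U_{3,*}$, $\mathcal{G}_{2,\tau}$, $\mathcal{G}_{2,\psi}^{gr}$, or $\mathcal{G}_{2,\tau}^{gr}$) belongs to $\textnormal{var}^*(A)$, then by Remark~\ref{remark} its multiplicities are bounded above by those of $A$; consulting Tables~\ref{tabela_U3-N3} and~\ref{tableG_2}, each such algebra already has $m_{((1,1)_{t^\epsilon})}=1$ in the corresponding slot, forcing the same multiplicity to be nonzero for $A$, whence $m_{((1,1)_{t^\epsilon})}=1$ by the bound~\eqref{mult-0or1or2}.

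I expect the main obstacle to be the verification in the forward direction that the map $\varphi\colon R/I\to\mathcal{G}_2$ (with the prescribed grading and superinvolution) is well defined as a $*$-homomorphism --- in particular, confirming that the anticommutation relation $[a,b]\neq 0$ together with the vanishing of the squares $a^2,b^2$ in $R/I$ exactly matches the defining relations of $\mathcal{G}_2$, and that the superinvolution on $R/I$ inherited from $A$ sends $a,b$ to the correct signs under $\tau,\psi$ or $\gamma$. This is delicate because, unlike the one-generator situations of Lemmas~\ref{lemmaC_2} and~\ref{lemmaC_3}, here two radical generators interact, so one must rule out unwanted products surviving in the quotient. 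In practice this is handled by choosing $I$ to be generated by $a^2$, $b^2$ and $a\circ b$ (or the appropriate products dictated by the $T_*$-ideal descriptions in Lemma~\ref{codim_G_2}), after which the multihomogeneity of $T_*$-ideals guarantees that no non-identity relation is introduced; the remaining checks are routine and will be omitted in the final write-up, as announced in the paragraph following Lemma~\ref{T-ideal-U_and_N}.
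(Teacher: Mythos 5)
Your converse direction is exactly the paper's (Remark~\ref{remark} plus the cocharacter tables), but your forward direction takes a genuinely different route, and the step you defer as ``routine'' is in fact the crux, where a real gap sits. The paper never builds a subquotient isomorphic to $\mathcal{G}_{2,\psi}^{gr}$ or $\mathcal{G}_{2,\tau}^{gr}$: for items (3) and (4) it proves $\textnormal{Id}^*(A)\subseteq \textnormal{Id}^*(\mathcal{G}_{2,\psi}^{gr})$ directly, degree by degree, the only nontrivial point being a candidate identity $f=\alpha x_{1,1}^+x_{2,1}^+ + \beta[x_{1,1}^+,x_{2,1}^+]$ of $A$; applying the superinvolution to $f$ produces a second relation which, combined with the first, forces $f$ to be a scalar multiple of $x_{1,1}^+\circ x_{2,1}^+$, already an identity of $\mathcal{G}_{2,\psi}^{gr}$. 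Items (1) and (2) are simply quoted from \cite[Lemmas 4.3 and 4.4]{Wesley}. Your construction-based route can be made to work, but not with the justification you give: ``multihomogeneity of $T_*$-ideals'' controls polynomial identities, not relations among the \emph{specific} elements $a,b\in J(A)$, so it does not guarantee that $\overline{ab}\neq 0$ in $R/I$. What actually saves cases (2)--(4) is a parity argument you never state: there $a,b$ lie in $J(A)_0^-$ or $J(A)_1^{\pm}$, so $\Gamma_3^*\subseteq\textnormal{Id}^*(A)$ kills all length-three products, giving $I=\operatorname{span}_F\{a^2,b^2,a\circ b\}$, and this span sits in the symmetric (resp.\ skew) part of $A_0$ while $[a,b]\neq 0$ sits in the opposite part; hence $[a,b]\notin I$ and $\overline{ab}=\tfrac12\overline{[a,b]}\neq 0$.

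In case (1) the gap is genuine as written. First, $U_{3,*}$ is five-dimensional (the radical contains the two generators together with the linearly independent products $e_{13}$ and $e_{46}$), so ``$R/I\cong U_{3,*}$'' cannot hold; at best $R/I$ is a four-dimensional algebra that is only $T_*$-equivalent to $U_{3,*}$, which must be argued separately. Second, and more seriously, here $a,b\in J(A)_0^+$ and monomials in symmetric degree-zero variables are \emph{not} proper, so quadratic growth does not kill length-three products: $I$ may contain elements of arbitrary length and $I\not\subseteq A_0^+$, so the parity argument alone no longer rules out $[a,b]\in I$. One can close this (e.g.\ every skew element of $J(R)^{\geq 3}$ lies in $J(R)\cdot[a,b]$ because $[a,b]$ is central by $\Gamma_3^*\subseteq\textnormal{Id}^*(A)$, so $[a,b]\in I$ would give $[a,b]=j[a,b]$ with $j$ nilpotent, forcing $[a,b]=0$; alternatively, work with generic elements in the relatively free algebra of $\textnormal{var}^*(A)$, where multihomogeneity genuinely applies), but some such argument must be supplied --- it is precisely the content the paper outsources to \cite{Wesley}, and it is not covered by the ``omitted as routine'' clause, which in the paper refers only to checking that explicit highest weight vectors are non-identities of explicitly given algebras.
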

\begin{proof}
    Assume that $m_{((1,1)_{0^\epsilon})}=1$, for some $\epsilon \in \{+,-\}$. By Proposition \ref{prop:non-zero_multiplicities} we have $[x_{1,0}^+,x_{2,0}^+]\not\equiv 0$ or $[x_{1,0}^-,x_{2,0}^-]\not\equiv 0$ on $A$ and then,  by \cite[Lemmas 4.3 and 4.4]{Wesley}, we have that
    
    \begin{enumerate}
        \item[1.] if $\epsilon=+$ then $U_{3,*}\in \textnormal{var}^*(A)$;
        \item[2.] if $\epsilon=-$ then $\mathcal{G}_{2,\tau} \in \textnormal{var}^*(A)$.
    \end{enumerate}
    
    Suppose now that $m_{((1,1)_{1^+})}=1$. By Proposition \ref{prop:non-zero_multiplicities} we get $[x_{1,1}^+,x_{2,1}^+] \not\equiv  0$ on $A$. 
    Since $A$ and $\mathcal{G}_{2,\psi}^{gr}$ are unitary $*$-algebras with quadratic codimension growth, we have $$\Gamma_n^*=\Gamma_n^* \cap\textnormal{Id}^*(A)=\Gamma_n^*\cap \textnormal{Id}^*(\mathcal{G}_{2,\psi}^{gr}), \quad \textnormal{for all} \quad n\geq 3.$$ 
    Since $x_{1,0}^+,x_{1,1}^+ \notin \textnormal{Id}^*(\mathcal{G}_{2,\psi}^{gr})$ and $[x_{1,1}^+,x_{2,1}^+] \neq 0$ on $A$, we have $x_{1,0}^+,x_{1,1}^+\notin \textnormal{Id}^*(A) \cap \textnormal{Id}^*(\mathcal{G}_{2,\psi}^{gr})$ and so $$\Gamma_1^*\cap \textnormal{Id}^*(A)\subseteq \Gamma_1^* \cap \textnormal{Id}^*(\mathcal{G}_{2,\psi}^{gr}).$$
    
   Finally, let $f \in \Gamma_2^* \cap\textnormal{Id}^*(A)$ be a multihomogeneous identity of $A$. By Lemma \ref{codim_G_2}, the polynomials 
\[
    x_{1,0}^-, \quad x_{1,1}^-, \quad [x_{1,0}^+,x_{2,0}^+], \quad [x_{1,0}^+,x_{1,1}^+] \quad x_{1,1}^+\circ x_{2,1}^+, \quad x_{1,1}^+x_{2,1}^+x_{3,1}^+
\]
are identities of $\mathcal{G}_{2,\psi}^{gr}$. Consequently, we have either
\[ f\in \textnormal{Id}^*(\mathcal{G}_{2,\psi}^{gr})\quad \mbox{ or }\quad 
    f =\alpha x_{1,1}^+x_{2,1}^+ + \beta[x_{1,1}^+,x_{2,1}^+],
\] where $\alpha$ and $\beta$ are non-zero. In the second case, we obtain
\[
    \gamma x_{1,1}^+x_{2,1}^+ - x_{2,1}^+x_{1,1}^+\equiv 0 \mbox{ on }A, \quad \text{where } \gamma = \frac{\alpha+\beta}{\beta}.
\]
Applying the superinvolution on the polynomial above, we obtain $-\gamma x_{2,1}^+x_{1,1}^+ +x_{1,1}^+x_{2,1}^+\equiv 0$ on $A$. Combining both relations, we obtain $(1-\gamma^2) x_{1,1}^+x_{2,1}^+ \equiv 0$ on $A$. Since $x_{1,1}^+x_{2,1}^+ \not\equiv 0$ on $A$, then $\gamma^2=1$. We analyze the two possible cases for $\gamma$:
\begin{enumerate}
    \item[1.] If $\gamma=-1$, then we have  $f=-
    \beta x_{1,1}^+ \circ x_{2,1}^+ $ and so $f\in \textnormal{Id}^*(\mathcal{G}_{2,\psi}^{gr})$.
    \item[2.] If $\gamma=1$, then $\alpha =0$, a contradiction.
\end{enumerate}

Hence, we conclude that $f \in \textnormal{Id}^*(\mathcal{G}_{2,\psi}^{gr})$. Therefore, $\Gamma_2^{*}\cap\textnormal{Id}^*(A) \subseteq \Gamma_2^{*} \cap \textnormal{Id}^*(\mathcal{G}_{2,\psi}^{gr})$ and so $\mathcal{G}_{2,\psi}^{gr}\in \textnormal{var}^*(A)$.
    
   Similarly, if $m_{((1,1)_{1^-})}=1$, we can use Lemma \ref{codim_G_2} and the previous argument to prove that $\mathcal{G}_{2,\tau}^{gr}\in \textnormal{var}^*(A)$.

   The converse follows by Remark \ref{remark} and Table \ref{tableG_2}.
\end{proof}

\begin{lemma}\label{lemmaN_3,U_3}~ For the multiplicities $m_{((1)_{0^+},(1)_{t^\epsilon})}$, we have:

\begin{enumerate}
    \item  $m_{((1)_{0^+},(1)_{1^+})} =1$ if and only if $U_{3,*}^{gri} \in \textnormal{var}^*(A)$.
    \item  $m_{((1)_{0^+},(1)_{0^-})}=1$ if and only if  $N_{3,*} \in \textnormal{var}^*(A)$.
    \item  $m_{((1)_{0^+},(1)_{1^-})}=1$ if and only if $N_{3,*}^{gri} \in \textnormal{var}^*(A)$.
\end{enumerate}
\end{lemma}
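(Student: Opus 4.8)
The plan is to handle the three items by a single argument, following the cocharacter/$T_*$-ideal comparison already used for the case $m_{((1,1)_{1^+})}=1$ in Lemma~\ref{lemmaU_3,G_2}. In each item write $B$ for the distinguished $*$-algebra ($U_{3,*}^{gri}$, $N_{3,*}$, $N_{3,*}^{gri}$ respectively); by Lemma~\ref{T-ideal-U_and_N} each such $B$ is unitary of quadratic codimension growth, with a known $T_*$-ideal and a complete list of nonzero proper cocharacters, and satisfies $m_{((1)_{0^+},(1)_{t^\epsilon})}(B)=1$ for the relevant $t^\epsilon\in\{1^+,0^-,1^-\}$. The converse implication is then immediate: if $B\in\textnormal{var}^*(A)$, Remark~\ref{remark} yields $m_{((1)_{0^+},(1)_{t^\epsilon})}(A)\ge m_{((1)_{0^+},(1)_{t^\epsilon})}(B)=1$, which together with the bound~\eqref{mult-0or1or2} forces equality.

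For the forward implication, suppose $m_{((1)_{0^+},(1)_{t^\epsilon})}(A)=1$. By Proposition~\ref{prop:nonzero_multiplicities} the h.w.v. $[x_{1,0}^+,x_{2,t}^\epsilon]$ is not a $*$-identity of $A$; since $A=F+J(A)$ and the unit is central, there exist homogeneous $a\in J(A)_0^+$ and $b\in J(A)_t^\epsilon$ with $[a,b]\neq0$. The decisive computation is the symmetry of this commutator: writing $b^*=\epsilon b$ with $\epsilon=\pm1$ and using $|a|=0$, one finds $(ab)^*=\epsilon\,ba$ and $(ba)^*=\epsilon\,ab$, so that $[a,b]^*=-\epsilon\,[a,b]$. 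Hence $[a,b]$ is a nonzero homogeneous element of degree $|b|$ whose symmetry is opposite to that of $b$; concretely it lies in $J(A)_1^-$ when $t^\epsilon=1^+$ and in $J(A)_1^+$ when $t^\epsilon=1^-$. Combined with $b\neq0$, this shows that every proper h.w.v. appearing with nonzero multiplicity in $B$ fails to be an identity of $A$: these are $x_{1,1}^+,x_{1,1}^-,[x_{1,0}^+,x_{2,1}^+]$ for $U_{3,*}^{gri}$; $x_{1,0}^-,[x_{1,0}^+,x_{2,0}^-]$ for $N_{3,*}$; and $x_{1,1}^+,x_{1,1}^-,[x_{1,0}^+,x_{2,1}^-]$ for $N_{3,*}^{gri}$.

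It remains to upgrade this to $\textnormal{Id}^*(A)\subseteq\textnormal{Id}^*(B)$, i.e. $B\in\textnormal{var}^*(A)$. By Proposition~\ref{Gammancontidoemid} and the quadratic growth of $B$, $\Gamma_n^*\subseteq\textnormal{Id}^*(B)$ for all $n\ge3$; since $\textnormal{Id}^*(A)$ is generated as a $T_*$-ideal by its proper multilinear polynomials, it suffices to verify componentwise, in the decomposition~\eqref{Gamma_n}, that $\Gamma_1^*\cap\textnormal{Id}^*(A)$ and $\Gamma_2^*\cap\textnormal{Id}^*(A)$ lie in $\textnormal{Id}^*(B)$. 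On each multihomogeneous component where the proper cocharacter of $B$ vanishes the whole component is contained in $\textnormal{Id}^*(B)$, so the inclusion is automatic; on each of the finitely many components carrying a nonzero cocharacter of $B$, Table~\ref{tabela_mult} shows the component is one-dimensional, spanned by exactly the h.w.v. listed above, which by the previous paragraph is not an identity of $A$, so it meets $\textnormal{Id}^*(A)$ trivially. Therefore $\Gamma_n^*\cap\textnormal{Id}^*(A)\subseteq\textnormal{Id}^*(B)$ for every $n$, and the forward implication follows.

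I expect the main obstacle to be the degree-$2$ bookkeeping in the last step: one must ensure that \emph{every} multihomogeneous component distinct from the distinguished $\Gamma_{(1_{0^+},1_{t^\epsilon})}$ genuinely lies in $\textnormal{Id}^*(B)$ — in particular $\Gamma_{(2_{0^+})}$, the components $\Gamma_{(2_{s^\delta})}$, and those spanned by products of two degree-$1$ variables — and this is precisely where the completeness of the cocharacter/$T_*$-ideal description of $B$ in Lemma~\ref{T-ideal-U_and_N} is indispensable. A lesser subtlety is that for $t^\epsilon\in\{1^+,1^-\}$ the companion non-identity $x_{1,1}^{\mp}$ comes only from the symmetry of $[a,b]$, whereas for $t^\epsilon=0^-$ the bare inequality $b\neq0$ already delivers $x_{1,0}^-\notin\textnormal{Id}^*(A)$.
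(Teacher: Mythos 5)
Your proof is correct and follows essentially the same route as the paper: the converse via Remark~\ref{remark}, and the forward direction by showing $\Gamma_n^*\cap\textnormal{Id}^*(A)\subseteq\textnormal{Id}^*(B)$ for $n=1,2$ using the one-dimensionality of the relevant multihomogeneous components and the fact that their spanning h.w.v.'s are non-identities of $A$ (your explicit computation $[a,b]^*=-\epsilon[a,b]$ is exactly the justification the paper leaves implicit when asserting $x_{1,1}^+,x_{1,1}^-\notin\textnormal{Id}^*(A)$). The only difference is that the paper disposes of item (2) by citing an external lemma, whereas you treat all three cases uniformly; this is a harmless and arguably cleaner variation.
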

\begin{proof}
    Observe that if $m_{((1)_{0^+}, (1)_{0^-})}=1$ then, by Proposition \ref{prop:non-zero_multiplicities}, we have $[x_{1,0}^+,x_{2,0}^-]\notin \textnormal{Id}^* (A)$. In this case, by \cite[Lemmas 4.3]{Wesley}, we obtain  $N_{3,*}\in \textnormal{var}^*(A)$.
    
    Now, we assume that $m_{((1)_{0^+}, (1)_{1^+})}=1$. By Proposition \ref{prop:non-zero_multiplicities} we have $[x_{1,0}^+,x_{2,1}^+]\notin \textnormal{Id}^* (A)$. Since $U_{3,*}^{gri}$ and $A$ are unitary $*$-algebras with quadratic codimension growth, we have
    $$\Gamma_{n}= \Gamma_{n}\cap \textnormal{Id}^*(A) = \Gamma_n^{*} \cap \textnormal{Id}^*(U_{3,*}^{gri}), \quad \textnormal{for all} \quad n\geq 3. $$
    Furthermore,  since $[x_{1,0}^+,x_{2,1}^+]\notin \textnormal{Id}^* (A)$ then $x_{1,1}^+,x_{1,1}^- \notin \textnormal{Id}^*(A)$. Since $x_{1,0}^-
    \in \textnormal{Id}^*(U_{3,*}^{gri})$ then
    $$ \Gamma_1^{*} \cap \textnormal{Id}^*(A) \subseteq \Gamma_1^{*}\cap \textnormal{Id}^*(U_{3,*}^{gri}). $$
   
    Therefore, we just need to analyze the proper multilinear identities of degree $2$ of $A$. Let $f \in \Gamma_2^* \cap\textnormal{Id}^*(A)$ be a multihomogeneous identity. By Lemma \ref{T-ideal-U_and_N},  the polynomials 
    $$x_{1,0}^-, \quad x_{1,1}^+x_{2,1}^+, \quad x_{1,1}^-x_{2,1}^-, \quad x_{1,1}^+x_{2,1}^-, \quad [x_{1,1}^-,x_{2,0}^+]$$
    are identities of $U_{3,*}^{gri}$. Then, either $f\in \textnormal{Id}^*(U_{3,*}^{gri})$ or $f \in \Gamma_{(1_{0^+}, 1_{1^+})}$. Since $\Gamma_{(1_{0^+}, 1_{1^+})}$ is linearly generated by $[x_{1,0}^+,x_{2,1}^+]$ and it is a non-identity of $A$, then we have $f\in \textnormal{Id}^*(U_{3,*}^{gri})$. Therefore, $ \Gamma_2^{*}\cap\textnormal{Id}^*(A) \subseteq \Gamma_2^{*} \cap \textnormal{Id}^*(U_{3,*}^{gri}) $ and so $U_{3,*}^{gri}\in \textnormal{var}^*(A)$. 
    
   Similarly, if $m_{((1)_{0^+},(1)_{1^-})} =1$, we can use Lemma \ref{T-ideal-U_and_N} and the previous argument to prove that $N_{3,*}^{gri}\in \textnormal{var}^*(A)$.

   The converse follows by Remark \ref{remark} and Table \ref{tabela_U3-N3}.
\end{proof}

\begin{lemma}\label{lemmaG_2,W}~ For the multiplicities $m_{(1)_{1^+},((1)_{0^-})},m_{((1)_{1^+},(1)_{1^-})}$ and $m_{((1)_{0^-},(1)_{1^-})}$, we have:

\begin{enumerate}

    \item  $m_{((1)_{0^-},(1)_{1^+})}=1$ then  $\mathcal{G}_{2,\gamma}^{gri} \in \textnormal{var}^*(A)$ or $W_{\eta_1}^{gri} \in \textnormal{var}^*(A)$.
    \item  $m_{((1)_{1^+},(1)_{1^-})}=1$ then $\mathcal{G}_{2,\gamma}^{gr} \in \textnormal{var}^*(A)$ or $W_{\eta_2}^{gr~} \in \textnormal{var}^*(A)$.
    \item  $m_{((1)_{0^-},(1)_{1^-})}= 1$ then $\mathcal{G}_{2,\tau}^{gri} \in \textnormal{var}^*(A)$ or $W_{\eta_3}^{gri} \in \textnormal{var}^*(A)$.
\end{enumerate}
\end{lemma}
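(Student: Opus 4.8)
The plan is to treat the three items by one uniform argument, since in each the multiplicity $m_{((1)_{s^\delta},(1)_{t^\epsilon})}$ controls the two-dimensional space $\Gamma_{(1_{s^\delta},1_{t^\epsilon})}$, which by Table~\ref{tabela_mult} is spanned by the product $x_{1,s}^\delta x_{2,t}^\epsilon$ and the commutator $[x_{1,s}^\delta,x_{2,t}^\epsilon]$. For the forward implication I would assume $m_{((1)_{s^\delta},(1)_{t^\epsilon})}=1$ and apply Proposition~\ref{prop:nonzero_multiplicities} to produce a non-identity highest weight vector; evaluating it yields nonzero homogeneous elements $a\in J(A)_s^\delta$ and $b\in J(A)_t^\epsilon$ with $ab\neq 0$ (relabelling the two factors if it is the reversed product that survives).

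The next step uses that the multiplicity equals $1$ and not $2$: the product and the commutator must then be linearly dependent modulo $\textnormal{Id}^*(A)$, so $(\alpha+\beta)x_{1,s}^\delta x_{2,t}^\epsilon-\beta x_{2,t}^\epsilon x_{1,s}^\delta\equiv 0$ on $A$ for some $(\alpha,\beta)\neq(0,0)$. Applying the superinvolution, which carries $x_{1,s}^\delta x_{2,t}^\epsilon$ to $\pm x_{2,t}^\epsilon x_{1,s}^\delta$, gives a second relation in the same two monomials, exactly as in the proof of Lemma~\ref{lemmaU_3,G_2}. The associated $2\times 2$ system must be singular, for otherwise both monomials would lie in $\textnormal{Id}^*(A)$ and $m$ would be $0$; its degeneracy forces precisely one of the alternatives $[x_{1,s}^\delta,x_{2,t}^\epsilon]\equiv 0$ (the commuting case) or $x_{1,s}^\delta\circ x_{2,t}^\epsilon\equiv 0$ (the anticommuting case). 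This dichotomy is the heart of the argument.

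In either case I would mimic Lemmas~\ref{lemmaC_2} and~\ref{lemmaC_3}: let $R$ be the unital $*$-subalgebra of $A$ generated by $a$ and $b$, and let $I$ be the $T_*$-ideal generated by $a^2$ and $b^2$. Since $a^2=b^2=0$ in $R/I$ and the generators already commute or anticommute in $A$, the radical of $R/I$ is spanned by $a$, $b$ and $ab$, so $R/I$ is four-dimensional. In the commuting case the assignment sending $1$ to the identity of $W$, the generators $a,b$ to $e_{12}+e_{34}$ and $e_{13}+e_{24}$, and $ab$ to $e_{14}$ (with the two middle images ordered to match the homogeneous degrees and symmetries of $a$ and $b$) is a $*$-isomorphism onto $W_{\eta_2}^{gr}$, $W_{\eta_1}^{gri}$ or $W_{\eta_3}^{gri}$; in the anticommuting case the analogous assignment $a,b\mapsto e_1,e_2$ and $ab\mapsto e_1e_2$ gives $R/I\cong\mathcal{G}_{2,\gamma}^{gr}$, $\mathcal{G}_{2,\gamma}^{gri}$ or $\mathcal{G}_{2,\tau}^{gri}$. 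Crucially, the grading and the value of $*$ on $ab$ are not free: they are forced by $|ab|=|a|+|b|$ and $(ab)^*=(-1)^{|a||b|}b^*a^*$, and one checks in each case that they coincide with the prescribed data of the target algebra. As $R/I\in\textnormal{var}^*(A)$, the desired membership follows. The converse follows from Remark~\ref{remark} together with Tables~\ref{tableG_2} and~\ref{table_W}, each listed algebra contributing a single copy of the relevant mixed irreducible; the value $m=2$ is the complementary situation in which both algebras lie in the variety.

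The step I expect to be the main obstacle is the identification of $R/I$ with the prescribed member of $\{\mathcal{G},W\}$ as a $*$-algebra. One must check that no unexpected products survive in the quotient, so that its radical is exactly $\{a,b,ab\}$; that $ab\notin I$, which holds because $ab$ is a non-identity of $A$; and, most delicately, that the induced $\mathbb{Z}_2$-grading and superinvolution on $R/I$ agree with those defining the target. It is exactly the commuting-versus-anticommuting alternative produced by the superinvolution computation that separates the $W$-case from the $\mathcal{G}$-case and determines which algebra appears.
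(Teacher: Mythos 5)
Your proposal is correct and follows essentially the same route as the paper's proof: Proposition~\ref{prop:nonzero_multiplicities} together with the superinvolution computation forces the dichotomy (commutator identity versus Jordan-product identity), the quotient $R/I$ of the unital $*$-subalgebra generated by $a,b$ modulo the ideal generated by $a^2,b^2$ is then identified with the corresponding $W$-type or $\mathcal{G}_2$-type algebra, and the converse comes from Remark~\ref{remark} and Tables~\ref{tableG_2} and~\ref{table_W}. The only slip is your stated reason for $ab\notin I$: that $ab$ is a non-identity only gives $ab\neq 0$ for a suitable evaluation, not $ab\notin \operatorname{span}_F\{a^2,b^2\}$; the paper instead uses that $I=\operatorname{span}_F\{a^2,b^2\}\subseteq A_0$ while $ab\in A_1$ in the mixed-degree cases, and in case~(2), where $ab$ is even, one must argue via the symmetry type of $ab$ relative to that of $a^2$ and $b^2$ under $*$.
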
 
\begin{proof} 
    Assume $m_{((1)_{0^-},(1)_{1^+})}= 1$. Since $x_{1,0}^-x_{2,1}^+$ and $[x_{1,0}^-,x_{2,1}^+]$ are the proper h.w.v.'s associated to the multipartition $((1)_{0^-},(1)_{1^+})$ of $2$, then, by Proposition \ref{prop:non-zero_multiplicities}, there exists $(\alpha,\beta)\neq (0,0)$ such that $$\alpha x_{1,0}^-x_{2,1}^++ \beta[x_{1,0}^-,x_{2,1}^+]\equiv 0.$$ Moreover, since $*$ is a superinvolution, if $x_{1,0}^-x_{2,1}^+\equiv 0$ then $x_{2,1}^+x_{1,0}^-\equiv 0$, therefore, only the following cases can occur:
   \begin{enumerate}
        \item[1.]  $\alpha= 0$ and $\beta\neq 0$;
        \item[2.]  $\alpha \neq 0$ and $\beta\neq0$.
    \end{enumerate} 
    
    In the first case, note that since $x_{1,0}^-x_{2,1}^+ \not\equiv 0$ and $[x_{1,0}^-,x_{2,1}^+]\equiv0$ then there exist $a\in J(A)_{0}^-$ and $b\in J(A)_{1}^+$ such that $ab \neq 0$ and $ab=ba$. Let $R$ be the $*$-subalgebra of $A$ generated by $1_F, a, b$ and $I$ the $*$-ideal of $R$ generated by $a^2$ and $b^2$. Since $A$ has quadratic codimension growth, we have $\Gamma_{n}^{*} \subseteq \textnormal{Id}^*(A)$, for all $n\geq3$ and then $I$ is linearly generated by $a^2$ and $b^2$. 
    Moreover, since $I \subseteq A_0$ then $ab \notin I$ and so $a,b, ab \notin I$. Now, we observe that the linear map $\varphi: R/I \rightarrow W$ given by
    $$\overline{1_F} \mapsto e_{11}+\cdots +e_{44}, \quad \overline{a} \mapsto e_{12}+e_{34}, \quad \overline{b}\mapsto e_{13}+e_{24}, \quad \overline{ab} \mapsto e_{14}$$ defines an $*$-isomorphism between $R/I$  and $W_{\eta_1}^{gri}$.

    In the second case, we have $\gamma x_{1,0}^-x_{2,1}^+\equiv x_{2,1}^+x_{1,0}^-$ on $A$, where $\gamma=\frac{\alpha+\beta}{\beta}$. Since $*$ is a superinvolution, it follows that
  $$(\gamma^2-1) x_{1,0}^-x_{2,1}^+\equiv 0\mbox{ on }A,$$ thus $\gamma^2=1$. Since $\alpha \neq 0$ then $\gamma=-1$. Therefore $  x_{1,0}^-x_{2,1}^+ +x_{2,1}^+x_{1,0}^- \equiv 0$ on $A$. 
  
  Consider $a\in J(A)_0^-$ and $b\in J(A)_{1}^+$ such that $ab\neq 0$. Since $  x_{1,0}^-x_{2,1}^+ +x_{2,1}^+x_{1,0}^- \equiv 0$ on $A$, then $ab= -ba$. Let $R$ be the $*$-subalgebra of $A$ generated by $1_F,a,b$ and $I$ the $*$-ideal of $R$ generated by $a^2$ and $b^2$. Hence, $R/I$ is linearly generated by the non-zero elements $\overline{1_F}$, $\overline{a}$, $\overline{b}$ and $\overline{ab}$ satisfying $\overline{a}^2=\overline{b}^2=\overline{ab}+\overline{ba}=0$. Consider the linear map $\phi: R/I \rightarrow \mathcal{G}_2$ given by
    $$1_F \mapsto 1, \quad a\mapsto e_1, \quad b\mapsto e_2, \quad ab \mapsto e_1e_2.$$ It is straightforward to see that $\phi$ is an isomorphism of $*$-algebras between $R/I$ and $\mathcal{G}_{2,\gamma}^{gri}$. 

    In a similar way, we can prove the items 2) and 3).
\end{proof}

Recall that, according to~\eqref{mult-0or1or2}, the multiplicities considered in the previous lemma may also assume the value~$2$. In this case, we obtain the following result.

\begin{lemma}\label{lemma_mult2} ~ For the multiplicities $m_{((1)_{0^-}, (1)_{1^+})},m_{((1)_{1^+},(1)_{1^-})}$ and $m_{((1)_{0^-},(1)_{1^-})}$, we have:

\begin{enumerate}
    \item  $m_{((1)_{0^-},(1)_{1^+})}= 2$ if and only if $\mathcal{G}_{2,\gamma}^{gri} \oplus W_{\eta_1}^{gri} \in \textnormal{var}^*(A)$. 
    
    \item  $m_{((1)_{1^+},(1)_{1^-})}=2$ if and only if $\mathcal{G}_{2,\gamma}^{gr} \oplus W_{\eta_2}^{gr} \in \textnormal{var}^*(A)$.
    
    \item  $m_{((1)_{0^-},(1)_{1^-})}=2$ if and only if $\mathcal{G}_{2,\tau}^{gri} \oplus W_{\eta_3}^{gri} \in \textnormal{var}^*(A)$.
\end{enumerate}
\end{lemma}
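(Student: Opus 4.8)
The plan is to prove the three items by the same argument, so I will concentrate on item (1); items (2) and (3) differ only in the variable types and target summands. For the ``if'' direction I argue as in the converse parts of the preceding lemmas: if $\mathcal{G}_{2,\gamma}^{gri}\oplus W_{\eta_1}^{gri}\in\textnormal{var}^*(A)$, then Remark~\ref{remark} gives $\widetilde m_{((1)_{1^+},(1)_{0^-})}\le m_{((1)_{0^-},(1)_{1^+})}$, where $\widetilde m$ denotes the multiplicity of the direct sum. Since Table~\ref{table_sum} in Lemma~\ref{codim_G2_sum_W} records $\widetilde m_{((1)_{1^+},(1)_{0^-})}=2$, we obtain $m_{((1)_{0^-},(1)_{1^+})}\ge 2$, and combining this with the upper bound in~\eqref{mult-0or1or2} forces $m_{((1)_{0^-},(1)_{1^+})}=2$.

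The decisive remark for the ``only if'' direction is that $\textnormal{var}^*(A)=\{B:\textnormal{Id}^*(A)\subseteq\textnormal{Id}^*(B)\}$ is closed under finite direct products, because $\textnormal{Id}^*(B_1\times B_2)=\textnormal{Id}^*(B_1)\cap\textnormal{Id}^*(B_2)$. Hence $\mathcal{G}_{2,\gamma}^{gri}\oplus W_{\eta_1}^{gri}\in\textnormal{var}^*(A)$ holds if and only if both summands lie in $\textnormal{var}^*(A)$, and it suffices to realize each one separately. Assuming $m_{((1)_{0^-},(1)_{1^+})}=2$ and recalling that $\Gamma_{(1_{0^-},1_{1^+})}$ is two-dimensional, the hypothesis says the projection $\Gamma_{(1_{0^-},1_{1^+})}\to\Gamma_{(1_{0^-},1_{1^+})}(A)$ is injective; thus no nonzero element of $\Gamma_{(1_{0^-},1_{1^+})}$ is an identity of $A$, and in particular both $[x_{1,0}^-,x_{2,1}^+]$ and the Jordan product $x_{1,0}^-\circ x_{2,1}^+$ are non-identities by Proposition~\ref{prop:nonzero_multiplicities} (note $x_{1,0}^-\circ x_{2,1}^+=2\,x_{1,0}^-x_{2,1}^+-[x_{1,0}^-,x_{2,1}^+]\in\Gamma_{(1_{0^-},1_{1^+})}$).

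I would then run two parallel subalgebra constructions in the spirit of Lemma~\ref{lemmaG_2,W}, but now killing the ``opposite'' homogeneous part of the product by hand, since no global commuting/anticommuting relation is available. To produce $W_{\eta_1}^{gri}$, choose $a\in J(A)_0^-$ and $b\in J(A)_1^+$ with $a\circ b\neq 0$, set $R=\langle 1,a,b\rangle$, and let $I$ be the $*$-ideal generated by the $*$-homogeneous elements $a^2,\,b^2,\,[a,b]$; quadratic growth gives $\Gamma_n^*\subseteq\textnormal{Id}^*(A)$ for $n\ge 3$ (Proposition~\ref{Gammancontidoemid}), so every triple product of $a,b$ vanishes and $R/I=\spam_F\{1,a,b,ab\}$ with $ab=ba\equiv\tfrac12\,a\circ b\neq 0$ of homogeneous skew degree one. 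The assignment $\overline a\mapsto e_{12}+e_{34}$, $\overline b\mapsto e_{13}+e_{24}$, $\overline{ab}\mapsto e_{14}$ is then a $*$-isomorphism $R/I\cong W_{\eta_1}^{gri}$. Symmetrically, picking $a',b'$ with $[a',b']\neq 0$ and dividing by the $*$-ideal generated by $(a')^2,(b')^2,a'\circ b'$ yields $a'b'=-b'a'\equiv\tfrac12[a',b']\neq 0$, symmetric of degree one, and $R'/I'\cong\mathcal{G}_{2,\gamma}^{gri}$. Being quotients of subalgebras of $A$, both summands lie in $\textnormal{var}^*(A)$, hence so does their direct sum.

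The main obstacle — the only place requiring genuine care — is checking that $I$ is a proper $*$-ideal whose homogeneous component in the degree of $ab$ is trivial, so that the surviving product is really nonzero in $R/I$ and $R/I$ has the right dimension; this is exactly where quadratic codimension growth is used, forcing all degree-three proper evaluations, and thus all triple products of radical elements of the relevant types, to vanish. Once this is settled for item (1), items (2) and (3) follow verbatim, using the pair $(1^+,1^-)$ with targets $\mathcal{G}_{2,\gamma}^{gr},W_{\eta_2}^{gr}$ and the pair $(0^-,1^-)$ with targets $\mathcal{G}_{2,\tau}^{gri},W_{\eta_3}^{gri}$, together with the explicit $T_*$-ideals, codimensions, and cocharacters recorded in Lemma~\ref{codim_G2_sum_W}.
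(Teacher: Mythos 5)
Your proposal is correct, but the ``only if'' direction follows a genuinely different route from the paper. The paper proves $\mathcal{G}_{2,\gamma}^{gri}\oplus W_{\eta_1}^{gri}\in\textnormal{var}^*(A)$ by comparing $T_*$-ideals degree by degree: using the explicit generators of $\textnormal{Id}^*(\mathcal{G}_{2,\gamma}^{gri}\oplus W_{\eta_1}^{gri})$ from Lemma~\ref{codim_G2_sum_W}, it shows that every multihomogeneous $f\in\Gamma_n^*\cap\textnormal{Id}^*(A)$ for $n\le 2$ already lies in $\textnormal{Id}^*(\mathcal{G}_{2,\gamma}^{gri}\oplus W_{\eta_1}^{gri})$ (the only possible exception being a combination $\alpha x_{1,0}^-x_{2,1}^+ +\beta[x_{1,0}^-,x_{2,1}^+]$, which is excluded by $m=2$), and then invokes generation of $T_*$-ideals by proper multilinear polynomials. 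You instead realize each summand separately as a subquotient of $A$: since $m=2$ forces both $x_{1,0}^-\circ x_{2,1}^+$ and $[x_{1,0}^-,x_{2,1}^+]$ to be non-identities, you pick witnessing radical elements and kill the complementary homogeneous part of the product by hand ($[a,b]$ for $W_{\eta_1}^{gri}$, $a'\circ b'$ for $\mathcal{G}_{2,\gamma}^{gri}$), then use closure of $\textnormal{var}^*$ under direct sums. The point you flag as delicate does go through: the generators $a^2\in R_0^+$, $b^2\in R_0^-$, $[a,b]\in R_1^+$ span the whole ideal $I$ (all longer products vanish by $\Gamma_n^*\subseteq\textnormal{Id}^*(A)$ for $n\ge 3$), and since $a\circ b$ lies in the component $R_1^-$, disjoint from those of the generators, $\overline{ab}=\tfrac12\overline{a\circ b}\neq 0$ exactly when $a\circ b\neq 0$; the symmetric check works for $[a',b']\in R_1^+$ against generators of types $0^+,0^-,1^-$. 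Your approach buys a more structural, constructive statement (explicit copies of both minimal algebras inside $A$, extending the dichotomy of Lemma~\ref{lemmaG_2,W}) and avoids using the full $T_*$-ideal of the direct sum in the forward direction; the paper's argument is shorter once Lemma~\ref{codim_G2_sum_W} is in hand and dovetails with the identical mechanism used in the preceding lemmas. The ``if'' direction is the same in both (Remark~\ref{remark}, Table~\ref{table_sum}, and the bound \eqref{mult-0or1or2}).
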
 
\begin{proof}
   
Assume that $m_{((1)_{0^-},(1)_{1^+})}=2$. Since both $A$ and
$\mathcal{G}_{2,\gamma}^{gri} \oplus W_{\eta_1}^{gri}$ are unitary $*$-algebras
with quadratic codimension growth, it follows that
\[ \Gamma_n^*=
\Gamma_n^* \cap \textnormal{Id}^*(A)
=
\Gamma_n^* \cap \textnormal{Id}^*(\mathcal{G}_{2,\gamma}^{gri} \oplus W_{\eta_1}^{gri}),
\quad \text{for all } n \geq 3.
\]

Moreover, by Proposition~\ref{prop:non-zero_multiplicities}, there are no linear combinations of the polynomials
$[x_{1,0}^-,x_{2,1}^+]$ and $x_{1,0}^-x_{2,1}^+$ resulting in an identity of $A$. In particular, $x_{1,0}^-\circ x_{2,1}^+ \notin \textnormal{Id}^*(A)$. Consequently, the skew homogeneous component of degree $1$ of $A$ is non-trivial and so we obtain  $x_{1,0}^-,x_{1,1}^+, x_{1,1}^-\notin \textnormal{Id}^*(A)$. Therefore, $A$ has no identity of degree $1$. 

Let $f \in \Gamma_2^* \cap \textnormal{Id}^*(A)$ be a multihomogeneous identity. Observing the generators of the $T_*$-ideal of $\mathcal{G}_{2,\gamma}^{gri} \oplus W_{\eta_1}^{gri}$ given in
Lemma~\ref{codim_G2_sum_W}, we have either
\[
f \in \textnormal{Id}^*(\mathcal{G}_{2,\gamma}^{gri} \oplus W_{\eta_1}^{gri})
\quad \text{or} \quad
f = \alpha x_{1,0}^-x_{1,1}^+ + \beta [x_{1,0}^-,x_{1,1}^+],
\]
for some $\alpha,\beta \in F$. In the latter case, as observed above, there are no linear combinations
of the polynomials $x_{1,0}^-x_{1,1}^+$ and $[x_{1,0}^-,x_{1,1}^+]$ resulting in an identity of $A$, which leads to a contradiction. Therefore, this case cannot occur. Consequently, we conclude that
\[
\Gamma_n^* \cap \textnormal{Id}^*(A)
\subseteq
\Gamma_n^* \cap \textnormal{Id}^*(\mathcal{G}_{2,\gamma}^{gri} \oplus W_{\eta_1}^{gri}),
\] for all $n$, which completes the proof.

Items~(2) and~(3) follow by analogous arguments. 

The converse statements in each
case are obtained from Remark~\ref{remark} and Table~\ref{table_sum}.

\end{proof}

Before we present the main result of this section, we define $\mathcal{I}$ as the following set of $*$-algebras:

\[
\begin{aligned}
\mathcal{I} = \{&F,\
C_2^{gr},\ C_{2,*},\ C_{2,*}^{gr},
C_{3,i_1}^{gr},\ C_{3,i_2},\ C_{3,i_3}^{gr},  U_{3,*},\ U_{3,*}^{gri},\ N_{3,*},\ N_{3,*}^{gri}, \\
& \mathcal{G}_{2,\psi}^{gr},\ \mathcal{G}_{2,\tau},\ \mathcal{G}_{2,\tau}^{gr},\ \mathcal{G}_{2,\tau}^{gri},  \mathcal{G}_{2,\gamma}^{gr},\ \mathcal{G}_{2,\gamma}^{gri}, 
 W_{\eta_1}^{gri},\ W_{\eta_2}^{gr},\ W_{\eta_3}^{gri}
\,\}.
\end{aligned}
\]

Also, we consider
$$\widetilde{\mathcal{I}}=\mathcal{I}-\{ F,\ C_2^{gr},\
C_{2,*},\
C_{2,*}^{gr}\}.$$

\begin{theorem}\label{Theorem_final}
    Let $A$ be a unitary $*$-algebra over a field $F$ of characteristic zero with quadratic codimension growth. Then $A$ is $T_*$-equivalent to a finite direct sum of $*$-algebras in the set $\mathcal{I}$ with at least one direct summand belonging to $\widetilde{\mathcal{I}}$. 
\end{theorem}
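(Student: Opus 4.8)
The plan is to realize $A$, up to $T_*$-equivalence, as a direct sum of the building blocks catalogued in $\mathcal{I}$, allotting one summand to each nonzero entry of its proper cocharacter. First I would fix a convenient generator: since $A$ is unitary with quadratic growth, Proposition~\ref{f+j} together with the finite-dimensional reduction of \cite{GIL} lets me assume $A$ is a finite-dimensional unitary $*$-algebra, so that Lemmas~\ref{lemmaC_2}, \ref{lemmaC_3}, \ref{lemmaU_3,G_2}, \ref{lemmaN_3,U_3}, \ref{lemmaG_2,W} and~\ref{lemma_mult2} apply. By Proposition~\ref{Gammancontidoemid} the hypothesis of quadratic growth forces $\Gamma_3^*\subseteq\textnormal{Id}^*(A)$, so every proper cocharacter of $A$ in degree $\ge 3$ vanishes and the entire $T_*$-class of $A$ is encoded in the degree-$1$ and degree-$2$ multiplicities $m_\lambda(A)$, which by \eqref{mult-0or1or2} lie in $\{0,1\}$ except for the three mixed multipartitions $e_1=((1)_{0^-},(1)_{1^+})$, $e_2=((1)_{1^+},(1)_{1^-})$, $e_3=((1)_{0^-},(1)_{1^-})$, where the value $2$ is also possible.

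Next I would assemble a candidate algebra $B$. For every $\lambda$ with $m_\lambda(A)\ne 0$ I adjoin to a direct sum the block furnished by the relevant lemma: $C_{2,*},C_2^{\mathrm{gr}},C_{2,*}^{\mathrm{gr}}$ for the degree-$1$ slots (Lemma~\ref{lemmaC_2}); $C_{3,i_2},C_{3,i_1}^{\mathrm{gr}},C_{3,i_3}^{\mathrm{gr}}$ for the $((2)_{t^\epsilon})$ slots (Lemma~\ref{lemmaC_3}); $U_{3,*},\mathcal{G}_{2,\tau},\mathcal{G}_{2,\psi}^{\mathrm{gr}},\mathcal{G}_{2,\tau}^{\mathrm{gr}}$ for the $((1,1)_{t^\epsilon})$ slots (Lemma~\ref{lemmaU_3,G_2}); $N_{3,*},U_{3,*}^{\mathrm{gri}},N_{3,*}^{\mathrm{gri}}$ for the $((1)_{0^+},(1)_{t^\epsilon})$ slots (Lemma~\ref{lemmaN_3,U_3}); and for each mixed $e_i$ the single member of the designated pair $\{\mathcal{G}_2,W\}$ lying in $\textnormal{var}^*(A)$ when $m_{e_i}(A)=1$, or both members when $m_{e_i}(A)=2$ (Lemmas~\ref{lemmaG_2,W} and~\ref{lemma_mult2}). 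Every chosen summand lies in $\textnormal{var}^*(A)$, and since $\textnormal{var}^*(A)$ is closed under finite direct sums we get $B\in\textnormal{var}^*(A)$, whence $\textnormal{Id}^*(A)\subseteq\textnormal{Id}^*(B)$ and, by Remark~\ref{remark}, $m_\lambda(B)\le m_\lambda(A)$ for all $\lambda$. As each block satisfies $\Gamma_3^*\subseteq\textnormal{Id}^*(\cdot)$, so does $B$, and the codimension of $B$ is likewise governed only by its degree-$\le 2$ multiplicities.

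The heart of the argument is the reverse inequality $m_\lambda(B)\ge m_\lambda(A)$, which I would verify multipartition by multipartition, using two facts: adjoining summands never lowers a multiplicity (the associated $T_*$-ideal only shrinks), and by Remark~\ref{remark} no chosen block ever contributes beyond $m_\lambda(A)$. For the single-highest-weight-vector multipartitions the space is one-dimensional, so multiplicities combine by maximum and the one block realizing $m_\lambda(A)=1$ already forces $m_\lambda(B)=1$. For the mixed $e_1,e_2,e_3$ I would read off Tables~\ref{tableG_2}, \ref{table_W} and~\ref{table_sum} that the only blocks touching $e_i$ are the designated $\mathcal{G}_2/W$ pair; thus including one member yields $m_{e_i}(B)=1$, while including both yields $m_{e_i}(B)\ge m_{e_i}(\mathcal{G}_2\oplus W)=2$. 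Combining both bounds gives $m_\lambda(B)=m_\lambda(A)$ for every $\lambda$, hence $c_n^*(A)=c_n^*(B)$ for all $n$ by \eqref{codimension}, \eqref{proper_codimension} and \eqref{cocharacter}; together with $\textnormal{Id}^*(A)\subseteq\textnormal{Id}^*(B)$ this upgrades the containment to $\textnormal{Id}^*(A)=\textnormal{Id}^*(B)$, i.e. $A\sim_{T_*}B$. Finally, quadratic growth gives $\gamma_2^*(A)\ne 0$, so some degree-$2$ multiplicity is nonzero and its block is one of the $C_3$, $U_3$, $N_3$, $\mathcal{G}_2$ or $W$ algebras, all of which belong to $\widetilde{\mathcal{I}}$; hence $B$ has a component in $\widetilde{\mathcal{I}}$.

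The main obstacle I anticipate is the bookkeeping behind this reverse inequality. A single block feeds several multipartitions at once -- for instance $\mathcal{G}_{2,\gamma}^{\mathrm{gri}}$ contributes to $e_1$ as well as to the degree-$1$ slots $((1)_{0^-})$ and $((1)_{1^+})$ -- so I must check that all such side contributions stay consistent with $A$, which is precisely what Remark~\ref{remark} secures, and, more delicately, that no block outside the designated pair ever contributes to a mixed $e_i$, so that the distinction between the values $1$ and $2$ is controlled solely by how many members of the pair are adjoined. Establishing this disjointness directly from the cocharacter tables is the technical crux on which the exact matching of codimensions, and thus the whole classification, rests.
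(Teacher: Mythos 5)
Your construction of $B$ and the multiplicity-matching argument coincide with the paper's: for each nonzero $m_\lambda$ you adjoin the block supplied by Lemmas~\ref{lemmaC_2}--\ref{lemma_mult2}, obtain $m_\lambda(B)\le m_\lambda(A)$ from Remark~\ref{remark} applied to $B\in\textnormal{var}^*(A)$ and $m_\lambda(B)\ge m_\lambda(A)$ from the same remark applied to each block sitting inside $\textnormal{var}^*(B)$, and then upgrade $c_n^*(A)=c_n^*(B)$ together with $\textnormal{Id}^*(A)\subseteq\textnormal{Id}^*(B)$ to $T_*$-equivalence; the final observation that quadratic growth forces a degree-two multiplicity to be nonzero, hence a summand from $\widetilde{\mathcal{I}}$, is also the paper's. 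One point you flag as the ``technical crux'' is in fact a non-issue: you do not need to know that only the designated $\mathcal{G}_2/W$ pair contributes to a mixed multipartition $e_i$, because the upper bound $m_{e_i}(B)\le m_{e_i}(A)$ from Remark~\ref{remark} already caps the multiplicity regardless of how many blocks touch $e_i$; only the lower bound coming from the adjoined pair matters.

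The genuine gap is in your opening reduction. The lemmas of Section~4 are stated and proved under the standing hypothesis, fixed in the paragraph preceding Lemma~\ref{lemmaC_2}, that $A$ is of the form $F+J(A)$; their proofs repeatedly select elements of $J(A)$ in prescribed homogeneous components and build $*$-subalgebras on them. You apply these lemmas to an arbitrary finite-dimensional unitary $*$-algebra of quadratic growth, which need not have one-dimensional semisimple part (a direct sum of algebras of the form $F+J$ is unitary but has semisimple part $F\oplus\cdots\oplus F$). The paper closes this hole by first decomposing $A\sim_{T_*}A_1\oplus\cdots\oplus A_m$ via Proposition~\ref{f+j}, applying the cocharacter lemmas only to the quadratic-growth components of the form $F+J(A_i)$ (made unitary via Proposition~\ref{unitary_ccg}), disposing of the components of at most linear growth by the classification of \cite{Ioppololamat}, and finally using Proposition~\ref{unitary_ccg} again to show the residual nilpotent summand is zero. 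Your shortcut is repairable --- either insert this decomposition, or verify that for a finite-dimensional $*$-algebra of polynomial growth the homogeneous components $A_0^-$, $A_1^+$, $A_1^-$ all lie in $J(A)$ so that the lemma proofs extend verbatim --- but as written the step where the lemmas are invoked is outside their stated hypotheses.
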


\begin{proof}

Since $A$ has polynomial growth, by~\cite[Corollary 5.1 and Theorem 5.3]{GIL}, we may assume that $A$ is a finite-dimensional $*$-algebra. Moreover, by Theorem~\ref{f+j}, the algebra $A$ is $T_*$-equivalent to a finite direct sum
\[
A_1 \oplus \cdots \oplus A_m,
\]
where each $A_i$ is either nilpotent or of the form $F+J(A_i)$. Since $A$ has quadratic codimension growth, there exists an index $i\in\{1,\ldots,m\}$ such that $A_i$ has quadratic codimension growth and is of the form $F+J(A_i)$. Fix such an index $i$. By Proposition~\ref{unitary_ccg}, we may further assume that $A_i$ is a unitary $*$-algebra. In this case, we have
\[
\Gamma_n^* \subseteq \textnormal{Id}^*(A_i), \qquad \text{for all } n\geq 3.
\]

We now analyze the multiplicities of the proper $(n_1,\ldots,n_4)$-cocharacters $\pi_{n_1,\ldots,n_4}(A_i)$ for $n=n_1+\cdots+n_4$ with $n\in \{1,2\}$, which satisfy relation~\eqref{mult-0or1or2}.  

For $n=1$, by Lemma~\ref{lemmaC_2}, the condition $m_{((1)_{0^-})}\neq 0$ implies that $C_{2,{*}}\in\textnormal{var}^*(A_i)$. Similarly, if $m_{((1)_{1^+})}\neq 0$, then $C_2^{gr}\in\textnormal{var}^*(A_i)$, while the condition $m_{((1)_{1^-})}\neq 0$ yields $C_{2,*}^{gr}\in\textnormal{var}^*(A_i)$. 

We now consider the case $n=2$ and analyze the multiplicities associated with
the partitions of $2$. For partitions of the form $((2)_{s^\epsilon})$, the only
multiplicities that may be non-zero are
$m_{((2)_{0^-})}$, $m_{((2)_{1^+})}$, and $m_{((2)_{1^-})}$. In these cases,
Lemma~\ref{lemmaC_3} ensures, respectively, that
$C_{3,i_2}$, $C_{3,i_1}^{gr}$, or $C_{3,i_3}^{gr}$ belongs to
$\var^*(A_i)$.

Next, consider multiplicities corresponding to partitions of type
$((1,1)_{s^\epsilon})$. By Lemma~\ref{lemmaU_3,G_2}, if
$m_{((1,1)_{0^+})}\neq 0$, then $U_{3,*}\in\var^*(A_i)$; if
$m_{((1,1)_{0^-})}\neq 0$, then $\mathcal{G}_{2,\tau}\in\var^*(A_i)$; if
$m_{((1,1)_{1^+})}\neq 0$, then $\mathcal{G}_{2,\psi}^{gr}\in\var^*(A_i)$; and
finally, if $m_{((1,1)_{1^-})}\neq 0$, then
$\mathcal{G}_{2,\tau}^{gr}\in\var^*(A_i)$.

We now turn to partitions of the form $((1)_{0^+},(1)_{s^\epsilon})$. In this
situation, the only possible non-zero multiplicities are
$m_{((1)_{0^+},(1)_{1^+})}$, $m_{((1)_{0^+},(1)_{0^-})}$, and
$m_{((1)_{0^+},(1)_{1^-})}$. According to Lemma~\ref{lemmaN_3,U_3}, these cases
imply, respectively, that $U_{3,*}^{gri}$, $N_{3,*}$ or
$N_{3,*}^{gri}$ belongs to $\var^*(A_i)$.

The remaining possibilities correspond to the multiplicities
$m_{((1)_{0^-},(1)_{1^+})}$, $m_{((1)_{1^+},(1)_{1^-})}$, and
$m_{((1)_{0^-},(1)_{1^-})}$, each of which may take the values $0$, $1$, or $2$.
If any of these multiplicities is equal to $1$, Lemma~\ref{lemmaG_2,W} implies
that, respectively,
\[
\mathcal{G}_{2,\gamma}^{gri} \ \text{or}\ W_{\eta_1}^{gri}, \qquad
\mathcal{G}_{2,\gamma}^{gr} \ \text{or}\ W_{\eta_2}^{gr}, \qquad
\mathcal{G}_{2,\tau}^{gri} \ \text{or}\ W_{\eta_3}^{gri}
\]
belongs to $\var^*(A_i)$.

If one of the above multiplicities is equal to $2$, then
Lemma~\ref{lemma_mult2} shows that, in the corresponding cases,
\[
\mathcal{G}_{2,\gamma}^{gri}\oplus W_{\eta_1}^{gri}, \qquad
\mathcal{G}_{2,\gamma}^{gr}\oplus W_{\eta_2}^{gr}, \qquad
\mathcal{G}_{2,\tau}^{gri}\oplus W_{\eta_3}^{gri}
\]
belongs to $\var^*(A_i)$.

For each non-zero value of the multiplicity $m_\lambda$, where $\lambda=(\lambda_1,\ldots,\lambda_4)\vdash(n_1,\ldots,n_4)$ with $n=n_1+\cdots+n_4$ and $n\in \{1,2\}$, we denote by $A_{i,m_\lambda}$ the corresponding $*$-algebra belonging to $\textnormal{var}^*(A_i)$, as listed above. Let
\[
B=\bigoplus_{m_\lambda\neq 0} A_{i,m_\lambda}
\]
be the direct sum of all such algebras. Clearly, by the discussion above, we have $B\in\textnormal{var}^*(A_i)$. Moreover, by Remark~\ref{remark} and Tables~\ref{tabela_U3-N3}~-~\ref{table_sum},
the algebras $A_i$ and $B$ have the same multiplicities $m_\lambda$ in the decomposition of all proper $(n_1,\ldots,n_4)$-cocharacters. Hence, $c_n^*(A_i)=c_n^*(B)$, for all $n$, and consequently $A_i\sim_{T_*}B$.

Since $A_i$ has quadratic codimension growth, at least one multiplicity associated to a partition of $2$ must be non-zero. This proves that at least one $*$-algebra in $\widetilde{\mathcal{I}}$ appears as a direct summand of $B$ and therefore of $A$.

We apply the preceding argument to every component $A_i$ having quadratic codimension growth.

Finally, if some $*$-algebra $A_k$ has at most linear codimension growth, for some $1\leq k\leq m$, then by~\cite[Theorem~7.2]{Ioppololamat}, $A_k$ is $T_*$-equivalent to a finite direct sum of the $*$-algebras
\[
N,\; F,\; C_2^{gr},\; C_{2,*},\; C_{2,*}^{gr}.
\]

Finally, recall that $A \sim_{T_*} A_1 \oplus \cdots \oplus A_m$. After reordering the $*$-algebras $A_i$, we may assume that
\[
A \sim_{T_*} A_1 \oplus \cdots \oplus A_k \oplus N,
\]
where each $A_i$ is a finite direct sum of algebras belonging to
$\mathcal{I}$ and $N$ is a nilpotent $*$-algebra. Since
$A_1 \oplus \cdots \oplus A_k \oplus N$ has quadratic codimension growth,
Proposition~\ref{unitary_ccg} implies that it is unitary. Consequently,
$N=\{0\}$, which completes the proof.

\black 

\end{proof}

Finally, as a consequence of Theorem \ref{Theorem_final} we provide the classification of the minimal unitary $*$-varieties with quadratic codimension growth.

\begin{corollary}
     Let $A$ be a unitary $*$-algebra with quadratic codimension growth. Then $A$ generates a minimal variety if and only if $A$ is $T_*$-equivalent to some algebra $B\in \widetilde{\mathcal{I}}$.
\end{corollary}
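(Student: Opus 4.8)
The plan is to prove both directions of the equivalence, relying heavily on the structural characterization provided by Theorem~\ref{Theorem_final} together with the codimension data compiled in Tables~\ref{tabela_U3-N3}~--~\ref{table_sum}. First I would establish the easy direction: if $A \sim_{T_*} B$ for some $B \in \widetilde{\mathcal{I}}$, then $A$ generates a minimal variety. This follows immediately because every $*$-algebra in $\widetilde{\mathcal{I}}$ already appears among those treated in Proposition~\ref{cor_W_minimal} and the proposition preceding it (the algebras of $\mathcal{L}_1$) --- indeed, by inspecting the definitions of $\mathcal{I}$, $\widetilde{\mathcal{I}}$, $\mathcal{L}_1$ and $\mathcal{L}_2$, one checks that $\widetilde{\mathcal{I}} = \mathcal{L}_1 \cup \mathcal{L}_2$, and both collections were shown to generate minimal $*$-varieties of quadratic codimension growth. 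Since minimality is a $T_*$-invariant property, $A$ itself generates a minimal variety.

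For the converse, suppose that $A$ is unitary with quadratic codimension growth and generates a minimal variety. By Theorem~\ref{Theorem_final}, $A$ is $T_*$-equivalent to a finite direct sum
\[
A \sim_{T_*} \bigoplus_{m_\lambda \neq 0} A_{i,m_\lambda},
\]
where each summand belongs to $\mathcal{I}$ and at least one summand belongs to $\widetilde{\mathcal{I}}$. The key observation is that each summand $B$ in this decomposition satisfies $B \in \textnormal{var}^*(A)$, so $\textnormal{var}^*(B) \subseteq \textnormal{var}^*(A)$. I would then argue that the summands lying in $\widetilde{\mathcal{I}}$ each generate a variety of quadratic growth, whereas the summands in $\mathcal{I} \setminus \widetilde{\mathcal{I}} = \{F, C_2^{gr}, C_{2,*}, C_{2,*}^{gr}\}$ each have at most linear codimension growth (by the codimension formulas $c_n^*(F)=1$ and $c_n^*(B_1)=1+n$ from Lemma~\ref{codim-C_2-C_3}).

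The heart of the argument is a minimality reduction: since $A$ generates a minimal variety of quadratic growth, every \emph{proper} subvariety must have growth $n^t$ with $t < 2$, i.e.\ at most linear growth. I would use this to show that the decomposition cannot contain two distinct summands each contributing quadratic growth, nor a quadratic summand together with a linear one that strictly enlarges the variety. Concretely, suppose $B \in \widetilde{\mathcal{I}}$ is one of the summands; then $\textnormal{var}^*(B) \subseteq \textnormal{var}^*(A)$ and $B$ has quadratic growth. If this inclusion were proper, then $\textnormal{var}^*(B)$ would be a proper subvariety of quadratic growth, contradicting minimality. Hence $\textnormal{var}^*(B) = \textnormal{var}^*(A)$, which gives $A \sim_{T_*} B$ with $B \in \widetilde{\mathcal{I}}$, as desired. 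To make this rigorous I must verify that $\textnormal{var}^*(B) \subseteq \textnormal{var}^*(A)$ is indeed an equality of varieties and not merely an inequality of codimensions; here I would invoke Remark~\ref{remark}, which guarantees $\widetilde m_{\lambda} \leq m_\lambda$ for all multipartitions, and compare with the codimension tables to conclude that the cocharacters of $B$ already account for all nonzero multiplicities of $A$.

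The main obstacle I anticipate lies in handling the possibility that $A$'s cocharacter decomposition involves several distinct nonzero multiplicities simultaneously, so that the direct sum $\bigoplus A_{i,m_\lambda}$ genuinely has more than one summand in $\widetilde{\mathcal{I}}$. In that situation the total codimension of $A$ would strictly exceed that of any single summand, and I must show this forces a proper subvariety of quadratic growth, violating minimality. The delicate point is that the multiplicity bound $0 \leq m_{((1)_{s^\delta},(1)_{t^\epsilon})} \leq 2$ from~\eqref{mult-0or1or2} allows a single multipartition to contribute growth equivalent to a \emph{direct sum} $\mathcal{G} \oplus W$ (handled in Lemma~\ref{lemma_mult2}); I would need to confirm that such a direct-sum algebra, although a sum of two minimal-variety generators, does \emph{not} itself generate a minimal variety, since it properly contains each of $\textnormal{var}^*(\mathcal{G})$ and $\textnormal{var}^*(W)$ as subvarieties of quadratic growth. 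Thus the minimal case is precisely the one where exactly one multiplicity of a partition of $2$ equals $1$ and all others vanish, pinning down $A \sim_{T_*} B$ for a unique $B \in \widetilde{\mathcal{I}}$.
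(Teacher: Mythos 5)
Your proof is correct and follows the argument the paper leaves implicit when it states this corollary as a consequence of Theorem~\ref{Theorem_final}: the forward direction is exactly the observation that $\widetilde{\mathcal{I}}=\mathcal{L}_1\cup\mathcal{L}_2$ combined with Proposition~\ref{cor_W_minimal} and the proposition on $\mathcal{L}_1$, and the converse is the intended application of Theorem~\ref{Theorem_final} together with the definition of minimality, which forces $\textnormal{var}^*(B)=\textnormal{var}^*(A)$ for any direct summand $B\in\widetilde{\mathcal{I}}$ since such a $B$ already has quadratic growth. The extended case analysis in your final paragraph is unnecessary once that two-line minimality argument is in place, but it does no harm.
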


In \cite{Ioppololamat}, the authors provided a complete classification of the varieties generated by $*$-algebras with at most linear codimension growth. In fact, they proved that any such variety is generated by a finite direct sum of $*$-algebras generating minimal varieties, each one having at most linear growth. In the previous theorem, we provided a complete classification of unitary $*$-algebras generating varieties of quadratic codimension growth. As a consequence of the previous results, we have the following. 

\begin{corollary}
    Let $A$ be a unitary $*$-algebra. Then $c_n ^*(A)\leq \alpha n^2$ if and only if $A$ is $T_*$-equivalent to a finite direct sum of algebras generating minimal varieties with at most quadratic codimension growth. 
\end{corollary}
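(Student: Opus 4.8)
The plan is to prove the two implications separately, treating sufficiency as routine and concentrating the real work on necessity through the polynomial growth dichotomy.

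For the sufficiency direction, suppose $A\sim_{T_*}B_1\oplus\cdots\oplus B_r$, where each $B_j$ generates a minimal $*$-variety of at most quadratic codimension growth. Since $\textnormal{Id}^*(B_1\oplus\cdots\oplus B_r)=\bigcap_{j}\textnormal{Id}^*(B_j)$, the diagonal map induces an injection $P_n^*/(P_n^*\cap\textnormal{Id}^*(A))\hookrightarrow\bigoplus_{j}P_n^*/(P_n^*\cap\textnormal{Id}^*(B_j))$, whence $c_n^*(A)\leq\sum_{j=1}^r c_n^*(B_j)$. By hypothesis each $c_n^*(B_j)$ is eventually a polynomial in $n$ of degree at most $2$, so the finite sum is bounded by $\alpha n^2$ for a suitable constant $\alpha$, as required.

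For the necessity direction, assume $c_n^*(A)\leq\alpha n^2$. As the sequence is polynomially bounded, $A$ has polynomial codimension growth, and Proposition~\ref{Gammancontidoemid} gives $c_n^*(A)=\sum_{i=0}^k\binom{n}{i}\gamma_i^*(A)$ with $\gamma_k^*(A)\neq0$; the bound forces $k\leq 2$. If $k\leq 1$, then $A$ has at most linear codimension growth and, by \cite[Theorem~7.2]{Ioppololamat}, $A$ is $T_*$-equivalent to a finite direct sum of copies of $F$, $C_2^{gr}$, $C_{2,*}$ and $C_{2,*}^{gr}$, each generating a minimal variety of at most linear growth; the nilpotent summand produced by that theorem is discarded, since the unitarity of $A$ forces it to vanish exactly as in the final step of the proof of Theorem~\ref{Theorem_final}. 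If $k=2$, then $A$ has quadratic codimension growth and Theorem~\ref{Theorem_final} yields $A\sim_{T_*}\bigoplus_i A_i$ with every summand $A_i\in\mathcal{I}$. It then suffices to observe that each algebra in $\mathcal{I}$ generates a minimal variety of at most quadratic growth: those in $\widetilde{\mathcal{I}}$ do so by Proposition~\ref{cor_W_minimal} together with the preceding corollary, while $F$, $C_2^{gr}$, $C_{2,*}$ and $C_{2,*}^{gr}$ generate minimal varieties of constant or linear growth. This produces the desired decomposition.

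The main obstacle is organizational rather than computational: one must correctly reduce to the regime $k\leq 2$ and then quote the linear classification of \cite{Ioppololamat} and the quadratic classification of Theorem~\ref{Theorem_final} in a way that guarantees every summand genuinely generates a minimal variety, while ensuring via unitarity that no nilpotent summand survives. Once these structural inputs are assembled, the argument reduces to the bookkeeping indicated above.
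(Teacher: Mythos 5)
Your proof is correct and takes essentially the same route the paper intends: the paper states this corollary without proof ("as a consequence of the previous results"), and the implicit argument is exactly your combination of the reduction to $k\leq 2$ via Proposition~\ref{Gammancontidoemid}, the linear classification of \cite{Ioppololamat} for $k\leq 1$, and Theorem~\ref{Theorem_final} together with the minimality results for $k=2$. Your write-up simply makes explicit the bookkeeping (subadditivity of codimensions for the sufficiency direction, discarding the nilpotent summand via unitarity) that the paper leaves to the reader.
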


\vspace{0.5cm}

\end{document}